\documentclass[12pt]{amsart}



\usepackage{amssymb}

\usepackage{enumitem}

\usepackage{graphicx}
\usepackage{color}

\usepackage{mathrsfs}

\usepackage{comment}

\usepackage{url}

\newcommand{\R}{{\mathbb R}}

\newcommand{\N}{{\mathbb N}}

\newcommand{\diam}{{\textnormal{diam}}}
\newcommand{\cl}{{\textnormal{cl}}}
\newcommand{\interior}{{\textnormal{int}}}

\makeatletter
\@namedef{subjclassname@2020}{%
  \textup{2020} Mathematics Subject Classification}

\usepackage[T1]{fontenc}


\newtheorem{theorem}{Theorem}[section]
\newtheorem{corollary}[theorem]{Corollary}
\newtheorem{lemma}[theorem]{Lemma}
\newtheorem{proposition}[theorem]{Proposition}



\theoremstyle{definition}
\newtheorem{definition}[theorem]{Definition}
\newtheorem{remark}[theorem]{Remark}



\numberwithin{equation}{section}


\frenchspacing

\textwidth=13.5cm
\textheight=23cm
\parindent=16pt
\oddsidemargin=-0.5cm
\evensidemargin=-0.5cm
\topmargin=-0.5cm





\begin{document}


\baselineskip=17pt


\title[Continuum Many Planar Embeddings of the Knaster accordion]{Continuum Many Planar Embeddings of the Knaster accordion}

\author[Joseph S. Ozbolt]{Joseph S. Ozbolt}
\thanks{This paper comprises the results of the author's dissertation, written under the direction of Piotr Minc and presented to the faculty of Auburn University in partial fulfillment of the requirements for the degree of Doctor of Philosophy, August 2020.}
\address{Joseph S. Ozbolt \\ COLSA Corporation\\ 
6728 Odyssey Dr NW\\
Huntsville, Alabama 35806}
\email{joesozbolt@gmail.com}

\date{}

\begin{abstract}
A\v nusi\'c, Bruin, and \v Cin\v c have asked in \cite{Anusic2} which hereditarily decomposable chainable continua (HDCC) have uncountably many mutually inequivalent planar embeddings. It was noted, as per the embedding technique of John C. Mayer with the $\sin(1/x)$-curve \cite{Mayer}, that any HDCC which is the compactification of a ray with an arc likely has this property. Here, we show two methods for constructing $\mathfrak{c}$-many mutually inequivalent planar embeddings of the classic Knaster $V \Lambda$-continuum, $K$, also referred to as the Knaster accordion. The first of these two methods produces $\mathfrak{c}$-many planar embeddings of $K$, all of whose images have a different set of accessible points from the image of the standard embedding of $K$, while the second method produces $\mathfrak{c}$-many embeddings of $K$, each of which preserve the accessibility of all points in the standard embedding.
\end{abstract}

\subjclass[2020]{Primary 54F50; Secondary 54C25}

\keywords{Continuum, Knaster, Planar Embedding}

\maketitle

\section{Introduction and Preliminaries}  
When referring to a collection as having \textit{continuum many} elements, we mean that its cardinality is the same as that of the set of real numbers, which will be denoted by $\mathfrak{c}$. By \textit{a continuum}, we mean a compact connected metric space. 

Let $X$ be a metric space with metric $d$, and let $A \subset X$. The \textit{diameter} of $A$, denoted $\diam(A)$, is given by $\diam(A) = \sup\{d(x,y) \mid x, y \in A\}.$ A finite collection $\mathcal{C} = \{C_1, C_2, \ldots, C_n\}$ of subsets of $X$ having the property that $C_i \cap C_j \neq \emptyset$ if and only if $|i-j| \leq 1$ is called a \textit{chain} in $X$. By the \textit{mesh} of $\mathcal{C}$, we mean the maximum of the set of diameters of each member $\mathcal{C}$. If $\mathcal{C}$ covers $X$, that is, if $X = \bigcup \mathcal{C}$, we say that $\mathcal{C}$ is a \textit{chain covering} of $X$.

A continuum is \textit{chainable} if it can be covered by a chain covering of open subsets having arbitrarily small mesh. If a continuum can be expressed as the union of two of its proper subcontinua, then it is said to be \textit{decomposable}; otherwise, we say it is \textit{indecomposable}.  If a continuum has the property that each of its nondegenerate subcontinua is decomposable, it is said to be \textit{hereditarily decomposable}. We will mainly concern ourselves with hereditarily decomposable chainable continua (HDCC for both singular and plural).

By \textit{the plane}, we mean here the $xy$-plane, which will sometimes be denoted by $\R^2$, endowed with the usual Euclidean metric, which will be denoted as $d$. Given a planar continuum $X$ and two planar embeddings $\varphi$ and $\psi$ of $X$, we say that $\varphi$ and $\psi$ are \textit{equivalent planar embeddings of $X$} if there exists a homeomorphism of the plane onto itself mapping $\varphi(X)$ onto $\psi(X)$. If no such homeomorphism exists, we say that $\varphi$ and $\psi$ are \textit{inequivalent planar embeddings of $X$}, or simply, are inequivalent for short. If $\Phi$ is a collection of embeddings of $X$ having the property that for each $\varphi$ and $\psi$ in $\Phi$ where $\varphi \neq \psi$, $\varphi$ and $\psi$ are inequivalent, then we say that $\Phi$ is a \textit{collection of mutually inequivalent embeddings of $X$}.

Let $X$ be a continuum in the plane and let $x \in X$. If there exists an arc $A$ in the plane such that $A \cap X = \{x\}$, we say $x$ is \textit{accessible from the complement of $X$}. For short, we may say that $x$ is an accessible point of $X$, or simply, that $x$ is accessible if the continuum $X$ is understood.

If $x$ is an accessible point of $\varphi(X)$ and $A$ an arc contained in the plane having $x$ as an endpoint with $A \cap \varphi(X) = \{x\}$, then $A$ is called an \textit{endcut} of $\varphi(X)$. If $C$ is an arc contained in the plane whose interior is contained in the complement of $\varphi(X)$ and whose endpoints are contained in $\varphi(X)$, then $C$ is called a \textit{crosscut} of $\varphi(X)$.

Planar embeddings of continua have been a subject of inquiry in topology since at least the early twentieth century.  According to R.H. Bing in \cite{Bing}, Theorem 4, every chainable continuum can be embedded in the plane. It is well-known, as one can verify is a consequence of the Jordan-Schoenflies Theorem \cite{Cairns}, that an arc has only one planar embedding up to equivalence. The question of how many mutually inequivalent planar embeddings can be produced for one specific chainable continuum or a particular class of chainable continua has also been of interest. For example, Michel Smith \cite{Smith} and Wayne Lewis \cite{Lewis} have both independently shown that there are uncountably many mutually inequivalent planar embeddings of the pseudo-arc. More recent results include those of Anu\v si\' c, \v Cin\v c, and Bruin in \cite{Anusic1}, \cite{Anusic2}, and \cite{Anusic4}.

The accessibility of points in the images of planar embeddings of continua is of particular interest, of which a well-known problem is that of Nadler and Quinn in \cite{Nadler-Quinn}. It is asked, given a chainable continuum $X$ and a point $x \in X$, if there is a planar embedding $\varphi$ of $X$ such that $\varphi(x)$ is accessible. This question was answered as positive for all HDCC by Minc and Transue in Theorem 6.1 of \cite{Minc Transue}. However, the question remains open for indecomposable continua. A survey of this problem can be found in \cite{Anusic3}. How accessibility of points in the images of planar embeddings relates to inequivalent embeddings can be manifested by the following proposition, which says that if the images of two planar embeddings of a continuum have a different set of accessible points, then the embeddings are inequivalent.

\begin{proposition}\label{different access}
Let $\varphi$ and $\psi$ be planar embeddings of a continuum $X$. If there is an $x \in X$ such that $\varphi(x)$ is accessible while $\psi(x)$ is not, then $\varphi$ and $\psi$ are inequivalent.
\end{proposition}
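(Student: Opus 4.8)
The plan is to argue by contradiction. Suppose $\varphi$ and $\psi$ are equivalent while, for the given $x$, the point $\varphi(x)$ is accessible but $\psi(x)$ is not. The guiding idea is that a homeomorphism of the plane onto itself must carry an endcut of one planar continuum to an endcut of the other, so it cannot convert an accessible point into an inaccessible one.

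Using the assumed equivalence, fix a homeomorphism $h$ of the plane onto itself with $h(\varphi(X)) = \psi(X)$, and, using the accessibility of $\varphi(x)$, fix an arc $A$ in the plane with endpoint $\varphi(x)$ and $A \cap \varphi(X) = \{\varphi(x)\}$. Then $h(A)$ is again an arc in the plane, being the image of an arc under a homeomorphism of $\R^2$, and, since $h$ is injective,
\[
h(A)\cap\psi(X)=h(A)\cap h(\varphi(X))=h\big(A\cap\varphi(X)\big)=\{\,h(\varphi(x))\,\}.
\]
Hence $h(A)$ is an endcut of $\psi(X)$ at the point $h(\varphi(x))$, so $h(\varphi(x))$ is accessible in $\psi(X)$. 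Since the equivalence is realized by $h$, this point is $\psi(x)$, which contradicts the assumption that $\psi(x)$ is not accessible and completes the proof. (Applying the same reasoning to $h^{-1}$ in fact shows that equivalent planar embeddings have exactly the same set of points of accessibility.)

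I do not expect any real obstacle: the argument is short, and the steps involving $h$ are formal consequences of $h$ and $h^{-1}$ being continuous bijections of the plane interchanging $\varphi(X)$ and $\psi(X)$. The single point that deserves a word of justification is that the transported arc $h(A)$ is an endcut ending at $\psi(x)$ — and not at the image of some other point of $X$ — i.e., that $h(\varphi(x)) = \psi(x)$; this is where one uses that $h$ is the homeomorphism realizing the equivalence between $\varphi$ and $\psi$.
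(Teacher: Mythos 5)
Your endcut-transport argument is the right core idea, and since the paper states Proposition \ref{different access} without any proof, it is essentially the argument the author leaves implicit: if $h$ is a homeomorphism of the plane with $h(\varphi(X))=\psi(X)$ and $A$ is an endcut of $\varphi(X)$ at $\varphi(x)$, then $h(A)$ is an endcut of $\psi(X)$ at $h(\varphi(x))$, so $h$ carries accessible points of $\varphi(X)$ to accessible points of $\psi(X)$, and $h^{-1}$ does the reverse. That much is correct, and it is exactly the invariance the paper relies on in Section 2.

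The gap is the final identification $h(\varphi(x))=\psi(x)$, which you flag and then justify by saying that $h$ ``realizes the equivalence.'' Under the paper's definition, equivalence only requires that \emph{some} plane homeomorphism map $\varphi(X)$ onto $\psi(X)$ as a set; there is no requirement that $h$ agree with $\psi\circ\varphi^{-1}$ on $\varphi(X)$ (that stronger condition is what the paper's footnote calls \emph{strong} equivalence). So from $h(\varphi(X))=\psi(X)$ you may conclude that $h(\varphi(x))$ is some accessible point of $\psi(X)$, but not that it is $\psi(x)$. Read literally with the weak definition, the statement can even fail for a general continuum $X$: if $g$ is a self-homeomorphism of $X$, then $\varphi$ and $\varphi\circ g$ have the same image and are equivalent via the identity map of the plane, yet $g$ may move $x$ to a point whose $\varphi$-image has a different accessibility status (for the $\sin(1/x)$-continuum, a half-period shift along the ray gives a self-homeomorphism interchanging the endpoints of the limit arc, while there are embeddings in which only one of those endpoints is accessible). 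What your argument genuinely proves---and what the paper actually needs---is the set-level statement: any plane homeomorphism carrying $\varphi(X)$ onto $\psi(X)$ carries the set of accessible points of $\varphi(X)$ onto the set of accessible points of $\psi(X)$. The pointwise conclusion requires either strong equivalence or additional information pinning down where $h$ sends the relevant points, which is how the paper proceeds later, using Corollary \ref{like layers} and Proposition \ref{preserve order} to force $h(\alpha(Q_k))=\beta(Q_k)$ before accessibility is compared.
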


Thus, given a planar continuum $X$, one can provide a pair of mutually inequivalent planar embeddings $\varphi$ and $\psi$ of $X$ by constructing $\varphi$ and $\psi$ so that at least one point of $X$ is accessible in the image $\varphi$ and inaccessible in the image of $\psi$. In \cite{Mayer}, John C. Mayer constructed a procedure for embedding the $\sin(1/x)$-continuum (shown in Figure \ref{fig:sine}) in uncountably many mutually inequivalent ways so that their images all have the same set of accessible points. This procedure is executed by first developing a \textit{schema} (plural: \textit{schemata}) consisting of \textit{subschemata} (singular: \textit{subschema}) for each sequence of nonnegative integers, followed by manipulating the ray in the $\sin(1/x)$-continuum on each side of its limiting arc according to the rules of the schema.  A somewhat similar procedure can apply to forming uncountably many mutually inequivalent embeddings of other HDCC.  One example in which we demonstrate using such a similar embedding procedure will be with the Knaster $V\Lambda$-Continuum, otherwise known as the \textit{Knaster accordion}, which we will denote as $K$, shown in Figure \ref{fig:Knaster}. This paper will be devoted to $K$, in which we construct two different collections of mutually inequivalent embeddings of $K$ each having cardinality $\mathfrak{c}$. This exhibits a way to produce uncountably many, and in fact $\mathfrak{c}$-many, mutually inequivalent planar embeddings of an HDCC which contains no subcontinuum having a dense ray and which is not path connected and is nowhere locally connected.

\begin{figure}
\centering
\includegraphics[width=12cm]{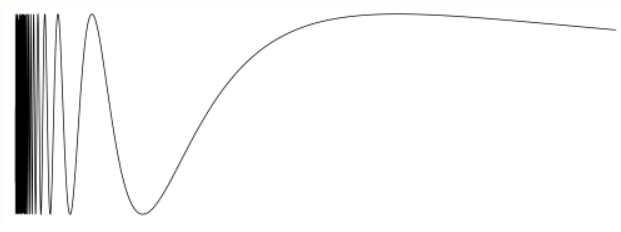}
\caption{The $\sin(1/x)$-continuum, also known as the topologist's sine curve, is an HDCC which is the compactification of a ray with an arc. This continuum is not path connected, nor is it locally connected.}
\label{fig:sine}
\end{figure}

Recently, A\v nusi\'c, Bruin, and \v Cin\v c showed in \cite{Anusic2} that every chainable continuum containing a nondegenerate indecomposable subcontinuum admits uncountably many mutually \textit{strongly} inequivalent planar embeddings.\footnote{Planar embeddings $\varphi$ and $\psi$ of a planar continuum $X$ are strongly equivalent if $\psi \circ \varphi^{-1}: \varphi(X) \to \psi(X)$ can be extended to a homeomorphism of the plane onto itself.} They asked (Question 6 of \cite{Anusic2}) which HDCC (other than an arc) have uncountably many mutually inequivalent planar embeddings. It was noted that Mayer's embedding approach for the $\sin(1/x)$-continuum, as mentioned in the previous paragraph, likely works to show that any continuum which is the compactification of a ray with an arc has uncountably many mutually inequivalent planar embeddings. They further added that Mayer's approach would not generalize to the remaining HDCC since not all have subcontinua with dense rays.

As previously noted, and as made apparent in Figure \ref{fig:Knaster}, $K$ is an HDCC containing no subcontinuum which is the compactification of a ray with an arc. In fact, $K$ contains no subcontinuum containing a dense ray. Furthermore, $K$ is not path connected and is also nowhere locally connected. Note that the $\sin(1/x)$-continuum contains a dense ray, and although it is not path connected, it is not nowhere locally connected. Regardless, we will still be able to give a "Mayer-like" approach to producing uncountably many, and in fact, $\mathfrak{c}$-many, mutually inequivalent planar embeddings of $K$.  Furthermore, we will provide two methods of producing these embeddings---one which produces $\mathfrak{c}$-many mutually inequivalent planar embeddings of $K$, all of whose images have a different set of accessible points from the image of the standard embedding of $K$, and the other producing $\mathfrak{c}$-many mutually inequivalent planar embeddings of $K$ which preserve the accessibility of points accessible in the standard embedding. The collection of embeddings of the former will be constructed in Section 2 and the latter will be constructed in Section 3.

Before we give a geometric construction defining $K$, we must first provide some basic theory and terminology regarding the general structure of all HDCC, most of which is extracted from Kuratowski's theory on the structure of irreducible continua in Chapter V, \S48 of \cite{Kuratowski}. For a given HDCC $X$, there exists a continuous function $g$ mapping $X$ onto $[0,1]$ so that for each $t \in [0,1]$, $g^{-1}(t)$ is a maximal nowhere dense subcontinuum of $X$. Such a function $g$ will be called a \textit{Kuratowski map} of $X$, and the subcontinua $g^{-1}(t)$ will be called the \textit{layers} of $X$. The layers given by $g^{-1}(0)$ and $g^{-1}(1)$ will be called the \textit{left end layer} and \textit{right end layer} of $X$, respectively.  All other layers are called \textit{interior layers} of $X$.

In particular, each HDCC admits an upper semicontinuous decomposition into layers. That is, if $X$ is an HDCC and $g$ a Kuratowski map of $X$, then $g^{-1}$ is an upper semicontinuous set-valued map. The curious reader may refer to section III, \S 2 of \cite{Nadler} for more information on upper semicontinuous decompositions of continua.  Each nondegenerate layer $L$ of an HDCC $X$ is also an HDCC and can itself be decomposed into layers. As can be found in \cite{Minc Transue}, \cite{Mohler}, and \cite{Thomas}, we refer to \textit{generalized layers} of $X$ as those which may be layers of $X$, layers of layers of $X$, etc. More specifically, let $\mathcal{L}_0(X) = \{X\}$. If $\alpha = \beta + 1$, let $\mathcal{L}_\alpha(X)$ consist of the degenerate members as well as the layers of the nondegenerate members of $\mathcal{L}_\beta(X)$. If $\alpha$ is a limit ordinal, let $\mathcal{L}_\alpha(X) = \{\bigcap_{\beta < \alpha}L_\beta \mid L_\beta \in \mathcal{L}_\beta(X)\}$. It was also shown by Thomas in \cite{Thomas} that there exists a least countable ordinal $\sigma_X$ such that every member of $\mathcal{L}_{\sigma_X}(X)$ is degenerate. Thus, the generalized layers of $X$ are any members of the set $\mathscr{L}(X) = \bigcup_{\alpha \leq \sigma_X} \mathcal{L}_\alpha(X)$. Mohler showed in \cite{Mohler} that for every countable ordinal $\alpha$, there exists an HDCC $X_\alpha$ so that every member of $\mathcal{L}_\alpha(X_\alpha)$ is degenerate. Thus, for a given HDCC $X$ and a point $x \in X$, there exists a countable ordinal $\sigma_x$ such that $\sigma_x = \min\{\alpha \mid \{x\} \in \mathcal{L}_\alpha(X)\}$. Note that if $L$ is a nondegenerate generalized layer of $X$, then there exists a unique countable ordinal $\sigma_L > 0$ such that $L \in \mathcal{L}_{\sigma_L}(X)$.  In either case, we say that $\sigma_X$ is the \textit{layer level of $X$}, that $\sigma_x$ is the \textit{layer level of $x$ in $X$} for $x \in X$, and that for a nondegenerate generalized layer $L$ of $X$, $\sigma_L$ is the \textit{layer level of $L$ in $X$}. It becomes self-evident at this point that when referring to a layer of an HDCC, we mean a generalized layer having a layer level of 1. We shall also refer to such layers as \textit{top layers} to avoid any possible ambiguity in a given context.
It is worth noting that generalized layers of $K$ having a layer level more than 1 will not be used in this paper. However, we have defined generalized layers of HDCC since they are relevant in one of the questions of section 4.

A nondegenerate continuum $Y$ is \textit{irreducible between two points} $x, y, \in Y$ if there is no proper subcontinuum of $Y$ containing both $x$ and $y$.  Likewise, $Y$ is \textit{irreducible between two subcontinua} $A_1, A_2 \subset Y$ if there is no proper subcontinuum $Z$ such that $A_1, A_2 \subset Z$. It is well-known (again, see Chapter V, \S 48 of \cite{Kuratowski}) that all HDCC are irreducible between two points. In fact, if $Y$ is an HDCC and $x,y \in Y$ are any two distinct points (subcontinua) of $Y$, then there is a subcontinuum of $Y$ irreducible between $x$ and $y$. 

If $X$ is an HDCC and $x$ and $y$ are distinct points in $X$, then the subcontinuum of $X$ irreducible between $x$ and $y$ will be denoted by $[x,y]$. Likewise, given subcontinua $C_1$ and $C_2$ of $X$, we denote the subcontinuum of $X$ irreducible between $C_1$ and $C_2$ as $[C_1, C_2]$. Also, we have $(C_1, C_2] := [C_1, C_2]\backslash C_1$, $[C_1, C_2) := [C_1, C_2]\backslash C_2$, and $(C_1, C_2) := [C_1, C_2] \backslash (C_1 \cup C_2)$. Note that if $C_1$ and $C_2$ are top layers of $X$ with $x \in C_1$ and $y \in C_2$, then $[x,y] = [C_1, C_2]$, $(x,y] = (C_1, C_2]$, $[x,y) = [C_1, C_2)$, and $(x,y) = (C_1, C_2)$. 

One may note that the interval-like notation provided in the previous paragraph for such irreducible subcontinua implies there is some assigned ordering. In some cases, one can naturally infer the ordering based on a Kuratowski map $g: X \to [0,1]$, in that if $x$ and 
$y$ are distinct points of $X$ such that $g(x) < g(y)$, then the subcontinuum of $X$ irreducible between $x$ and $y$ is $[x,y]$. Likewise, if $C_1$ and $C_2$ are distinct subcontinua of $X$ such that
$$\min\{g(x) \mid x \in C_1\} < \min\{g(x) \mid x \in C_2\},$$
then the subcontinuum of $X$ irreducible between $C_1$ and $C_2$ is $[C_1,C_2]$. For the purposes of this paper, this will always be the case. However, in cases where points $x$ and $y$ or subcontinua $C_1$ and $C_2$ are contained within the same generalized layer, one must more carefully define a consistent ordering on $X$. The reader is referred to \S 3 of \cite{Minc and Transue 2} for a detailed description of such an ordering.

Given an interior layer $L$ of $X$, we define the \textit{left part of $L$} and the \textit{right part of $L$} as 
$$\ell(L) = \cl\Big(g^{-1}\big([0, g(L))\big)\Big) \cap L \ \textnormal{ and } \ r(L) = L \cap \cl\Big(g^{-1}\big((g(L), 1]\big)\Big),$$
respectively.

\begin{definition}\label{layers of cohesion}
Let $X$ be an HDCC, let $g$ be a Kuratowski map of $X$, and let $L$ be a layer of $X$. We say that $L$ is a \textbf{layer of cohesion} if $L$ is an end layer or if
$\ell(L) = L = r(L)$.
\end{definition}

\begin{definition}\label{layers of continuity}
Let $X$ be an HDCC and let $g$ be a Kuratowski map of $X$. We say that a layer $L$ of $X$ is a \textbf{layer of continuity} if the set-valued map $g^{-1}$ is continuous at the point $g(L)$.
\end{definition}

As noted by Kuratowski on page 201 of \cite{Kuratowski}, layers of cohesion need not be layers of continuity, as is the case for the limiting arc of the $\sin(1/x)$-continuum.

\begin{proposition}\label{preserve order}
Let $X$ be a hereditarily decomposable chainable continuum, let $h$ be a homeomorphism of $X$ onto itself, and let $g: X \to [0,1]$ be a Kuratowski map.  Then for every $s, t \in [0,1]$ such that $s \leq t$, we have either $g(h(g^{-1}(s))) \leq g(h(g^{-1}(t)))$ or $g(h(g^{-1}(s))) \geq g(h(g^{-1}(t)))$.
\end{proposition}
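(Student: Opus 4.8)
The plan is to show that precomposing a Kuratowski map with a self-homeomorphism merely reparametrises it, which forces $g(h(g^{-1}(s)))$ to be a single point depending monotonically on $s$. Set $\tilde g := g \circ h$. First I would check that $\tilde g$ is again a Kuratowski map of $X$: it is a continuous surjection onto $[0,1]$, and $\tilde g^{-1}(t) = h^{-1}(g^{-1}(t))$ for each $t$. Since $g^{-1}(t)$ is a maximal nowhere dense subcontinuum of $X$ and the property ``$A$ is a maximal nowhere dense subcontinuum of $X$'' is invariant under self-homeomorphisms of $X$ (apply it with the homeomorphism $h^{-1}$), the set $\tilde g^{-1}(t)$ is again a maximal nowhere dense subcontinuum, so $\tilde g$ is a Kuratowski map.

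Next I would invoke the structure theory of irreducible continua (Kuratowski, Chapter V, \S48 of \cite{Kuratowski}): the layer decomposition of an HDCC is intrinsic, in that the layers of \emph{any} Kuratowski map of $X$ are precisely the maximal nowhere dense subcontinua of $X$. Consequently $g$ and $\tilde g$ induce the same partition of $X$, namely $\{g^{-1}(s) : s \in [0,1]\} = \{\tilde g^{-1}(t) : t \in [0,1]\}$. Since the fibres in each family are nonempty and pairwise disjoint, there is a well-defined bijection $\phi \colon [0,1] \to [0,1]$ determined by $g^{-1}(s) = \tilde g^{-1}(\phi(s))$; chasing a point $x \in g^{-1}(s)$ through this identity gives $\tilde g = \phi \circ g$. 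Because $g$ is a closed (hence quotient) map and $g, \tilde g$ are continuous, both $\phi$ and $\phi^{-1}$ are continuous, so $\phi$ is a self-homeomorphism of $[0,1]$, and therefore strictly monotone. Now for any $s$ and any $x \in g^{-1}(s)$ we have $g(h(x)) = \tilde g(x) = \phi(g(x)) = \phi(s)$, so $g(h(g^{-1}(s))) = \{\phi(s)\}$ is a singleton. Hence for $s \le t$: if $\phi$ is increasing then $\phi(s) \le \phi(t)$, which says $g(h(g^{-1}(s))) \le g(h(g^{-1}(t)))$, and if $\phi$ is decreasing then $\phi(s) \ge \phi(t)$, which is the reverse inequality.

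The step I expect to be the main obstacle is the intrinsic description of the layer decomposition used above---equivalently, the claim that a subcontinuum $C$ of $X$ whose image $g(C)$ is nondegenerate cannot be nowhere dense. I would either cite this directly from Kuratowski's treatment of irreducible continua, or argue it by hand: writing $g(C) = [c,d]$ with $c < r < d$, one has $X = g^{-1}([0,r]) \cup g^{-1}([r,1])$ as a union of two subcontinua meeting exactly in the layer $g^{-1}(r)$; the subcontinuum $C$ meets both pieces and hence meets $g^{-1}(r)$, and one upgrades ``meets'' to ``contains'' using that $g^{-1}(r)$ is a maximal nowhere dense subcontinuum, so that $C \supseteq g^{-1}((c,d))$, an open set, contradicting nowhere-density of $C$. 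Granting that, the equality of the two partitions in the second paragraph is immediate, and the remainder is routine bookkeeping with quotient maps and monotone functions.
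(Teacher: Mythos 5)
The paper itself gives no proof of Proposition \ref{preserve order}; it is stated as a consequence of Kuratowski's structure theory of irreducible continua (Chapter V, \S 48 of \cite{Kuratowski}), so there is no in-paper argument to compare against. Your proposal is correct in substance and supplies exactly the missing justification: $g \circ h$ is again a Kuratowski map (the paper itself uses this in the proof of Lemma \ref{homeos of K}); the fibers of any Kuratowski map are precisely the maximal nowhere dense subcontinua, so $g$ and $g\circ h$ induce the same partition of $X$; the induced bijection $\phi$ of $[0,1]$ is a homeomorphism by the quotient-map argument and hence strictly monotone, which is the assertion of the proposition (indeed you get the stronger, clearly intended, uniform statement that one orientation works for all pairs $s \le t$). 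The only slip is in your fallback argument for the key fact that a subcontinuum $C$ with $g(C)$ nondegenerate cannot be nowhere dense: maximality of the layer $g^{-1}(r)$ does not ``upgrade meets to contains'' in the direction $C \supseteq g^{-1}(r)$. The correct and quicker use of maximality is the absorption argument: if $C$ were nowhere dense and met $g^{-1}(r)$, then $C \cup g^{-1}(r)$ would be a nowhere dense subcontinuum containing $g^{-1}(r)$, so maximality forces $C \subseteq g^{-1}(r)$ and hence $g(C) = \{r\}$, contradicting nondegeneracy of $g(C)$ --- no appeal to $C$ containing the open set $g^{-1}((c,d))$ is needed. With that one-line repair your argument is complete and self-contained, which is arguably an improvement on the paper's bare citation of Kuratowski.
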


That is, Proposition \ref{preserve order} says that any homeomorphism of an HDCC onto itself will preserve the order of the top layers in the sense that if one top layer is between two other top layers, the same will hold in its image under any homeomorphism.

\begin{figure}
\centering
\includegraphics[width=10cm]{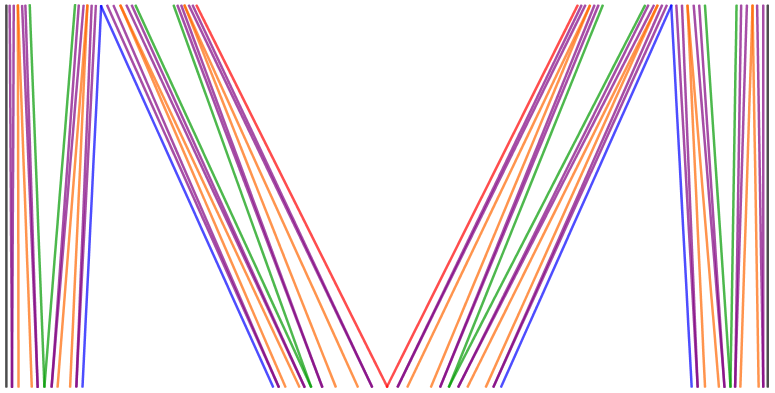}
\caption{The (horizontally elongated) image of the standard embedding of the Knaster $V \Lambda$-continuum, $K$, also known as the Knaster accordion, approximated by the first five steps along with the end layers. Note that $K$ contains no subcontinuum containing a dense ray and that it is also not path connected and is nowhere locally connected.}
\label{fig:Knaster}
\end{figure}

We may now begin to construct the Knaster $V \Lambda$-Continuum, otherwise called the Kuratowski accordion, which will be denoted by $K$ and shown in Figure \ref{fig:Knaster}. By a "$V$" and a "$\Lambda$," we mean an arc consisting only of two maximally straight line segments shaped like the letters $V$ and $\Lambda$, respectively.\footnote{More precisely, we mean arcs shaped like $\lor$ and $\land$, respectively.} We construct $K$ in such a way that also constructs what is referred here as \textit{the standard planar embedding of $K$}, or \textit{standard embedding of $K$} for short, whose (horizontally elongated) image is shown in Figure \ref{fig:Knaster}, as given by the following sequence of steps.
    \begin{itemize}
        \item[$(1.)$] Draw the $V$ whose vertex is the point $(1/2, 0)$ and whose endpoints are $(1/3, 1)$ and $(2/3, 1)$.
        \item[\vdots] 
        \item[$(n.)$] Consider the set $\Delta_n$ consisting of all $2^{n-1}$ quadrilaterals contained in $[0,1] \times [0,1]$ whose left and right sides contain either $\{0\} \times [0,1]$, $\{1\} \times [0,1]$, or maximal straight segments of two different $V$'s or $\Lambda$'s from the preceding steps, and whose interiors contain no points from any $V$'s or $\Lambda$'s in the preceding steps. If $n$ is even (odd), we draw the $2^{n-1}$-many $\Lambda$'s ($V$'s) each sitting in individual members of $\Delta_n$. The vertex of each $\Lambda$ ($V$) is on the top (bottom) side of its given quadrilateral, sitting halfway between the two top (bottom) vertices of the quadrilateral. Finally, the two endpoints of each $\Lambda$ ($V$) sit evenly spaced on the bottom (top) side of its given quadrilateral. 
        \item[\vdots]
        \item[$(\omega.)$] Let $K$ be the closure of the union of all $V$'s and $\Lambda$'s from each of the preceding steps.
    \end{itemize}
    
Again, a rough image of this standard embedding of $K$ can be seen in Figure \ref{fig:Knaster} as approximated by the first five steps listed above, together with the arcs $\{0\} \times [0,1]$ and $\{1\} \times [0,1]$. Generally, when referring to $K$ without specifying any particular embedding, it can be assumed we mean the image of the standard embedding of $K$. One may observe geometrically that $K$ is chainable. Upon taking the closure of the union of all $V$'s and $\Lambda$'s to form $K$ as described in step $\omega$, we have immediately inserted $\mathfrak{c}$-many straight arcs, two of which are the left end $\{0\} \times [0,1]$ and the right end $\{1\} \times [0,1]$, the rest of which lie between $V$'s and $\Lambda$'s. It can also be observed that these $\mathfrak{c}$-many straight arcs are the layers of continuity as well as the layers of cohesion of $K$, with $\{0\} \times [0,1]$ the left end layer and $\{1\} \times [0,1]$ the right end layer.  Each $V$ and $\Lambda$ of $K$ is layer of $K$ which is neither a layer of continuity nor a layer of cohesion. Thus, $K$ is an HDCC with each of its layers being arcs and thus nondegenerate. Furthermore, the layer level of $K$ is 2. Finally, one may observe geometrically from Figure \ref{fig:Knaster} that the set of accessible points of the standard embedding of $K$ consist of all points of end layers and non-cohesion layers of $K$ and the endpoints of interior layers of continuity of $K$. That is, if $L$ is either an end layer, a $V$-layer, or a $\Lambda$-layer, then every point of $L$ is accessible, whereas if $L$ is an interior straight-arc layer, then only the endpoints of $L$ are accessible.

Kazimierz Kuratowski attributed $K$ to Bronis\l aw Knaster in \cite{Kuratowski}, hence its namesake herein. It has also be referred to as the "Cajun accordion" by James Rogers in \cite{Rogers1} and \cite{Rogers2} and by David Lipham in \cite{Lipham}. In \cite{Rogers2}, James Rogers gives mention to both the Knaster accordion and its circularly chainable counterpart which he refers to as the "Zydeco accordion." The Zydeco accordion can be constructed by identifying the end layers of $K$. In particular, he points out that although many continuum theorists would consider the Zydeco accordion as being "rich" in rotations, it happens to be meager with respect to extendable intrinsic rotations about the origin.

Although vertices of layers of $K$ can be understood to be the geometric vertices of $V$ or $\Lambda$ layers of $K$, we also provide the following topological definition of vertices of layers of $K$.
\begin{definition}\label{vertex}
Let $L$ be a $V$-layer or a $\Lambda$-layer of $K$.  We say  that a point $v \in L$ is a \textbf{vertex} of $L$ if $v \in \interior_L(L)$ and for every open subset $U$ of $K$ containing $v$, $U$ contains infinitely many endpoints of layers of $K$. 
\end{definition}

If $v$ is a vertex of a layer $L$, it is understood that $L$ is not a layer of continuity of $K$, i.e., $L$ is either a $V$ or a $\Lambda$. Furthermore, we may also say that $v$ is a vertex of $K$, understanding that it is a vertex of a layer of $K$ which is not a layer of continuity of $K$.  Note that in the following lemma, we will let $E = \{0\} \times [0,1]$ and $E' = \{1\} \times [0,1]$ denote the end layers of $K$, we will let $p = (0,0)$ and $p' = (1,0)$ denote the bottom endpoints of $E$ and $E'$, respectively, and we will let $q = (0,1)$ and $q' = (1,1)$ denote the top endpoints of $E$ and $E'$, respectively.  Furthermore, we will denote the collection of bottom endpoints and vertices of layers of $K$ by $P$ while the top such points will be denoted by $Q$. That is, $P$ consists of all points in $K$ whose $y$-coordinates are 0 under the standard embedding of $K$ while $Q$ consists of all points in $K$ whose $y$-coordinates are 1 under the standard embedding of $K$.

\begin{proposition}\label{vertex to vertex}
Let $h$ be a homeomorphism of $K$ onto itself.  Then endpoints of layers of $K$ are mapped by $h$ to endpoints of layers of $K$ and vertices of $K$ are mapped by $h$ to vertices of $K$.
\end{proposition}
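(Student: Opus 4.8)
The plan is to observe that, once we know $h$ respects the layer decomposition of $K$ and restricts to a homeomorphism on each layer, both ``being an endpoint of a layer of $K$'' and ``being a vertex of $K$'' (Definition \ref{vertex}) are phrased entirely in terms of invariants of $h$, and hence are automatically preserved; the only genuine work is to verify, using the concrete standard embedding, that a vertex cannot be carried into an end layer or a layer of continuity.

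First I would recall that $h$ carries each top layer of $K$ onto a top layer of $K$: by Kuratowski's theory (\cite{Kuratowski}) the layers of an HDCC are precisely its maximal nowhere dense subcontinua, so they form an intrinsic decomposition, and a homeomorphism must respect it (cf.\ Proposition \ref{preserve order} and the discussion following it). Since every layer of $K$ is an arc, for each layer $L$ the restriction $h|_L \colon L \to h(L)$ is a homeomorphism between arcs; such a map carries the two non-cut-points of $L$ onto the two non-cut-points of $h(L)$, i.e.\ it carries the two endpoints of $L$ onto the two endpoints of $h(L)$. This proves the first assertion. Applying it also to $h^{-1}$, we conclude that the set $\mathcal{E}$ of all endpoints of all layers of $K$ is $h$-invariant: $h(\mathcal{E}) = \mathcal{E}$.

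For the second assertion, let $v$ be a vertex of $K$, so by Definition \ref{vertex} there is a $V$- or $\Lambda$-layer $L$ with $v \in \interior_L(L)$ such that every neighborhood of $v$ in $K$ contains infinitely many points of $\mathcal{E}$. Set $L' = h(L)$, again a layer of $K$ and an arc. Because $h|_L$ is a homeomorphism of arcs, $h(v) \in \interior_{L'}(L')$; and because $h$ is a self-homeomorphism of $K$ with $h(\mathcal{E}) = \mathcal{E}$, every neighborhood of $h(v)$ in $K$ still contains infinitely many points of $\mathcal{E}$. Thus $h(v)$ verifies every clause of Definition \ref{vertex} except possibly that its layer $L'$ is of $V$- or $\Lambda$-type, and it remains only to exclude the possibility that $L'$ is an end layer or an interior layer of continuity. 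Here I would invoke the standard embedding: every endpoint of every layer of $K$ lies in $P \cup Q$, that is, has second coordinate $0$ or $1$, whereas every point in the relative interior of an end layer or of an interior straight-arc layer has second coordinate strictly between $0$ and $1$, and so has a neighborhood in $K$ meeting $P \cup Q$, hence $\mathcal{E}$, not at all. This contradicts the neighborhood condition already established for $h(v)$, so $L'$ must be a $V$- or $\Lambda$-layer and $h(v)$ is a vertex of $K$. Running the same argument for $h^{-1}$ shows $h$ maps the set of vertices of $K$ onto itself.

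The invariance of the layer decomposition and of $\mathcal{E}$ is formal; I expect the only step with real content to be the last one, namely translating the abstract ``infinitely many endpoints of layers in every neighborhood'' condition of Definition \ref{vertex} into a fact about the embedding in order to rule out an end layer or a layer of continuity as the image of a $V$- or $\Lambda$-layer under $h$.
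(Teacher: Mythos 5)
Your proof is correct and follows essentially the same route as the paper's: endpoints go to endpoints because $h$ respects the (intrinsic) layer decomposition and each layer is an arc, and the vertex condition of Definition \ref{vertex} is preserved because the neighborhood-of-endpoints property is a topological invariant once the set of layer endpoints is shown to be $h$-invariant. The only difference is that you explicitly rule out the image layer being an end layer or an interior layer of continuity via the $y$-coordinate argument, a point the paper leaves implicit in the remark following Definition \ref{vertex}; making it explicit is a harmless (indeed welcome) addition.
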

\begin{proof}
Since every layer of $K$ is an arc, the endpoints of layers of $K$ must be mapped to endpoints of layers of $K$.
If $v$ is a vertex of $K$, then it is in the interior of some non-continuity layer of $K$.  Thus, $h(v)$ must be contained the interior of some layer of $K$.  Furthermore, if $U$ is an open subset of $K$ containing $h(v)$, then $h^{-1}(U)$ is an open subset of $K$ containing $v$ and thus must contain an infinite number of endpoints of layers of $K$, whence $U$ must contain an infinite number of endpoints of layers of $K$. Therefore, $h(v)$ is also a vertex of $K$.
\end{proof}

\begin{corollary}\label{like layers}
If $h$ is a homeomorphism of $K$ onto itself, then every $\Lambda$-layer or $V$-layer of $K$ is mapped to a $\Lambda$-layer or $V$-layer of $K$ and every layer of continuity of $K$ is mapped to a layer of continuity of $K$.
\end{corollary}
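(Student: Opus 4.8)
The plan is to combine Proposition \ref{vertex to vertex} with the fact that a self-homeomorphism of $K$ permutes the top layers of $K$. First I would recall that $h$ carries each top layer of $K$ onto a top layer of $K$: the top layers are exactly the maximal nowhere dense subcontinua of $K$, a property preserved by any self-homeomorphism, and this is already tacitly used in Proposition \ref{preserve order} (where the expression $g(h(g^{-1}(s)))$ only makes sense once $h(g^{-1}(s))$ is known to lie in a single top layer). In particular, for a top layer $L$ the image $h(L)$ is again a top layer, and since distinct top layers are disjoint, every point of $K$ belongs to a unique top layer.

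Next I would isolate the feature that distinguishes $V$-layers and $\Lambda$-layers from the remaining top layers. Among the top layers of $K$, a layer is a $V$-layer or a $\Lambda$-layer exactly when it is not a layer of continuity, and by Definition \ref{vertex}, together with the geometry of $K$, these are precisely the top layers that contain a vertex of $K$; the end layers and the interior straight-arc layers are layers of continuity and contain no vertex (an interior point of a straight-arc layer has second coordinate strictly between $0$ and $1$ under the standard embedding, so a small enough neighborhood of it contains no endpoints of layers of $K$, all of which lie on the lines $y=0$ and $y=1$).

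Granting these two observations, the conclusion is short. If $L$ is a $V$-layer or a $\Lambda$-layer, pick a vertex $v\in L$; by Proposition \ref{vertex to vertex}, $h(v)$ is a vertex of $K$, hence lies in a top layer that is not a layer of continuity, and since $h(v)\in h(L)$ with $h(L)$ a top layer, disjointness of the top-layer decomposition forces $h(L)$ to be that non-continuity layer, i.e.\ a $V$-layer or a $\Lambda$-layer. For the other half, if $L$ is a layer of continuity then $h(L)$ is a top layer; were it a $V$- or $\Lambda$-layer it would contain a vertex $w$, and then $h^{-1}(w)$ — a vertex of $K$ by Proposition \ref{vertex to vertex} applied to the homeomorphism $h^{-1}$ — would lie in $h^{-1}(h(L))=L$, contradicting that $L$ is a layer of continuity. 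Hence $h(L)$ is a layer of continuity.

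The only point requiring care is the bookkeeping that makes the first half work: a vertex of $K$ is \emph{a priori} only a vertex of \emph{some} top layer, and the argument needs it to be a vertex of the \emph{specific} layer $h(L)$ (respectively $L$); this is exactly what the disjointness of the top-layer decomposition, combined with $h(v)\in h(L)$, delivers. I expect that, together with the (already implicit) fact that $h$ permutes the top layers of $K$, to be the only places needing attention; everything else is immediate from Proposition \ref{vertex to vertex}.
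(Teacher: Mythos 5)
Your argument is correct and is essentially the proof the paper intends: the corollary is stated as an immediate consequence of Proposition \ref{vertex to vertex}, using exactly the facts you make explicit, namely that a self-homeomorphism of $K$ permutes the top layers, that the $V$- and $\Lambda$-layers are precisely the top layers containing a vertex, and (via $h^{-1}$) that layers of continuity therefore go to layers of continuity. Your added bookkeeping (disjointness of the layer decomposition, the intrinsic characterization of layers) just spells out what the paper leaves tacit.
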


\begin{lemma}\label{homeos of K}
Let $h$ be a homeomorphism of $K$ onto itself.  Then $h(\{E,E'\}) = \{E, E'\}$.  Furthermore, if $h(p) \in \{p, p'\}$, then $h(P) = P$ and $h(Q) = Q$, and if $h(p) \in \{q, q'\}$, then $h(P) = Q$ and $h(Q) = P$.
\end{lemma}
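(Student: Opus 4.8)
The plan is to first settle $h(\{E,E'\})=\{E,E'\}$ from the order structure, and then to reduce the remaining two implications to the single statement that $h(P)\in\{P,Q\}$. For the first assertion: since every layer of $K$ is a $V$-layer, a $\Lambda$-layer, or a layer of continuity, Corollary~\ref{like layers} shows that $h$ permutes the layers of $K$, so $h$ induces a bijection $\bar h$ of $[0,1]$ onto itself via $h(g^{-1}(t))=g^{-1}(\bar h(t))$. By Proposition~\ref{preserve order} this $\bar h$ is monotone, hence a homeomorphism of $[0,1]$, so $\bar h(\{0,1\})=\{0,1\}$; since $E=g^{-1}(0)$ and $E'=g^{-1}(1)$, this gives $h(\{E,E'\})=\{E,E'\}$.

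Next I would observe that the lemma's two implications follow at once from $h(P)\in\{P,Q\}$: by Proposition~\ref{vertex to vertex}, $h$ restricts to a bijection of $P\cup Q$ onto itself, $P\cap Q=\emptyset$, and $p\in P$, so if $h(p)\in\{p,p'\}\subseteq P$ then $h(P)$ meets $P$, forcing $h(P)=P$ and $h(Q)=Q$, while if $h(p)\in\{q,q'\}\subseteq Q$ then $h(P)=Q$ and $h(Q)=P$. So the whole problem is to prove $h(P)\in\{P,Q\}$. Let $\phi\colon(x,y)\mapsto(1-x,y)$ be the $x$-reflection of the standard embedding onto itself; one checks directly from the symmetry of the construction that $\phi$ is a self-homeomorphism of $K$ with $\phi(P)=P$, $\phi(Q)=Q$, reversing the layer order. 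If $\bar h$ is decreasing, replace $h$ by $h\circ\phi$: this changes neither $h(P)$ nor the goal, and makes the new $h$ order-preserving. So we may assume $\bar h$ is an increasing homeomorphism of $[0,1]$, whence $h(E)=E$ and $h(E')=E'$.

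For each layer $L$ put $P_L=L\cap P$ and $Q_L=L\cap Q$, the points of $L$ with second coordinate $0$, respectively $1$; note that $\bigcup_L P_L=P$, that for a layer of continuity $P_L$ and $Q_L$ are the two endpoints of $L$, that for a $V$-layer $P_L$ is the vertex and $Q_L$ the pair of endpoints, and for a $\Lambda$-layer the reverse. By Proposition~\ref{vertex to vertex}, $h$ carries the distinguished points (endpoints and vertices) of $L$ to those of $h(L)$, but may interchange the $P$- and $Q$-roles, so I would define $\epsilon(g(L))=+1$ if $h(P_L)=P_{h(L)}$ and $\epsilon(g(L))=-1$ otherwise, obtaining a function $\epsilon\colon[0,1]\to\{+1,-1\}$. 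Since $h$ permutes the layers of $K$, $\epsilon\equiv+1$ yields $h(P)=\bigcup_L h(P_L)=\bigcup_L P_{h(L)}=P$, and $\epsilon\equiv-1$ yields $h(P)=Q$. Thus it suffices to show $\epsilon$ is constant, and as $[0,1]$ is connected it is enough to show $\epsilon$ is locally constant.

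This last step is where I expect the real work, and the main obstacle, to lie. At a parameter $t_0$ for which $g^{-1}(t_0)$ is a layer of continuity, $g^{-1}$ is continuous at $t_0$ (Definition~\ref{layers of continuity}), and also at $\bar h(t_0)$ (whose layer $h(g^{-1}(t_0))$ is again a layer of continuity, by Corollary~\ref{like layers}), so along any $t_n\to t_0$ both $g^{-1}(t_n)$ and $g^{-1}(\bar h(t_n))$ converge in the Hausdorff metric; since $P$ and $Q$ are disjoint closed sets and $h$ is continuous, following the $P$-point through these limits forces $\epsilon(t_n)=\epsilon(t_0)$ for large $n$. At a dyadic parameter $d$, where $g^{-1}(d)=L_d$ is a $V$- or $\Lambda$-layer and $g^{-1}$ is \emph{not} continuous, I would instead use that $h$ maps $\ell(L_d)$ onto $\ell(h(L_d))$ and $r(L_d)$ onto $r(h(L_d))$ --- because $\bar h$ is an increasing bijection, so $h$ carries $g^{-1}([0,d))$ onto $g^{-1}([0,\bar h(d)))$ --- together with the fact, coming from the geometry of $K$, that $g^{-1}(s)$ converges in the Hausdorff metric to $\ell(L_d)$ as $s\to d^-$ and to $r(L_d)$ as $s\to d^+$; tracking the $P$-point through these one-sided limits exactly as in the continuity case gives $\epsilon(s)=\epsilon(d)$ for all $s$ in a neighborhood of $d$. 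Combining the two cases yields local constancy of $\epsilon$ on all of $[0,1]$, which completes the argument. The delicate points I anticipate are justifying the one-sided limits $g^{-1}(s)\to\ell(L_d)$ and $g^{-1}(s)\to r(L_d)$, and keeping straight which endpoint of a layer near $L_d$ is its $P$-point as one passes to these limits; the rest is soft.
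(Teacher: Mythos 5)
Your argument is correct in outline, but it takes a noticeably heavier route than the paper's. For the first claim the paper simply observes that $g\circ h$ is again a Kuratowski map, so $h(\{E,E'\})=h(h^{-1}(g^{-1}(\{0,1\})))=\{E,E'\}$; your induced monotone bijection $\bar h$ of $[0,1]$ accomplishes the same thing. For the main claim the paper argues directly on $P$: setting $C=\{c\in P \mid h(c)\in Q\}$ and $D=P\setminus C$, continuity of $h$ together with Proposition \ref{vertex to vertex} and the fact that $P$ and $Q$ lie on the lines $y=0$ and $y=1$ (so are a positive distance apart) shows $C$ and $D$ are both open, hence clopen, in $P$; pushing forward by $g$ (every layer meets $P$, and no layer meets both $C$ and $D$, since the two bottom endpoints of a $\Lambda$-layer are sent jointly to the endpoint pair of its image) yields a clopen partition of $[0,1]$, contradicting connectedness. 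Your function $\epsilon$ is exactly this partition read on $[0,1]$, and your reduction of the two implications to $h(P)\in\{P,Q\}$ matches the paper. What you do differently --- the reflection trick to normalize $\bar h$, and the case split at continuity versus $V/\Lambda$ parameters using one-sided Hausdorff limits $g^{-1}(s)\to\ell(L_d)$ and $g^{-1}(s)\to r(L_d)$ --- is all avoidable, and the step you expected to be the ``real work'' is in fact soft: because $P_L$ and $Q_L$ sit on $y=0$ and $y=1$, continuity of $h$ at the finitely many $P$-points of a single layer (or, if you insist on arguing with whole layers, just the upper semicontinuity of $g^{-1}$ recorded in Section 1) already forces local constancy of $\epsilon$ at every parameter, dyadic or not; the one-sided limits you flag are true for $K$, but you neither prove them nor need them. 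What your approach buys is an explicit layer-by-layer picture of how the orientation sign propagates; what the paper's buys is brevity and independence from the finer limit geometry of $K$.
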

\begin{proof}
Let $g: K \to [0,1]$ be a Kuratowski map.  Then $g \circ h$ is also a Kuratowski map.  Thus, $h(\{E, E'\}) = h((g\circ h)^{-1}(\{0,1\})) = h(h^{-1}(g^{-1}(\{0,1\})) = g^{-1}(\{0,1\}) = \{E, E'\}$.


Suppose now that $h(p) \in \{p, p'\}$, but that there exists a nonempty subset $C$ of $P$ such that $h(c) \notin P$ for every $c \in C$. By Proposition \ref{vertex to vertex}, it follows that $h(c) \in Q$ for every $c \in C$.  Since $h$ is a homeomorphism, it follows that for every $c \in C$, there exists an open subset $U_c$ of $K$ containing $c$ such that $h(P \cap U_c) \subset Q$. Let $D = P\backslash C$---the set of all members of $P$ mapped into $P$ by $h$. Again, since $h$ is a homeomorphism, it follows that for every $d \in D$, there is an open subset $U_d$ of $K$ containing $d$ such that $h(P \cap U_d) \subset P$.
Note that $C = \bigcup_{c \in C}(P \cap U_c)$ is open in $P$ and that $D = \bigcup_{d\in D}(P \cap U_d)$ is also open in $P$.  Also, since $C = P\backslash D$, it follows that both $C$ and $D$ are also closed in $P$.

Let $g$ be a Kuratowski map of $K$ onto $[0,1]$ as before.  Since $g$ is continuous, $g(C)$ and $g(D)$ are both closed in $[0,1]$.  Since $[0,1] = g(C) \cup g(D)$ and $g(C) = [0,1]\backslash g(D)$ with $g(C)$ and $g(D)$ both nonempty, it follows that $g(C)$ and $g(D)$ are also open in $[0,1]$, making them both nonempty, open and closed subsets of $[0,1]$ whose union is $[0,1]$.  This contradicts that $[0,1]$ is connected.  Therefore, $h(P) = P$ and $h(Q) = Q$.

It follows similarly that if $h(p) \in \{q, q'\}$, then $h(P) = Q$ and $h(Q) = P$.
\end{proof}

The following is a lemma which will be referenced in many of the proofs of Section 3. We will denote by $\pi$ the projection of the $xy$-plane onto the $y$-axis so that $\pi(x,y) = (0,y)$ for every $(x,y) \in \R^2$. It should also be noted in the following lemma that by \textit{maximal subarc}, we allow the possibility of the empty set or a singleton.


\begin{lemma}\label{small bends}
Let $(A_1, A_2, A_3, \ldots)$ be a sequence of arcs in $\R^2$ converging to $\{0\} \times [0,1]$ as $i \to \infty$ so that for each $i \in \N$, $\pi \upharpoonright A_i$ is a homeomorphism of $A_i$ onto $\pi(A_i)$. Let $h$ be a homeomorphism of the $xy$-plane onto itself so that $h(\{0\} \times [0,1]) = \{0\} \times [0,1]$. For every $t \in [0,1]$ and for each $i \in \N$, let $C_{t,i}$ denote the maximal subarc of $h(A_i)$ having the property that each of its endpoints lie on the horizontal line given by the equation $y = t$. Then for every $t \in [0,1]$, $\diam(C_{t,i}) \to 0$ as $i \to \infty$.
\end{lemma}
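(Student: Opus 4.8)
The plan is to transport $C_{t,i}$ back to $A_i$ by $h^{-1}$, exploit the hypothesis that $\pi\upharpoonright A_i$ is a homeomorphism to see that the resulting subarc of $A_i$ has vanishing diameter, and then transport forward again using uniform continuity of $h$ on a fixed compact set.

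First I would record a preliminary observation. Since $A_i \to \{0\}\times[0,1]$ in the Hausdorff metric, all but finitely many $A_i$ lie in a fixed compact neighborhood $N$ of $\{0\}\times[0,1]$, on which both $h$ and $h^{-1}$ are uniformly continuous; consequently $h(A_i)\to h(\{0\}\times[0,1]) = \{0\}\times[0,1]$ as well. Now fix $t\in[0,1]$. For each $i$ with $C_{t,i}\neq\emptyset$, let $z_i^-$ and $z_i^+$ be the two endpoints of $C_{t,i}$ (equal, if $C_{t,i}$ is a single point). By the definition of $C_{t,i}$ both of these points lie on the line $y=t$, so their $y$-coordinates equal $t$; meanwhile $z_i^\pm\in h(A_i)$ forces their $x$-coordinates to tend to $0$ as $i\to\infty$. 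Hence $z_i^-\to(0,t)$ and $z_i^+\to(0,t)$.

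Next, set $w_i^\pm = h^{-1}(z_i^\pm)\in A_i$. Since $h^{-1}$ is continuous, $w_i^\pm\to h^{-1}(0,t)=:w^*$; and because $h$ fixes $\{0\}\times[0,1]$ setwise, $w^* = h^{-1}(0,t)\in\{0\}\times[0,1]$, say $w^*=(0,t^*)$. In particular the $y$-coordinates of $w_i^-$ and of $w_i^+$ both converge to $t^*$. Now $h^{-1}(C_{t,i})$ is a subarc (possibly a single point) of $A_i$ whose endpoints are $w_i^-$ and $w_i^+$. Applying the homeomorphism $\pi\upharpoonright A_i$, which carries $A_i$ onto a subinterval of the $y$-axis, we see that $h^{-1}(C_{t,i})$ is carried onto the closed subinterval of $\pi(A_i)$ between $\pi(w_i^-)$ and $\pi(w_i^+)$; therefore every point of $h^{-1}(C_{t,i})$ has $y$-coordinate lying between the $y$-coordinates of $w_i^-$ and $w_i^+$. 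Combining the pinching together of these two $y$-coordinates (both tending to $t^*$) with the fact that $h^{-1}(C_{t,i})\subseteq A_i$ has $x$-coordinates tending uniformly to $0$, we conclude $\diam(h^{-1}(C_{t,i}))\to 0$ as $i\to\infty$.

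Finally, since $C_{t,i} = h(h^{-1}(C_{t,i}))$ with $h^{-1}(C_{t,i})\subseteq A_i\subseteq N$ for all large $i$ and $h\upharpoonright N$ uniformly continuous, $\diam(h^{-1}(C_{t,i}))\to 0$ yields $\diam(C_{t,i})\to 0$, which is the assertion (the cases where $C_{t,i}$ is empty or degenerate being trivial). The step I expect to be the crux is the one where the hypothesis on $\pi\upharpoonright A_i$ is used: one must see that pinning both endpoints of $C_{t,i}$ to the line $y=t$, together with the monotone graph-like structure of $A_i$ over the $y$-axis, forces the $y$-extent — and hence the whole diameter — of the preimage subarc to collapse. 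The other point requiring care is keeping all the relevant arcs inside one fixed compact region, so that the uniform-continuity arguments for $h$ and $h^{-1}$ are legitimate.
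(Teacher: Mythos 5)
Your proof is correct and follows essentially the same route as the paper's: pull $C_{t,i}$ back by $h^{-1}$, note its endpoints converge to $h^{-1}((0,t))$, use the hypothesis that $\pi\upharpoonright A_i$ is a homeomorphism to pinch the whole preimage subarc to that point, and then push forward by $h$. Your version merely spells out more explicitly the pinching of $y$-coordinates and the uniform-continuity step that the paper leaves implicit.
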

\begin{proof}
Let $t \in [0,1]$, and let $T$ denote the line given by the equation $y = t$. For each $i \in \N$, let $C_i := C_{t,i}$, and let $T_i$ be the line segment contained in $T$ whose endpoints are the endpoints of $C_i$. Note that since $T_i \to \{(0,t)\}$ as $i \to \infty$, we have $h^{-1}(T_i) \to \{h^{-1}((0,t))\}$ as $i \to \infty$.  Thus, the endpoints of $h^{-1}(T_i)$ converge to $\{h^{-1}((0,t))\}$ as $i \to \infty$.  Since for each $i \in \N$, $\pi \upharpoonright A_i$ is a homeomorphism of $A_i$ onto $\pi(A_i)$, and because the endpoints of $h^{-1}(C_i)$ are also the endpoints of $h^{-1}(T_i)$, it follows that $h^{-1}(C_i) \to \{h^{-1}((0,t))\}$ as $i \to \infty$. Therefore $h(C_i) \rightarrow \{(0,t)\}$ as $i \to \infty$, whence $\diam(C_i) \to 0$ as $i \to \infty$.
\end{proof}

\section{Embeddings of $K$: $\aleph_0$-many Endpoints of Layers Inaccessible}

Here, we construct a collection of $\mathfrak{c}$-many mutually inequivalent planar embeddings of $K$. Each embedding from this collection has the property that all but a countably infinite set of layers of continuity (straight-arc layers) have at least one point not being accessible from the complement of the image of the embedding. More precisely, we fix a sequence $\mathcal{Q} = (Q_1, Q_2, Q_3, \ldots )$ of interior layers of continuity of $K$ converging to the left end layer $E$ of $K$, arranged in order from right to left. After this, we choose an arbitrary sequence $(L_1, L_2, L_3, \ldots)$ of $V$ or $\Lambda$ layers of $K$ such that for each $i \in \N$, $L_i$ lies between $Q_i$ and $Q_{i+1}$, with $L_0$ designated as the right end layer $E'$ of $K$. Then, we choose a sequence $A = (a_1, a_2, a_3, \ldots)$ of $0$'s and $1$'s so that for each $i \in \N$, we produce a certain planar re-embedding of $[r(L_i), \ell(L_{i-1})]$ which keeps $Q_i$ straight and perturbs $[r(L_i), Q_i) \cup (Q_i, \ell(L_{i-1})]$ about $Q_i$ in a certain manner depending on when $a_i = 0$ or when $a_i = 1$. In doing so, every point in the image of $Q_i$ under such a re-embedding is inaccessible from the complement when $a_i = 0$, and all put one endpoint of $Q_i$ is inaccessible from the complement if $a_i = 1$. Each such re-embedding is made in such a way that the union of their images together with $E = \{0\} \times [0,1]$ is a re-embedding of $K$. We will see that two embeddings constructed according to different sequences of 0's and 1's will produce inequivalent embeddings, thus providing us with a collection of $\mathfrak{c}$-many mutually inequivalent planar embeddings of $K$.

To understand how $\mathcal{Q}$ is used to provide a collection of mutually inequivalent embeddings, we must motivate how to construct what we call \textit{Type-0} and \textit{Type-1} planar embeddings of $K$ about one of its interior layers of continuity. Suppose $Q$ is an interior layer of continuity of $K$, with its bottom endpoint labeled $b$ and its top endpoint labeled $t$.  Two ways in which we can embed $K$ in the plane is so that the image of $Q$ has none of its points accessible or so that only the image of $b$ is accessible.  Again, let $E$ and $E'$ denote the left and right end layers of $K$ respectively.

We will now describe a decomposition of $[E, Q)$ and $(Q,E']$ as shown in Figure \ref{fig:decomposition}. Let $K_1^{(\ell)}, K_2^{(\ell)}, K_3^{(\ell)}, \ldots$ be a decomposition of $[E,Q)$ with $K_i^{(\ell)} \to Q$ as $i \to \infty$ and so that for each $i \in \N$, 
    \begin{itemize}
        \item[($1_{\ell,i}$.)] $K_i^{(\ell)}$ is a homeomorphic copy of $K$,
        \item[($2_{\ell,i}$.)] $K_i^{(\ell)} \cap K_j^{(\ell)} \neq \emptyset$ if and only if $|i-j| \leq 1$, and 
        \item[($3_{\ell,i}$.)] $K_i^{(\ell)} \cap K_{i+1}^{(\ell)} = \{\ell_i\}$, where $\ell_i$ is the vertex of a $\Lambda$-layer of $K$ if $i$ is odd and is the vertex of a $V$-layer of $K$ if $i$ is even.
    \end{itemize}
Similarly, let $K_1^{(r)}, K_2^{(r)}, K_3^{(r)}, \ldots$ be a decomposition of $(Q,E']$ with $K_i^{(r)} \to Q$ as $i \to \infty$ and so that for each $i \in \N$, 
    \begin{itemize}
        \item[($1_{r,i}$.)] $K_i^{(r)}$ is a homeomorphic copy of $K$,
        \item[($2_{r,i}$.)] $K_i^{(r)} \cap K_j^{(r)} \neq \emptyset$ if and only if $|i-j| \leq 1$, and 
        \item[($3_{r,i}$.)] $K_i^{(r)} \cap K_{i+1}^{(r)} = \{r_i\}$, where $r_i$ is the vertex of a $\Lambda$-layer of $K$ if $i$ is odd and is the vertex of a $V$-layer of $K$ if $i$ is even.
    \end{itemize}
A depiction of the aforementioned decomposition of $[E,Q)$ and $(Q,E']$ is shown in Figure \ref{fig:decomposition}. 

\begin{figure}
\centering
\includegraphics[width=16cm]{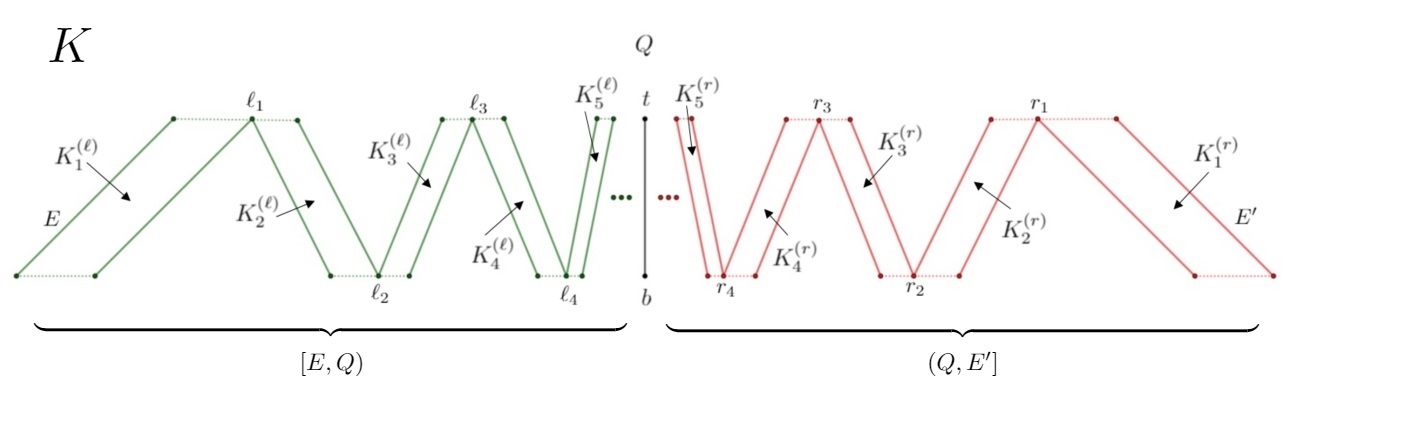}
\caption{The decomposition of $[E, Q)$ and $(Q, E']$, where $E$ and $E'$ are the left and right end-layers of $K$, respectively, and the interior layer of continuity $Q$ is the straight arc in the middle.  The thick red and green line segments other than $E$ and $E'$ are either $\Lambda$-layers of $V$-layers of $K$ defining the aforementioned decompositions, where as the dotted horizontal red and green line segments represent the endpoints and vertices of interior layers of each $K_i^{(\ell)}$ and each $K_i^{(r)}$, respectively.}
\label{fig:decomposition}
\end{figure}

\subsection{Type-$0$ and Type-$1$ Planar Embeddings of $K$ about $Q$}\label{Section Type-0}

\

Given the decompositions in the previous paragraph, we shall describe how to construct a planar embedding $\zeta$ of $K$ so that zero points of $\zeta(Q)$ are accessible from the complement of the $\zeta(K)$. The most efficient way to describe such an embedding is through comparing Figure \ref{fig:decomposition} with Figure \ref{fig:0-type K}. Again, Figure \ref{fig:decomposition} depicts $K$ with the decompositions of $[E, Q)$ and $(Q,E']$ as mentioned above.  Figure \ref{fig:0-type K} depicts $\zeta(K)$ by exhibiting how $Q$ and the elements of the aforementioned decompositions are mapped under $\zeta$.  As shown there, $\zeta(Q)$ is mapped to a vertical line segment so that $\zeta(b)$ is its bottom endpoint and $\zeta(t)$ is its top endpoint.  The rest of $[E, Q)$ and $(Q,E']$ is mapped by $\zeta$ in such a way that the image of the top endpoints and vertices of layers of $K\backslash Q$ converge to $\zeta(t)$ and $\zeta(b)$, respectively,  while bending, stretching, and shrinking members of the decompositions in such a way so that $\zeta: K \to \zeta(K)$ is a homeomorphism. In doing so, no point of $\zeta(Q)$ is accessible from the complement of $\zeta(K)$. Such an embedding is similar in nature to an embedding of a double $\sin(1/x)$-curve with two rays approaching one limiting arc in the middle, as shown in Figure \ref{fig:Type-0}. We call the planar embedding $\zeta$ a \textbf{type-0 planar embedding of $K$ about $Q$}, or just a \textbf{type-0 embedding} for short.

\begin{figure}
\centering
\includegraphics[width=8cm]{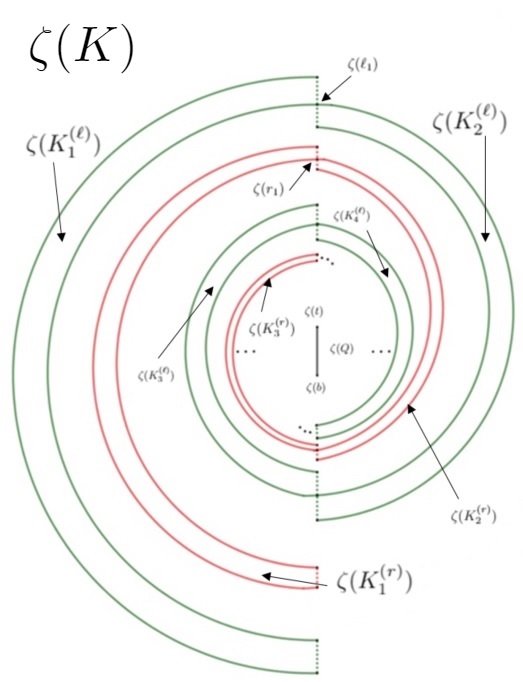}
\caption{A type-0 planar embedding of $K$ about the interior layer of continuity $Q$. Note that $\zeta(Q)$ is completely "buried" by $\zeta([E,Q))$ and $\zeta((Q, E'])$ in the sense that no point of $\zeta(Q)$ is accessible from the complement of $\zeta(K)$.}
\label{fig:0-type K}
\end{figure}


\begin{figure}
\centering
\includegraphics[width=13cm]{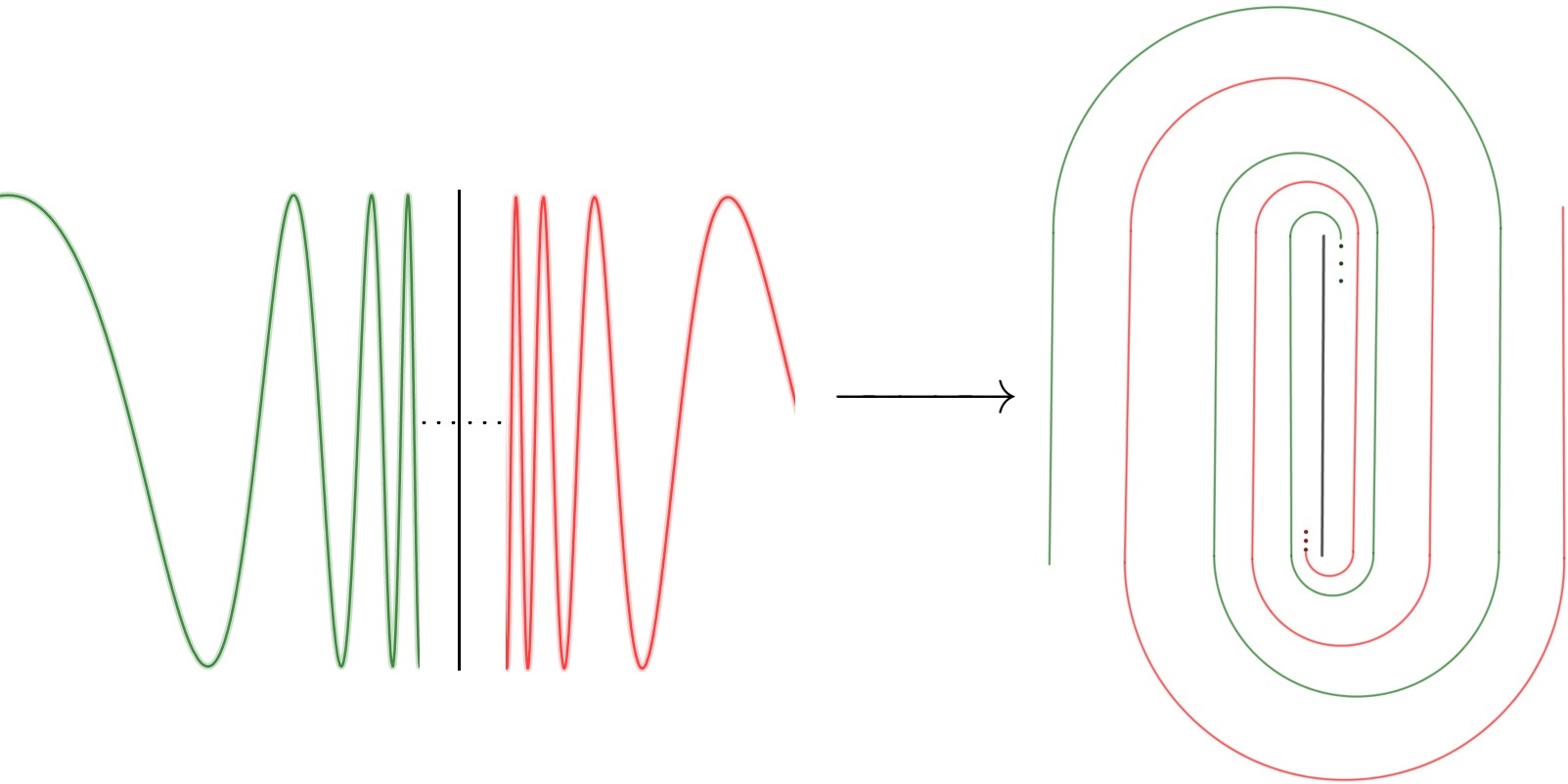}
\caption{Here, we have a double $\sin(1/x)$-curve with two rays on opposite sides of a single limiting arc being re-embedded in the plane so that both approaching rays completely "bury" the limiting arc, making no point of it accessible from the complement.  This embedding stands as a model for the type-0 planar embedding $\zeta$ of $K$ shown in Figure \ref{fig:0-type K}.}
\label{fig:Type-0}
\end{figure}

Also given the previous decompositions of $[E,Q)$ and $(Q, E']$, we shall next describe how to construct a planar embedding $\xi$ of $K$ so that only one of the points of $\xi(Q)$ is accessible from the complement of $\xi(K)$.  In fact, since the interiors of interior layers of continuity of $K$ are inaccessible under any planar embedding of $K$, the single accessible point under this type of embedding will be an endpoint of $\xi(Q)$. Just as in the description of the type-0 embedding, the most efficient way to describe the embedding $\xi$ is through comparing Figure \ref{fig:decomposition} with Figure \ref{fig:1-type K}. Figure \ref{fig:1-type K} depicts $\xi(K)$ by showing how $Q$ and the elements of the aforementioned decompositions are mapped under $\xi$.  As shown there, $\xi(Q)$ is mapped to a vertical line segment so that $\xi(b)$ is its bottom endpoint and $\xi(t)$ is its top endpoint.  The rest of $[E, Q)$ and $(Q,E']$ is mapped by $\xi$ in such a way that the image of the top endpoints and vertices of layers of $K\backslash Q$ converge to $\xi(t)$ and $\xi(b)$, respectively, while bending, stretching, and shrinking members of the decompositions in such a way so that $\xi: K \to \xi(K)$ is a homeomorphism. In doing so, $\xi(b)$ is accessible from the complement of $\xi(K)$ while every other point of $\xi(Q)$ is not. Such an embedding is similar in nature to an embedding of a double $\sin(1/x)$-curve with two rays approaching one limiting arc in the middle, as shown in Figure \ref{fig:Type-1}. We call the planar embedding $\xi$ a \textbf{type-1 planar embedding of $K$ about $Q$}, or just a \textbf{type-1 embedding} for short. 

\begin{proposition}\label{zeta not xi}
Let $\zeta$ and $\xi$ be a type-0 and type-1 planar embedding of $K$ about an interior layer of continuity, $Q$, of $K$.  Then $\zeta$ and $\xi$ are inequivalent planar embeddings of $K$.
\end{proposition}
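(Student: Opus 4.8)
The plan is to use Proposition \ref{different access}: it suffices to exhibit a point of $K$ whose image is accessible under exactly one of $\zeta$ and $\xi$. The natural candidate is the bottom endpoint $b$ of $Q$. By construction, $\xi(b)$ is accessible from the complement of $\xi(K)$ (this is the defining feature of a type-1 embedding), so it remains to show that $\zeta(b)$ is \emph{not} accessible from the complement of $\zeta(K)$; indeed by construction \emph{no} point of $\zeta(Q)$ is accessible, and $b \in Q$. One subtlety is that the constructions of $\zeta$ and $\xi$ above are described geometrically via Figures \ref{fig:0-type K} and \ref{fig:1-type K} rather than with formulas, so the cleanest route is to state the accessibility/inaccessibility claims as the content guaranteed by the construction and then invoke Proposition \ref{different access} directly. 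Alternatively, if one wants the proposition to be self-contained, one can argue inaccessibility of $\zeta(b)$ from the decomposition data $(1_{\ell,i})$--$(3_{\ell,i})$ and $(1_{r,i})$--$(3_{r,i})$.

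For the self-contained inaccessibility argument, I would proceed as follows. Suppose, for contradiction, that there is an arc $\alpha$ in the plane with $\alpha \cap \zeta(K) = \{\zeta(b)\}$, meeting $\zeta(K)$ only at the endpoint $\zeta(b)$ of $\zeta(Q)$. The key geometric feature of the type-0 embedding is that, arbitrarily close to $\zeta(b)$, the images $\zeta(K_i^{(\ell)})$ and $\zeta(K_i^{(r)})$ of the decomposition pieces (with $i$ large) form, together with $\zeta(Q)$, a nested sequence of ``pockets'' surrounding $\zeta(b)$: the vertices/endpoints of layers of $K \setminus Q$ with $y$-coordinate near $0$ accumulate at $\zeta(b)$ from both sides of $\zeta(Q)$, so that any small neighborhood of $\zeta(b)$ in the plane has its complement-in-$\zeta(K)$ component structure forcing the arc $\alpha$ to cross $\zeta(K)$. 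Concretely, I would choose a small circle $S$ about $\zeta(b)$ missing $\zeta(t)$ and missing $\zeta(K_i^{(\cdot)})$ for all but finitely many $i$; the tail pieces $\zeta(K_i^{(\ell)})$, $\zeta(K_i^{(r)})$ for large $i$ then separate $\zeta(b)$ from $S$ within the disk bounded by $S$, except along the one-dimensional set $\zeta(Q)$, which itself reaches $S$. Since $\alpha$ leaves every neighborhood of $\zeta(b)$, it must exit the disk, hence cross one of these separating pieces, contradicting $\alpha \cap \zeta(K) = \{\zeta(b)\}$.

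The main obstacle is precisely this last separation argument: making rigorous the intuitive statement that ``$\zeta(Q)$ is completely buried'' requires a careful plane-topology argument (an application of a separation theorem in the plane, e.g.\ via the Jordan curve theorem applied to the boundaries of the pockets, or via the theory of accessible points of plane continua). In a fully detailed write-up one would want a lemma stating that if a point $x$ of a plane continuum $Y$ lies in a layer $Q$ and every neighborhood of $x$ contains a crosscut of $Y$ separating $x$ from the rest of $Y$ except along $Q$, and $Q$ itself is ``surrounded'' on both sides, then $x$ is inaccessible. Given the level of detail in the surrounding text (which establishes the constructions pictorially), I expect the intended proof is the short one: $\xi(b)$ is accessible and $\zeta(b)$ is not, both by construction, so Proposition \ref{different access} applies and $\zeta$ and $\xi$ are inequivalent. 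I would present that as the proof, with a sentence or two recalling why $\zeta(b)$ is inaccessible (no point of $\zeta(Q)$ is accessible, which is how $\zeta$ was built), and relegate the finer plane-topology justification to the construction itself.
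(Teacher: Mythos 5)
Your proposal is correct and matches the paper's own proof: the paper likewise observes that one endpoint of $\xi(Q)$ (namely $\xi(b)$) is accessible while no point of $\zeta(Q)$ is, both guaranteed by the constructions of the type-0 and type-1 embeddings, and then invokes Proposition \ref{different access}. The additional separation argument you sketch for the inaccessibility of $\zeta(b)$ is not carried out in the paper, which (as you anticipated) leaves that to the geometric description of the construction.
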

\begin{proof}
Since $\zeta(K)$ and $\xi(K)$ have a different set of accessible points, in particular, since no endpoints of $\zeta(Q)$ is accessible from the complement of $\zeta(K)$ while one endpoint of $\xi(Q)$ is accessible from the complement of $\xi(K)$, it follows by Proposition \ref{different access} that $\zeta$ and $\xi$ are inequivalent planar embeddings of $K$.
\end{proof}


\begin{figure}
\centering
\includegraphics[width=13cm]{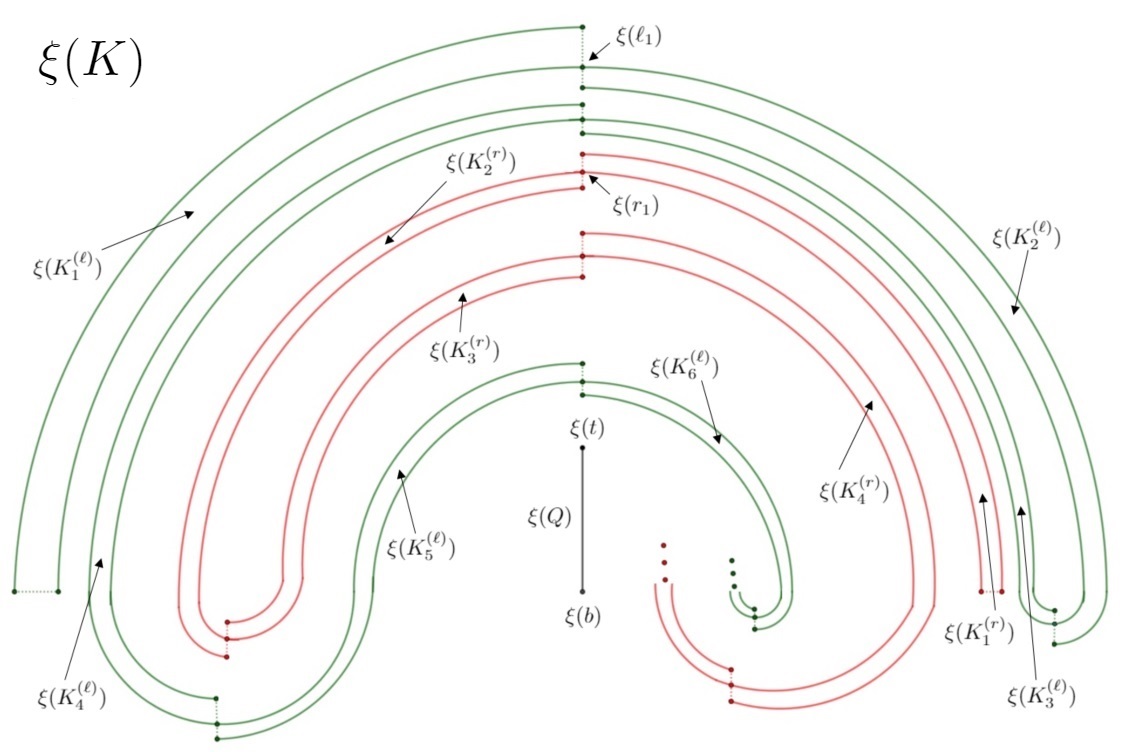}
\caption{A type-1 planar embedding of $K$ about the interior layer of continuity $Q$.  Note that $\xi(b)$ will remain accessible from the complement of $\xi(K)$, but $\xi(t)$ will not.}
\label{fig:1-type K}
\end{figure}

\begin{figure}
\centering
\includegraphics[width=13cm]{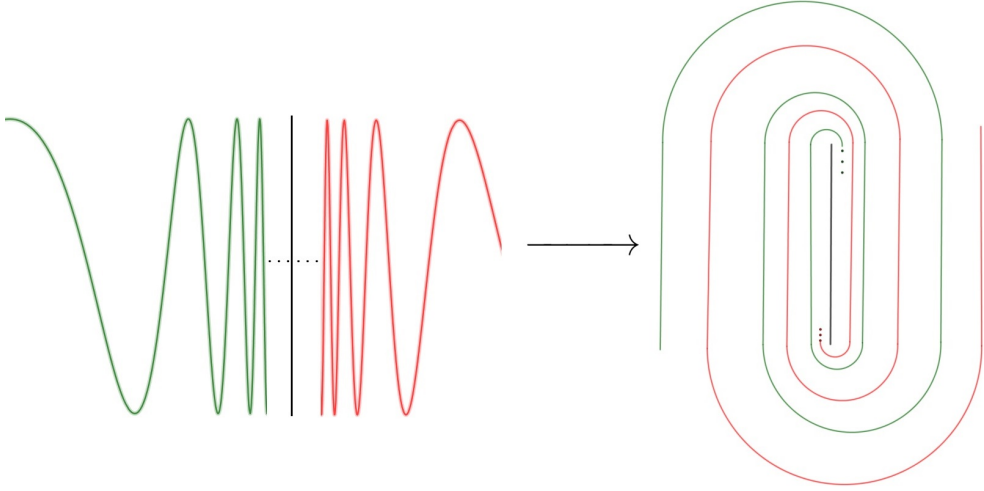}
\caption{As in Figure \ref{fig:Type-0}, we have a double $\sin(1/x)$-curve with two rays on opposite sides of a single limiting arc being re-embedded in the plane, this time so that both limiting arcs "bury" the top endpoint of the limiting arc, leaving only one point---the bottom endpoint---accessible from the complement.  This embedding stands as a model for the type-1 planar embedding $\xi$ of $K$ shown in Figure \ref{fig:1-type K}.}
\label{fig:Type-1}
\end{figure}

\subsection{A Collection of $\mathfrak{c}$-many Mutually Inequivalent Planar Embeddings of $K$}

\

We will now construct the $\mathfrak{c}$-many mutually inequivalent planar embeddings of $K$, all of which do not preserve the accessibility of points in $K$ which are accessible from the complement of the standard embedding of $K$. For each such embedding, we will make use of a fixed sequence $\mathcal{Q} = (Q_1, Q_2, Q_3, \ldots)$ of interior layers of continuity of $K$ converging to the left end layer $E$ of $K$ as mentioned in the first paragraph of this section.  For convenience, we may assume that the $Q_i$'s are in order from right to left. That is, if $g$ is a Kuratowski map of $K$ such that $g(E) = \{0\}$, then $g(Q_{i+1}) < g(Q_i)$ for each $i \in \N$.

For each $i \in \N$, let $L_i$ be either a $V$-layer or $\Lambda$-Layer of $K$ in between $Q_i$ and $Q_{i+1}$, and designate $L_0$ to be the right end-layer, $E'$, of $K$.  Also for each $i \in \N$, decompose $[r(L_i), Q_i)$ and $(Q_i, \ell(L_{i-1})]$ in the same way as the decomposition of $[E,Q)$ and $(Q,E']$ as given at the beginning of this section.

Let $A = (a_1, a_2, a_3, \ldots)$ be a sequence so that for each $i \in \N$, $a_i = 0$ or $a_i = 1$.  That is, $A$ is a sequence of 0's and 1's.  For each $i \in \N$, let $a_i$ be assigned to the layer $Q_i$.  If $a_i = 0$, replace $[r(L_i), \ell(L_{i-1})]$ (which is homeomorphic to $K$) with a type-0 embedding of $[r(L_i), \ell(L_{i-1})]$ about $Q_i$ by using the aforementioned decompositions of $[r(L_i), Q_i)$ and $(Q_i, \ell(L_{i-1})]$.  If $a_1 = 1$, replace $[r(L_i), \ell(L_{i-1})]$ with a type-1 embedding of $[r(L_i), \ell(L_{i-1})]$ about $Q_i$, again by using the aforementioned decompositions of $[r(L_i), Q_i)$ and $(Q_i, \ell(L_{i-1})]$.  Furthermore, make all such replacements be so that their images converge to $E$ as $i \to \infty$, resulting in the image of a planar embedding of $K$.  We will call such an embedding a \textbf{type-$A$ planar embedding of $K$ about $\mathcal{Q}$}, or just a \textbf{type-$A$ embedding} for short.

Let $\mathcal{Z}$ be the collection of all sequences of 0's and 1's.

\begin{lemma}
Let $A = (a_1, a_2, a_3, \ldots)$ and $B = (b_1, b_2, b_3, \ldots)$ be nonidentical sequences in $\mathcal{Z}$ and let $\alpha$ and $\beta$ denote type-$A$ and type-$B$ embeddings of $K$ about $\mathcal{Q}$, respectively.  Then $\alpha$ and $\beta$ are inequivalent planar embeddings of $K$. 
\end{lemma}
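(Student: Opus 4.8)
The plan is to argue by contradiction: suppose $h$ is a homeomorphism of the plane carrying $\alpha(K)$ onto $\beta(K)$. The composite $\psi = \beta^{-1} \circ h \circ \alpha$ is then a homeomorphism of $K$ onto itself, and by Corollary \ref{like layers} it carries layers of continuity to layers of continuity and $\Lambda/V$-layers to $\Lambda/V$-layers. The first key step is to show that $\psi$ must fix the sequence $\mathcal{Q}$ (as a set, possibly reversing its order) and in fact fix each $Q_i$ individually once orientation is pinned down. This should follow from Proposition \ref{preserve order}: $\psi$ preserves the betweenness order of top layers, so it maps the sequence $(Q_i)$ converging to $E$ either to a sequence converging to $E$ or to one converging to $E'$; but by Lemma \ref{homeos of K}, $\psi(\{E,E'\}) = \{E,E'\}$, and since each $Q_i$ is distinguished among interior continuity layers by the combinatorial pattern of the re-embedding around it (in particular, the $Q_i$ are exactly the continuity layers that were ``straightened'' while the surrounding material was folded), $\psi$ must permute $\{Q_i\}$ monotonically. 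Composing with a reflection of the plane if necessary, I may assume $\psi(Q_i) = Q_i$ for all $i$.

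The second, and main, step is the local accessibility argument at a coordinate where $A$ and $B$ differ. Since $A \neq B$, fix $n$ with $a_n \neq b_n$; say $a_n = 0$ and $b_n = 1$. Then in $\alpha(K)$, the layer $\alpha(Q_n)$ has \emph{no} accessible points (type-0 embedding about $Q_n$), whereas in $\beta(K)$, the layer $\beta(Q_n)$ has exactly one accessible point, namely the image of one of its endpoints (type-1 embedding about $Q_n$). Now $h(\alpha(Q_n)) = \beta(\psi(Q_n)) = \beta(Q_n)$. But a planar homeomorphism carries accessible points of a continuum to accessible points of the image: if $x$ is accessible in $\alpha(K)$ via an arc $J$ with $J \cap \alpha(K) = \{x\}$, then $h(J)$ is an arc meeting $\beta(K)$ only in $h(x)$, so $h(x)$ is accessible in $\beta(K)$, and symmetrically for $h^{-1}$. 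Hence the set of accessible points of $\beta(Q_n)$ is the $h$-image of the set of accessible points of $\alpha(Q_n)$; the former is a single point and the latter is empty, a contradiction. (This is essentially Proposition \ref{different access} applied with $x$ the point of $Q_n$ that $\beta$ makes accessible: $\beta(x)$ is accessible while $\alpha(x)$ is not, so $\alpha$ and $\beta$ are inequivalent.)

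I expect the genuine obstacle to be the first step --- verifying that any self-homeomorphism of $K$ underlying a hypothetical plane homeomorphism must actually fix each $Q_i$ rather than shuffling the continuity layers among themselves. Proposition \ref{preserve order} controls betweenness, and Lemma \ref{homeos of K} controls the end layers, but one still needs that the specific sequence $\mathcal{Q}$ is recognizable intrinsically. The cleanest route: note that $\mathcal{Q}$ was chosen as a sequence of continuity layers converging to $E$, and under \emph{any} type-$A$ embedding the layers $Q_i$ are precisely the continuity layers at which the embedding is ``pinched'' --- but this is an extrinsic (embedding-dependent) property, not a topological invariant of $K$. So instead I would phrase the conclusion purely in terms of accessibility: I do not need $\psi$ to fix every $Q_i$; I only need that $\psi$ carries \emph{some} continuity layer to $Q_n$, and then track which one. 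Concretely, let $Q = \psi^{-1}(Q_n)$. Since $\psi$ respects betweenness and $\psi(\{E,E'\}) = \{E,E'\}$, and since the $Q_i$ accumulate only at $E$, the layer $Q$ is one of the $Q_m$ (for the appropriate $m$, matching positions from whichever end). If one also arranges the sequence $\mathcal{Q}$ so that the combinatorial ``address'' of $Q_m$ between consecutive $V/\Lambda$-layers is preserved by betweenness-respecting maps --- which it is, because $\psi$ sends $L_m$ and $L_{m-1}$ to $V/\Lambda$-layers flanking $\psi(Q_m)$ --- one gets $m = n$. Then $h(\alpha(Q)) = \beta(Q_n)$ with $Q = Q_n$ after the orientation normalization, and the accessibility count above finishes the argument. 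The bookkeeping of ``counting positions from the correct end'' after a possible reflection is the only delicate point, and it is entirely combinatorial.
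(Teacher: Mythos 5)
Your endgame is the paper's: once $h(\alpha(Q_n)) = \beta(Q_n)$ at an index with $a_n \neq b_n$, the type-0 versus type-1 accessibility count (Proposition \ref{different access}, Proposition \ref{zeta not xi}) yields the contradiction. The genuine gap is exactly where you suspected it, and your proposed repair does not close it. Passing to the intrinsic self-homeomorphism $\psi = \beta^{-1}\circ h\circ\alpha$ of $K$ and trying to recognize $\mathcal{Q}$ inside $K$ cannot succeed: ``the $Q_i$ accumulate only at $E$'' does not force $\psi^{-1}(Q_n)$ to lie in $\mathcal{Q}$, since $K$ has continuum many interior layers of continuity and $\mathcal{Q}$ is only a countable subfamily of them; likewise ``flanked by the $V/\Lambda$-layers $L_m, L_{m-1}$'' carries no information, because every continuity layer is flanked arbitrarily closely by infinitely many $V/\Lambda$-layers (the $L_i$ were chosen arbitrarily, one among infinitely many between consecutive $Q_i$'s). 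Moreover, the self-similar decomposition $K = K_1\cup K_2\cup\cdots\cup E$ (consecutive copies of $K$ wedged at vertices) gives $K$ order-preserving self-homeomorphisms that slide whole blocks of layers past one another, so no argument using only Proposition \ref{preserve order} and Lemma \ref{homeos of K} can pin the images of the $Q_i$ inside $\mathcal{Q}$; your first instinct, that this property is embedding-dependent, was correct, but abandoning it was the wrong move.

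What you are missing is that no intrinsic characterization is needed: $h$ is a homeomorphism of the plane, so it preserves accessibility, and the $Q_i$ are recognizable \emph{in the embedded images}. In $\alpha(K)$ (and in $\beta(K)$) the layers $\alpha(Q_i)$ are exactly the interior layers of continuity having at least one inaccessible endpoint; every other interior continuity layer keeps both endpoints accessible under a type-$A$ (type-$B$) embedding, and the end layers are handled by Lemma \ref{homeos of K}. Hence, by Corollary \ref{like layers} together with preservation of accessibility, $h(\alpha(Q_i)) = \beta(Q_{j(i)})$ for some $j(i)$, and Proposition \ref{preserve order} (applied to $h$ and to $h^{-1}$) forces $j(i) = i$. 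After that, your accessibility count at an index where $a_n \neq b_n$ --- equivalently the paper's appeal to Proposition \ref{zeta not xi} applied to $h$ restricted to $\alpha([r(L_n),\ell(L_{n-1})])$ --- finishes the proof. This is precisely the paper's argument; the tool you needed (accessibility as a plane-homeomorphism invariant) is the one you already deploy in your second step, just applied one step earlier.
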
\label{inequivalent embeddings weak}
\begin{proof}
Suppose $h$ is a homeomorphism of the plane onto itself so that $h(\alpha(K)) = \beta(K)$.  By Corollary \ref{like layers}, it follows that $h(\alpha(Q_i))$ is a layer of continuity of $\beta(K)$ for each $i \in \N$.  Since for each $i \in \N$, $\alpha(Q_i)$ has all but at least one endpoint inaccessible from the complement of $\alpha(K)$, it follows that $h(\alpha(Q_i))$ must be be a $\beta(Q_{j(i)})$ for some $j(i) \in \N$.  Furthermore, by Proposition \ref{preserve order}, we have $j(i) = i$ for each $i \in \N$.  Since $A$ and $B$ are nonidentical, there is a $k \in \N$ such that $a_k \neq b_k$, in which case $h(\alpha(Q_k)) = \beta(Q_k)$ and thus, $h(\alpha([r(L_k),\ell(L_{k-1})])) = \beta([r(L_k),\ell(L_{k-1})])$.  However, this contradicts Proposition \ref{zeta not xi} since $\alpha\upharpoonright{[r(L_k),\ell(L_{k-1})]}$ is a type-0 embedding about $Q_k$ and $\beta\upharpoonright{[r(L_k),\ell(L_{k-1})]}$ is a type-1 embedding about $Q_k$.
\end{proof}

\begin{theorem}\label{c many embeddings weak}
There exist $\mathfrak{c}$-many mutually inequivalent planar embeddings of $K$.
\end{theorem}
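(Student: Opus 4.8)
The plan is to read off the theorem as an immediate consequence of Lemma \ref{inequivalent embeddings weak} together with the construction of type-$A$ embeddings in Subsection 2.2. First I would fix once and for all the sequence $\mathcal{Q} = (Q_1, Q_2, Q_3, \ldots)$ of interior layers of continuity of $K$ converging to $E$, the auxiliary sequence $(L_0, L_1, L_2, \ldots)$ of $V$- or $\Lambda$-layers with $L_0 = E'$, and the decompositions of each $[r(L_i), Q_i)$ and $(Q_i, \ell(L_{i-1})]$ as described there. For each sequence $A \in \mathcal{Z}$ of $0$'s and $1$'s, the construction then furnishes a type-$A$ planar embedding of $K$ about $\mathcal{Q}$; choose one such embedding and call it $\alpha_A$. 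Set $\Phi = \{\alpha_A \mid A \in \mathcal{Z}\}$.

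Next I would invoke Lemma \ref{inequivalent embeddings weak}: if $A$ and $B$ are distinct members of $\mathcal{Z}$, then $\alpha_A$ and $\alpha_B$ are inequivalent planar embeddings of $K$. In particular $\alpha_A \neq \alpha_B$ whenever $A \neq B$ (equal embeddings are trivially equivalent, via the identity homeomorphism of the plane), so the assignment $A \mapsto \alpha_A$ is injective. Hence $|\Phi| = |\mathcal{Z}|$. Since $\mathcal{Z}$ is the set of all functions $\N \to \{0,1\}$, we have $|\mathcal{Z}| = 2^{\aleph_0} = \mathfrak{c}$, and by the previous sentence $\Phi$ is a collection of mutually inequivalent planar embeddings of $K$ of cardinality $\mathfrak{c}$. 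This already proves the theorem. If desired one can also remark that $K$ is separable and a planar embedding is in particular a continuous map $K \to \R^2$, so there are at most $\mathfrak{c}$ planar embeddings of $K$ in total; thus $\Phi$ realizes the maximal possible cardinality.

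There is essentially no obstacle in this final step — all the substantive work (defining type-$0$ and type-$1$ embeddings, checking that replacing each $[r(L_i), \ell(L_{i-1})]$ by the appropriate type and letting the images converge to $E$ yields a genuine embedding of $K$, and the rigidity argument using Corollary \ref{like layers}, Proposition \ref{preserve order}, and Proposition \ref{zeta not xi} in the proof of Lemma \ref{inequivalent embeddings weak}) has already been carried out. The only thing to be careful about is the bookkeeping that makes the map $A \mapsto \alpha_A$ well defined and injective, i.e. that for each $A$ a type-$A$ embedding actually exists and that distinct sequences give distinct (indeed inequivalent) embeddings; both points are exactly what the preceding subsection and lemma supply.
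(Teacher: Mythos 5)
Your proposal is correct and follows essentially the same route as the paper: fix $\mathcal{Q}$ and the auxiliary data, take one type-$A$ embedding for each $A \in \mathcal{Z}$, apply Lemma \ref{inequivalent embeddings weak} for mutual inequivalence, and use $|\mathcal{Z}| = \mathfrak{c}$. The added remark that $\mathfrak{c}$ is also an upper bound is a harmless bonus not present in the paper.
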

\begin{proof}
Let $\mathcal{E}$ be the collection of $A$-type embeddings of $K$ about $\mathcal{Q}$ for each $A \in \mathcal{Z}$.  By Lemma \ref{inequivalent embeddings weak}, members of $\mathcal{E}$ are pairwise inequivalent.  Therefore, $|\mathcal{E}| = |\mathcal{Z}|$.  Since it is well known that $|\mathcal{Z}| = \mathfrak{c}$, it follows that $|\mathcal{E}| = \mathfrak{c}$.
\end{proof}

Theorem \ref{c many embeddings weak} gives greater insight to Question 6 in \cite{Anusic2}, providing an example of an HDCC having uncountably many, and, in fact, $\mathfrak{c}$-many mutually inequivalent planar embeddings which is not an HDCC containing a dense ray.  Furthermore, $K$ is an HDCC satisfying this property while having no subcontinuum containing a dense ray.  However, these embeddings fail to preserve the accessibility of all points which are accessible in the image of the standard planar embedding of $K$.

\section{Embeddings of $K$: Endpoints of All Layers Accessible}

In this section, we construct a collection of $\mathfrak{c}$-many mutually inequivalent planar embeddings of $K$, each of whose image has the same set of accessible points as the image of the standard embedding of $K$. That is, under each such embedding, the image of every point of each end layer, $V$-layer, and $\Lambda$-layer will be accessible, and the image of both endpoints of each interior layer of continuity will also be accessible. Before proceeding, we must first provide the following definition and lemma.  

\begin{definition}\label{inequivalent sequences}
Let $\mathbf{N}$ and $\mathbf{M}$ be sequences of positive integers.  We say that $\mathbf{N}$ and $\mathbf{M}$ are inequivalent if and only if, after removing any finite initial subsequence of $\mathbf{N}$ and any finite initial subsequence of $\mathbf{M}$, the remaining sequences $\mathbf{N}'$ and $\mathbf{M}'$ are not identical.
\end{definition}

\begin{lemma}\label{uncountable sequences}
There exist $\mathfrak{c}$-many mutually inequivalent sequences of positive integers.
\end{lemma}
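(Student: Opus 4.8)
The plan is to exhibit an explicit injection from an uncountable set into the collection of equivalence classes of sequences of positive integers, where two sequences are identified exactly when they agree after deleting finite initial segments (Definition \ref{inequivalent sequences}). The cleanest source of an uncountable family is $2^{\mathbb{N}}$, the set of all $0$--$1$ sequences, which has cardinality $\mathfrak{c}$. First I would fix a device that converts a $0$--$1$ sequence into a sequence of positive integers in a way that is sensitive to \emph{all} tails, not just cofinitely many terms. A convenient choice: given $s = (s_1, s_2, s_3, \ldots) \in 2^{\mathbb{N}}$, define $\mathbf{N}(s)$ to be the sequence obtained by recording, block by block, a ``marker'' value followed by a run of values encoding $s_n$; concretely, let $\mathbf{N}(s)$ list the integers $1, (2 + s_1), 1, (2 + s_2), 1, (2 + s_3), \ldots$, i.e. alternate the value $1$ with the value $2$ or $3$ according to whether $s_n$ is $0$ or $1$. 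The point of interleaving the constant $1$'s is that the positions of the $1$'s are rigidly determined (they are exactly the odd-indexed positions), so the only ambiguity a finite initial deletion can introduce is a \emph{shift}, and one shows a shift cannot turn one such sequence into another.

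The key steps, in order, are: (1) define the map $s \mapsto \mathbf{N}(s)$ from $2^{\mathbb{N}}$ to sequences of positive integers as above; (2) suppose $\mathbf{N}(s)$ and $\mathbf{N}(s')$ are equivalent, meaning there are $j, k \ge 0$ such that deleting the first $j$ terms of $\mathbf{N}(s)$ and the first $k$ terms of $\mathbf{N}(s')$ yields identical sequences; (3) use the pattern of $1$'s to force $j$ and $k$ to have the same parity, hence the tails are aligned on blocks, which forces $2 + s_n = 2 + s'_{n + m}$ for all large $n$ and a fixed offset $m$; (4) argue $m = 0$: since the surviving tail of $\mathbf{N}(s)$ again consists of alternating $1$'s and block values in the original order, and likewise for $s'$, an alignment that shifts by a nonzero number of whole blocks would require $s$ and $s'$ to be shifts of one another \emph{and} the deleted prefixes to match up, but by taking $s$ and $s'$ to range over representatives of distinct orbits — or more simply, by noting each $s$ is recoverable from any tail of $\mathbf{N}(s)$ together with how much was deleted — one deduces $s = s'$; (5) conclude the map descends to an injection on equivalence classes, so there are at least $\mathfrak{c}$ classes, and trivially at most $\mathfrak{c}$ since there are only $\mathfrak{c}$ sequences of positive integers in total. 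By Cantor--Schröder--Bernstein (or directly), the number of mutually inequivalent sequences is exactly $\mathfrak{c}$.

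I expect step (4) to be the main obstacle: one must rule out the degenerate scenario in which $\mathbf{N}(s)$, after a long deletion, happens to coincide with $\mathbf{N}(s')$ because both sequences are eventually periodic or because $s'$ is $s$ translated. The honest fix is to not use \emph{all} of $2^{\mathbb{N}}$ but a sufficiently ``spread out'' subfamily — for instance, the family $\{s^{(x)} : x \in (0,1)\}$ where $s^{(x)}$ is the binary expansion of $x$ with a $1$ inserted before the $n$-th digit block of growing length $n$, so that the lengths between successive ``long runs'' grow and thus pin down the alignment uniquely; any two such sequences that agree on a tail must agree on-the-nose and hence come from the same $x$. Since $(0,1)$ has cardinality $\mathfrak{c}$, this still gives $\mathfrak{c}$-many classes, completing the proof. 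Alternatively, and perhaps most economically, one observes that the equivalence of Definition \ref{inequivalent sequences} has each class countable (a sequence has only countably many finite initial deletions, and only countably many sequences can be equivalent to a given one), so the $\mathfrak{c}$-many sequences of positive integers partition into $\mathfrak{c}$-many countable classes, immediately yielding $\mathfrak{c}$-many mutually inequivalent ones; this cardinality argument sidesteps the combinatorial bookkeeping entirely.
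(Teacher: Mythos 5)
Your closing ``alternative'' paragraph is, in substance, exactly the paper's proof: the paper fixes a sequence $A$, observes that the collection of sequences equivalent to $A$ is a countable union of countable sets (an equivalent sequence is just an arbitrary finite prefix of positive integers attached to one of the countably many tails of $A$), and then deduces from $\mathfrak{c} = \kappa \otimes \aleph_0$ (citing Kunen and Sierpi\'nski) that the number $\kappa$ of equivalence classes is $\mathfrak{c}$. So that paragraph alone is a complete and correct proof, and it is the route the paper takes. Your primary route---encoding $s \in 2^{\mathbb{N}}$ as the interleaved sequence $(1,\,2+s_1,\,1,\,2+s_2,\ldots)$ and trying to make $s \mapsto [\mathbf{N}(s)]$ injective on classes---is genuinely different, and the obstacle you flag at step (4) is real: if $s'$ is a shift of $s$, then $\mathbf{N}(s')$ arises from $\mathbf{N}(s)$ by deleting an initial block of even length, so all members of a shift orbit collapse to a single class and the map as defined is not injective. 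Either of your fixes repairs this (a rigid ``spread-out'' subfamily pins down the alignment), but note that even the naive encoding suffices if you observe that each fiber of $s \mapsto [\mathbf{N}(s)]$ is countable---an observation that is just the paper's counting argument in disguise. What the explicit coding buys is a concrete $\mathfrak{c}$-sized family of pairwise inequivalent representatives; what the counting argument buys is brevity and freedom from alignment bookkeeping, which is why the paper (and your last paragraph) settle the lemma that way.
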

\begin{proof}
Let $\mathcal{N}$ denote the set of all sequences of positive integers, and let $A = (a_1, a_2, a_3, \ldots) \in \mathcal{N}$.  We shall inductively define the sets $A_n$ so that $A_1 = \{A\}$, and for every integer $n > 1$, 
$$A_n = \{(s_1, s_2, s_3, \ldots) \in \mathcal{N} \mid (s_n, s_{n+1}, s_{n+2}, \ldots) = (a_n, a_{n+1}, a_{n+2}, \ldots)\}.$$  
Note that $A_n$ is countable for every $n =  1, 2, \ldots$. Let $\mathcal{A} = \bigcup_{n = 1}^\infty A_n$, which forms an equivalence class of all sequences of positive integers equivalent to $A$. Then $\mathcal{A}$ is also countable as it is the countable union of countable sets.  Denote by $\mathcal{I}$ the set of all such equivalence classes, $\mathcal{A}$.

Since each equivalence class in $\mathcal{I}$ is countable and $\bigcup \mathcal{I} = \mathcal{N}$ which has cardinality $\mathfrak{c}$, it follows that the cardinality of $\mathcal{I}$ is a cardinal $\kappa$ satisfying $\mathfrak{c} = \kappa \otimes \aleph_0$, where $\otimes$ here denotes cardinal multiplication.  By Corollary 10.13 of \cite{Kunen}, as well as Section IX.6 of \cite{Sierpinski}, it follows that $\kappa = \mathfrak{c}$.
\end{proof}

\subsection{Constructing Schema Embeddings of $K$}

\

Let $\mathbf{N} = (n_1, n_2, n_3, \ldots)$ be a sequence of positive even integers greater than or equal to 4. We will construct a planar embedding $\psi_\mathbf{N}$ of $K$ based on a subsequently defined set of instructions for geometrically altering parts of the standard embedding of $K$. To do so, we must first define a decomposition of each $K_i$, an example of which is shown in Figure \ref{fig:8 pieces}. For each $i \in \N$, let $K_i \subset K$ be a homeomorphic copy of $K$ so that the right end layer of $K_1$ is the right end layer of $K$, so that $K_i \cap K_j \neq \emptyset$ if and only if $|i-j| \leq 1$, and so that $K_i \cap K_{i+1} = \{p_i\}$ is a vertex of a $V$-layer of $K$.  Also for each $i \in \N$, let $K_i^{(1)}, \ldots, K_i^{(2n_i)}$ be a decomposition of $K_i$ so that 
    \begin{itemize}
        \item[($1_{K_i}$.)] $K_i^{(l)}$ is a homeomorphic copy of $K$,
        \item[($2_{K_i}$.)] the right end layer of $K_i^{(1)}$ is the right end layer of $K_i$ and the left end layer of $K_i^{(2n_i)}$ is the left end layer of $K_i$,
        \item[($3_{K_i}$.)] $K_i^{(l)} \cap K_i^{(m)} \neq \emptyset$ if and only if $|l-m| \leq 1$,
        \item[($4_{K_i}$.)] $K_i^{(l)} \cap K_i^{(l+1)} = \{s_i^{(l)}\}$ is the vertex of a $\Lambda$-layer of $K$ when $l$ is odd and is the vertex of a $V$-layer when $l$ is even.
    \end{itemize}

\begin{figure}
\centering
\includegraphics[width=15cm]{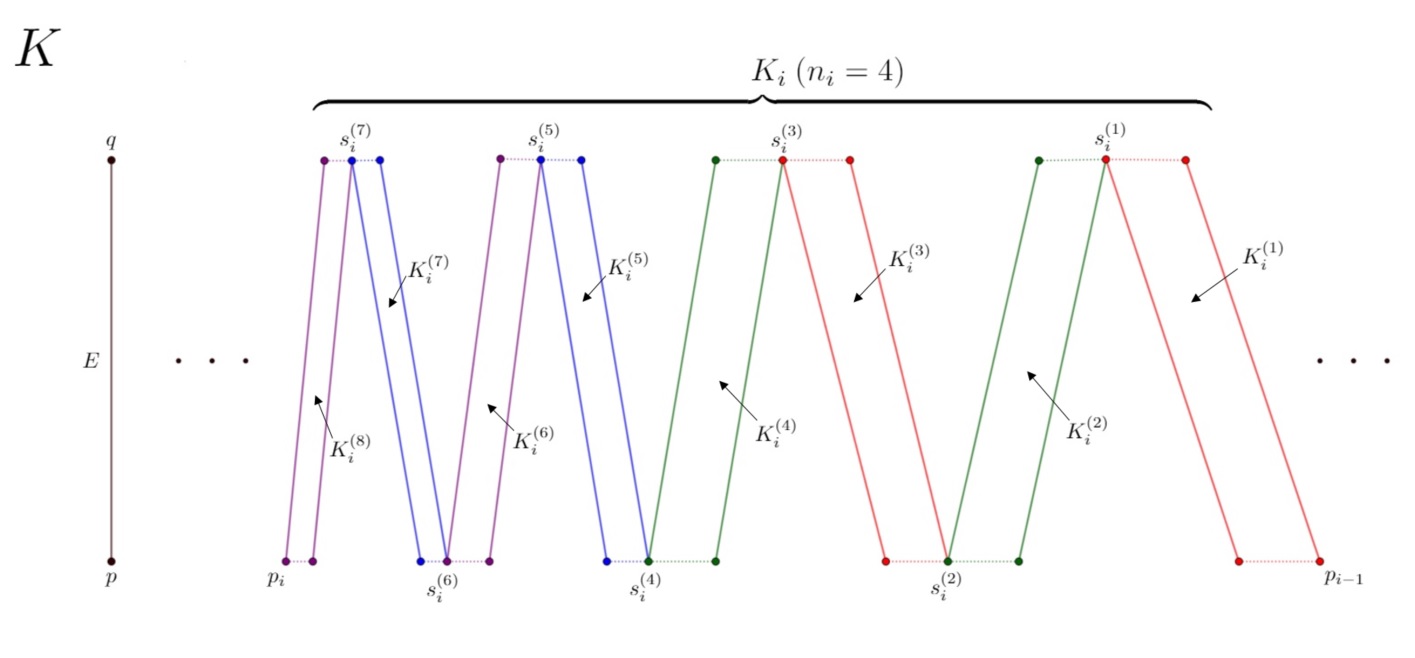}
\caption{$K_i$, where $n_i = 4$, decomposed into $2n_i = 8$ pieces each homeomorphic to $K$.}
\label{fig:8 pieces}
\end{figure}


Again, the above decomposition of $K_i$ is depicted in Figure \ref{fig:8 pieces}.
Recall that in section 1, we let $P$ denote the set of all points in the standard embedding of $K$ whose $y$-coordinates are 0, and we let $Q$ denote the set of all points in the standard embedding of $K$ whose $y$-coordinates are 1. That is, $P$ is the set of all bottom endpoints and vertices of layers of $K$ while $Q$ is the set of all top endpoints and vertices of layers of $K$. Each of the next constructed embeddings will have the property that $P$ is mapped below the line $y = 1/2$ and that $Q$ is mapped above the line $y = 1/2$. We shall thus refer to the line $y = 1/2$ as \textit{the critical line} of each of the following embeddings.

We will inductively define, for each $i \in \N$, a list of planar embeddings of $K_i$, whose images are resembled in Figure \ref{fig:subschema}, as follows.  First, embed $K_1$ in the $xy$-plane where $x > 0$ so that the endpoints and vertices of layers $K_1$ are contained outside of the critical line, with the top endpoints and vertices of $K_1$ in the part above the critical line, and the bottom endpoints and vertices in the part below the critical line, with every arc that is straight in $K_1$ kept straight under this embedding.  We then change this embedding as follows.  Reflect, through ambient three-dimensional space, $K_1^{(n_1)} \cup \cdots \cup K_1^{(2n_1)}$, about the point $s_1^{(n_1-1)}$ where $K_1^{(n_1 - 1)}$ and $K_1^{(n_1)}$ intersect, in such a way that the point where $s_1^{(n_1 + k)}$
is vertically collinear with $s_1^{(n_1 - k)}$ for every $k = 1, 2, \ldots, n_1 - 1$.\footnote{By \textit{vertically collinear}, we mean that these points lie on the same vertical line in the $xy$-plane.} Moreover, this is done while keeping top and bottom endpoints and vertices of layers of $K_1$ respectively above and below the critical line and keeping straight under this re-embedding of $K_1$ every arc that is straight in the standard embedding of $K_1$, smoothly stretching and squeezing where needed.
We shall let this re-embedding of $K_1$ be named $\psi_{\mathbf{N},1}$.

Assume now that for some $i \in \N$, we have constructed $\psi_{\mathbf{N},j}$ for every $j = 1, \ldots, i$. We construct $\psi_{\mathbf{N},i+1}$ somewhat similar to the way $\psi_{\mathbf{N},1}$ was constructed, this time so that $\psi_{\mathbf{N},i+1}(K_{i+1})$ meets with $\psi_{\mathbf{N},i}(K_i)$ at only $\psi_{\mathbf{N},i}(p_i)$ with $K_{i+1}^{(1)}$ being smoothly stretched by $\psi_{\mathbf{N},i+1}$ below and to the left of $\psi_{\mathbf{N},i}(K_i)$, with the rest of $\psi_{\mathbf{N},i+1}(K_{i+1})$ placed between the $y$-axis and $\psi_{\mathbf{N},i+1}(K_{i+1}^{(1)}) \cup \bigcup_{j=1}^i \psi_{\mathbf{N},j}(K_j)$.  In doing so, the only arcs that are straight in the standard embedding of $K_{i+1}$ being kept straight under $\psi_{\mathbf{N},i+1}$ are those contained in $K_{i+1}^{(2)}, \ldots, K_{i+1}^{(2n_{i+1})}$. Furthermore, we make sure that $\psi_{\mathbf{N},i}(K_i)$ converges to the line segment $\{0\} \times [0,1]$ as $i \to \infty$, doing so in a way that the resulting function $\psi_\mathbf{N}: K \to \psi_\mathbf{N}(K)$, as given in the following definition, is a homeomorphism, and thus, a planar embedding of $K$.

\begin{definition}\label{schema Embedding}
Given the elements in the construction above, the \textbf{schema embedding of $K$ according to $\mathbf{N}$}, denoted as $\psi_\mathbf{N}$, is the planar embedding of $K$ whose image is given by
$$\psi_\mathbf{N}(K) = \cl\Big(\bigcup_{i=1}^\infty \psi_{\mathbf{N},i}(K_i)\Big) = (\{0\} \times [0,1]) \cup \bigcup_{i=1}^\infty \psi_{\mathbf{N},i}(K_i),$$
where the left end layer $E$ of $K$ is mapped onto the line segment $\{0\} \times [0,1]$ so that the bottom endpoint $p$ of $E$ is mapped to $(0,0)$ and the top endpoint $q$ of $E$ is mapped to $(0,1)$. That is, $\psi_\mathbf{N}$ is defined by 
\[ \psi_\mathbf{N}(x) = \begin{cases} 
          \psi_{\mathbf{N},i}(x) & \textnormal{if } x \in K_i \\
          x & \textnormal{if } x \in E
       \end{cases}
    \]
\end{definition}

A depiction showing $\psi_\mathbf{N}(K)$ with $\psi_{\mathbf{N},i}(K_i)$, where $n_i = 4$, is given in Figure \ref{fig:subschema}. Such embeddings somewhat mimic the type of embeddings of the $\sin(1/x)$-curve portrayed in Figure \ref{fig:redtopsine}. It is worth noting that although $\psi_\mathbf{N}(K_i^{(1)})$ is bent for each $i \in \N$, these bends become less profound as $i$ increases.  More precisely, for every $\epsilon > 0$ there exists an $N_\epsilon \in \N$ such that for all $i \geq N_\epsilon$ and for every maximally straight arc $A$ contained in the standard embedding of $K_i^{(1)}$, there exists a homeomorphism $h_A$ mapping $\psi_\mathbf{N}(A)$ onto $\pi(\psi_\mathbf{N}(A))$ such that $\|(\pi\upharpoonright\psi_\mathbf{N}(A)) - h_A\| < \epsilon$, where $\| \cdot \|$ is the supremum norm.  Furthermore, recall again that we denoted by $P$ and $Q$ the set of all bottom and top endpoints and vertices of layers of $K$, respectively. Likewise, for each $i \in \N$, denote by $P_i$ and $Q_i$ the set of all bottom and top endpoints and vertices of layers of $K_i$, respectively. We also note that in order to make $\psi_\mathbf{N}$ a homeomorphism onto its image, we ensure that $\psi_\mathbf{N}(P_i) \to \{\psi_\mathbf{N}(p)\} = \{(0,0)\}$ and $\psi_\mathbf{N}(Q_i) \to \{\psi_\mathbf{N}(q)\} = \{(0,1)\}$ as $i \to \infty$.

\begin{proposition}
For every sequence $\mathbf{N}$ of positive even integers greater than or equal to 4, $\psi_\mathbf{N}$ is a homeomorphism of $K$ onto its image. That is, $\psi_\mathbf{N}$ is indeed a planar embedding of $K$.
\end{proposition}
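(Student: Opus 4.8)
The statement asserts that the function $\psi_\mathbf{N}$ of Definition \ref{schema Embedding} is a homeomorphism of $K$ onto its image. The plan is to verify the standard three conditions: $\psi_\mathbf{N}$ is well-defined, it is continuous, and it is a bijection onto its image (whence, since $K$ is compact and the plane is Hausdorff, it is automatically a homeomorphism). First I would check well-definedness: the only overlaps among the pieces are $K_i \cap K_{i+1} = \{p_i\}$ and $K_i \cap E = \emptyset$ (the $K_i$ accumulate on $E$ but do not meet it), so I need only confirm that the inductive construction was carried out so that $\psi_{\mathbf{N},i}(p_i) = \psi_{\mathbf{N},i+1}(p_i)$ — this is exactly the matching condition built into the construction of $\psi_{\mathbf{N},i+1}$ ("$\psi_{\mathbf{N},i+1}(K_{i+1})$ meets with $\psi_{\mathbf{N},i}(K_i)$ at only $\psi_{\mathbf{N},i}(p_i)$"), so the case definition in Definition \ref{schema Embedding} is consistent.

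Next I would establish injectivity. Each $\psi_{\mathbf{N},i}$ is, by construction, a homeomorphism (an embedding) of $K_i$, and $\psi_\mathbf{N}\upharpoonright E$ is the identity, hence injective. So injectivity can fail only through a collision between images of distinct pieces. The construction is explicitly arranged so that $\psi_{\mathbf{N},i+1}(K_{i+1})$ lies between the $y$-axis and $\psi_{\mathbf{N},i+1}(K_{i+1}^{(1)}) \cup \bigcup_{j=1}^{i}\psi_{\mathbf{N},j}(K_j)$, touching $\psi_{\mathbf{N},i}(K_i)$ only at $\psi_{\mathbf{N},i}(p_i)$ and touching earlier pieces nowhere else; and the images $\psi_{\mathbf{N},i}(K_i)$ are pushed toward $\{0\}\times[0,1]$ as $i\to\infty$, so no $\psi_{\mathbf{N},i}(K_i)$ meets $\{0\}\times[0,1] = \psi_\mathbf{N}(E)$. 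Thus for $|i-j|\geq 2$ the images are disjoint, for $|i-j|=1$ they meet only at the shared point $p_i$ (which maps consistently), and no $K_i$-image meets the $E$-image — giving injectivity globally.

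Then I would address continuity. On each $K_i$, $\psi_\mathbf{N}$ restricts to the continuous map $\psi_{\mathbf{N},i}$, and on $E$ it restricts to the identity; since $K = E \cup \bigcup_i K_i$ is a closed cover that is locally finite away from $E$, continuity at points of $\bigcup_i K_i$ is immediate from the pasting lemma on overlapping closed sets (the overlaps being the single points $p_i$, on which the pieces agree). The only delicate points are those of $E$: if $x_k \to x \in E$ with $x_k \notin E$, then $x_k \in K_{i(k)}$ with $i(k)\to\infty$ (since the $K_i$ accumulate only on $E$ and any fixed $K_i$ is a closed set not containing $x$). By the stipulation in the construction that $\psi_{\mathbf{N},i}(K_i) \to \{0\}\times[0,1]$ as $i\to\infty$, together with the finer requirements $\psi_\mathbf{N}(P_i)\to\{(0,0)\}$ and $\psi_\mathbf{N}(Q_i)\to\{(0,1)\}$, one gets that $\psi_\mathbf{N}(x_k)$ has all its limit points on $\{0\}\times[0,1] = \psi_\mathbf{N}(E)$, and a slightly more careful argument matching $y$-coordinates (using that the $y$-coordinate of a point of $K_i$ is controlled by which sub-pieces $K_i^{(l)}$ it lies in, and these map to thin "fingers" reaching appropriate heights) shows $\psi_\mathbf{N}(x_k)\to\psi_\mathbf{N}(x)$. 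I expect this continuity-at-$E$ verification to be the main obstacle, since it is the one place where the hand-waving phrases "converges to the line segment $\{0\}\times[0,1]$" and "in a way that the resulting function is a homeomorphism" must be pinned down; everything else is bookkeeping.

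Finally, with $\psi_\mathbf{N}\colon K \to \psi_\mathbf{N}(K)$ a continuous bijection from a compact space to a Hausdorff space, it is automatically a homeomorphism, so $\psi_\mathbf{N}$ is a planar embedding of $K$, as claimed.
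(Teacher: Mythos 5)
The paper offers no proof of this proposition at all: the requirement that $\psi_\mathbf{N}$ be a homeomorphism is written directly into the construction (``doing so in a way that the resulting function $\psi_\mathbf{N}\colon K \to \psi_\mathbf{N}(K)$ \ldots is a homeomorphism''), with the stipulations $\psi_{\mathbf{N},i}(K_i)\to\{0\}\times[0,1]$, $\psi_\mathbf{N}(P_i)\to\{(0,0)\}$, $\psi_\mathbf{N}(Q_i)\to\{(0,1)\}$ and the figures standing in for an argument. Your outline---consistency at the gluing points $p_i$, injectivity because the image of each new piece meets earlier pieces only at $\psi_{\mathbf{N},i}(p_i)$ and stays in $x>0$ off of $\psi_\mathbf{N}(E)$, continuity on $\bigcup_i K_i$ by pasting along the locally finite closed cover, and the compact-to-Hausdorff upgrade---is therefore more explicit than anything in the paper, and you correctly isolate continuity at points of $E$ as the only substantive issue; that is precisely the step the paper handles by fiat.

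One caution about your sketch of that step: the claim that ``the $y$-coordinate of a point of $K_i$ is controlled by which sub-piece $K_i^{(l)}$ it lies in'' is not right, since each $K_i^{(l)}$ is itself a copy of $K$ spanning the full vertical extent of $K_i$; membership in a sub-piece constrains horizontal position, not height. Moreover, the listed convergence conditions control only the $x$-coordinates of $\psi_\mathbf{N}(x_k)$ and the images of $P_i$ and $Q_i$; they do not by themselves force the $y$-coordinate of $\psi_{\mathbf{N},i(k)}(x_k)$ to converge to that of $x=\lim_k x_k\in E$, because the ``smooth stretching and squeezing'' could distort heights inside each piece by an amount that does not shrink with $i$. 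To make your argument close, one must add (as one clearly can in this construction, since the folding is horizontal) a requirement that the vertical distortion of $\psi_{\mathbf{N},i}$ vanishes as $i\to\infty$, e.g.\ $\sup_{z\in K_i}\,d\big(\pi(\psi_{\mathbf{N},i}(z)),\pi(z)\big)\to 0$; this is the exact content hiding behind the paper's phrase ``in a way that the resulting function is a homeomorphism.'' With such a clause your proof goes through; without it, continuity at interior points of $E$ does not follow from the properties you cite.
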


\begin{figure}
\centering
\includegraphics[width=15cm]{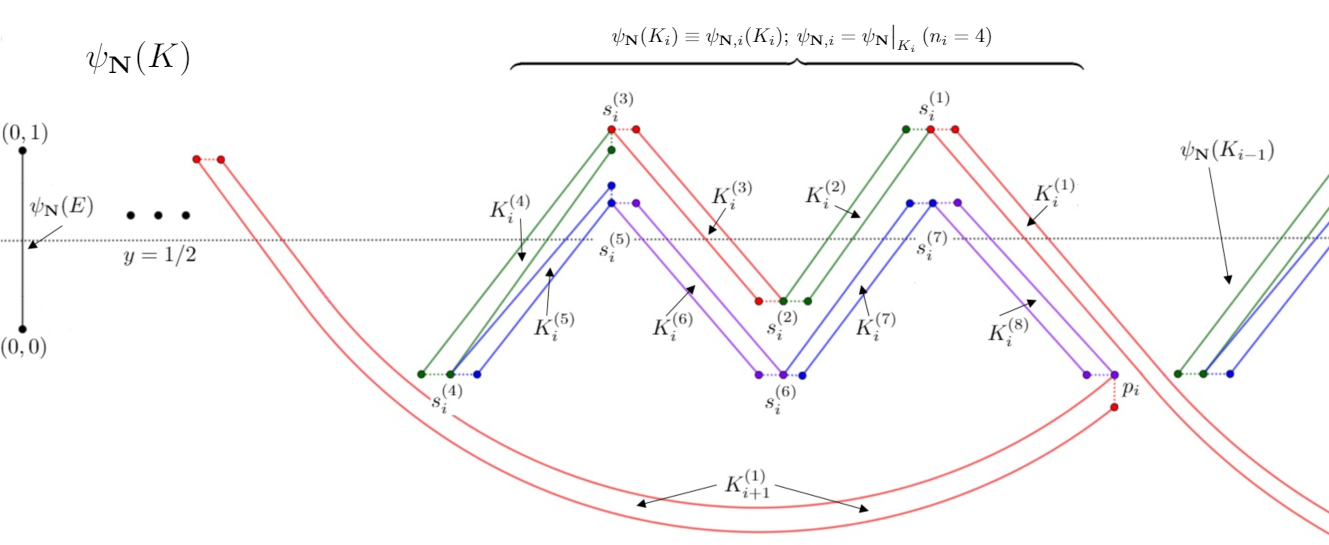}
\caption{A subschema embedding, $\psi_{\mathbf{N},i}(K_i)$, with $n_i = 4$.  Each labeled $K_i^{(l)}$ is really $\psi_{\mathbf{N}}(K_i^{(l)})$ for each $l = 1, 2, \ldots, 8$. Note that we have $\psi_{\mathbf{N},i+1}(K_{i+1})$ meeting $\psi_{\mathbf{N},i}(K_i)$ at the point $\psi_{\mathbf{N},i}(p_i) = \psi_{\mathbf{N},i+1}(p_i)$, simply labeled $p_i$ on the figure.}
\label{fig:subschema}
\end{figure}

\begin{figure}
\centering
\includegraphics[width=8cm]{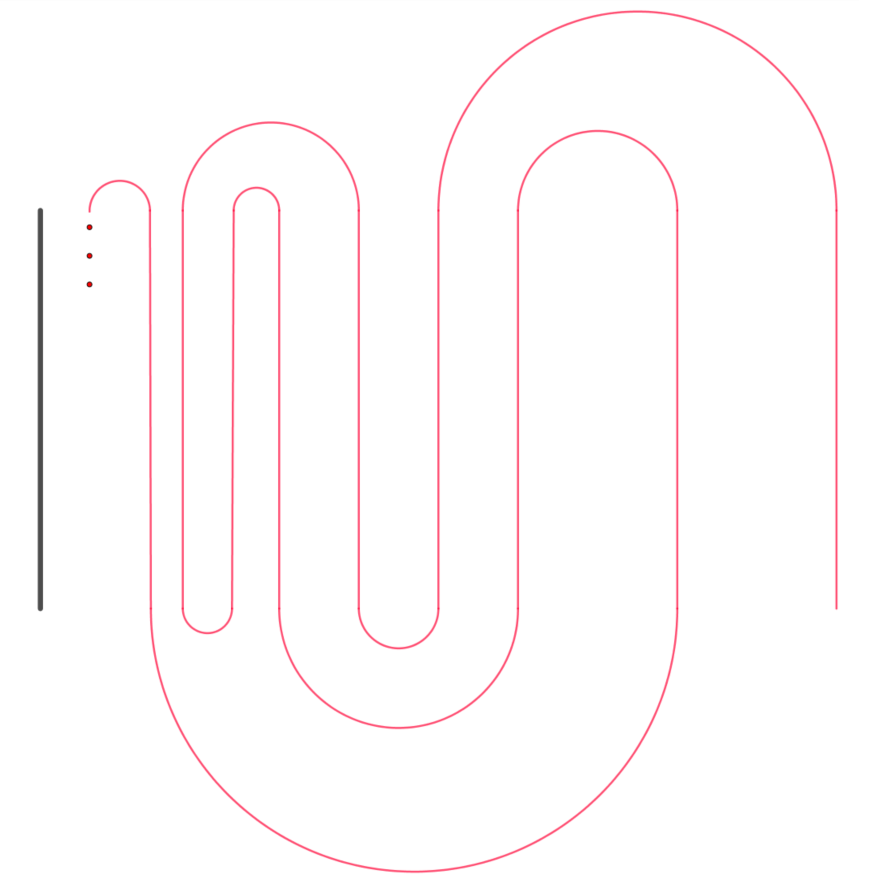}
\caption{The image of an embedding of the $\sin(1/x)$-curve which stands as a crude model for the image of $K$ under a schema embedding $\psi_\mathbf{N}$ according to a sequence $\mathbf{N}$ of positive even integers greater than or equal to 4, with $n_1 =4$.}
\label{fig:redtopsine}
\end{figure}

\begin{remark}\label{remark on n}
If $\mathbf{N} = (n, n, n, \ldots)$ is a sequence of which every term is the same even positive integer, $n$, greater than or equal to 4, then we may denote the schema embedding $\psi_\mathbf{N}$ as $\psi_\mathbf{n}$.  Thus, it should be understood what is meant by, say, the embeddings $\psi_\mathbf{4}$, $\psi_\mathbf{6}$, $\psi_\mathbf{8}$, etc.
\end{remark}

We allow intuition based on the geometric description of the constructions of each $\psi_\mathbf{N}(K)$ to substitute as sketches of proofs for the next two propositions.

\begin{proposition}\label{layers accessible}
Given a sequence $\mathbf{N}$ of positive even integers greater than or equal to 4, all endpoints of layers of $K$ and all points of $V$ and $\Lambda$ layers of $K$ are accessible under $\psi_\mathbf{N}$.
\end{proposition}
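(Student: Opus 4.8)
The plan is to verify accessibility by exhibiting, for each point in question, an explicit endcut in the complement of $\psi_\mathbf{N}(K)$, leveraging the geometric structure of the construction. First I would dispose of the end layers: the left end layer $E$ is mapped to $\{0\}\times[0,1]$, which lies on the boundary of $\psi_\mathbf{N}(K)$ (everything else sits in the region $x>0$), so every point of $E$ is accessible via a short horizontal segment into $x<0$; similarly, $E'$ is the right end layer of $K_1$, which by the construction of $\psi_{\mathbf{N},1}$ is kept straight and placed as the outermost arc on the far side, hence again accessible from the unbounded complementary region.

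Next I would handle a generic $\psi_{\mathbf{N},i}(K_i)$. The key observation is that the reflection-and-stacking construction of $\psi_{\mathbf{N},i}$ was explicitly designed so that top endpoints and vertices stay above the critical line $y=1/2$, bottom endpoints and vertices stay below it, and every arc straight in the standard embedding of $K_i$ (except those inside $K_i^{(1)}$) remains straight. Thus each $V$-layer or $\Lambda$-layer of $K_i$ is mapped to a genuine $\lor$ or $\land$ shape whose vertex pokes across the critical line, and---exactly as in the analysis of the standard embedding quoted in the excerpt---such a layer bounds a ``pocket'' that is open to the complement along its entire length, so all of its points (vertex and both sides) are accessible. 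The interior layers of continuity inside a single $\psi_{\mathbf{N},i}(K_i)$ that are not themselves squeezed between two re-embedded pieces keep their endpoints on the outer boundary of the stack, so those endpoints are accessible by the same pocket argument. The layers straddling the junction points $s_i^{(l)}$ and $p_i$ require noting that, since consecutive $K_i^{(l)}$ meet only at a single vertex of a $V$- or $\Lambda$-layer, the accessibility of those vertices and of the endpoints adjacent to them is inherited from the fact that each piece is embedded on its own side of the junction.

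The main obstacle---and the place where I expect the argument to need the most care---is the pieces $\psi_{\mathbf{N},i}(K_i^{(1)})$, which the construction admits are \emph{bent} rather than straight. Here I would invoke the estimate recorded right before the proposition: for every $\epsilon>0$ there is $N_\epsilon$ so that for $i\ge N_\epsilon$ the bends in $\psi_\mathbf{N}(K_i^{(1)})$ are $\epsilon$-close to being straight (via the homeomorphisms $h_A$), and for the finitely many $i<N_\epsilon$ one checks by hand from the picture that the bending is gentle enough that the $V$- and $\Lambda$-layers inside $K_i^{(1)}$ still present their vertices to the complement and still bound pockets; the bending only changes the shape of the pocket, not its existence or its one-sided openness. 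Combining this with the fact that $\psi_\mathbf{N}(P_i)\to(0,0)$ and $\psi_\mathbf{N}(Q_i)\to(0,1)$, so the pieces accumulate only on $E$ and never crowd each other from both sides, shows that no layer of any $K_i$ gets ``buried,'' and hence every endpoint of every layer of $K$ and every point of every $V$- and $\Lambda$-layer of $K$ is accessible under $\psi_\mathbf{N}$.
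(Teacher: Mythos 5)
Your proposal takes essentially the same approach as the paper: the paper explicitly lets geometric intuition about the construction of $\psi_\mathbf{N}$ stand in for a proof of this proposition, and your endcut/pocket argument (top and bottom points kept on opposite sides of the critical line, straight arcs kept straight, the small-bend estimate for the pieces $K_i^{(1)}$, and the convergence of the $\psi_{\mathbf{N},i}(K_i)$ to $E$) spells out exactly that intuition. It is correct in spirit and, if anything, more detailed than what the paper records.
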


That is, Proposition \ref{layers accessible} states that all points which are accessible from the complement of the standard embedding of $K$ are mapped by $\psi_\mathbf{N}$ so that they are accessible from the complement of $\psi_\mathbf{N}(K)$ for any given sequence $\mathbf{N}$ of positive even integers greater than or equal to 4.

\begin{proposition}\label{equiv seq equiv emb}
If $\mathbf{N}$ and $\mathbf{M}$ are equivalent sequences of positive even integers greater than or equal to 4, then $\psi_\mathbf{N}$ and $\psi_\mathbf{M}$ are equivalent planar embeddings of $K$.
\end{proposition}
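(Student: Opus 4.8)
The plan is to use the defining feature of equivalent sequences (Definition~\ref{inequivalent sequences}): if $\mathbf{N}=(n_1,n_2,\dots)$ and $\mathbf{M}=(m_1,m_2,\dots)$ are equivalent, then there are integers $k,l\ge 0$ with $n_{k+i}=m_{l+i}$ for all $i\ge 1$, so the common tail $\mathbf{C}=(n_{k+1},n_{k+2},\dots)=(m_{l+1},m_{l+2},\dots)$ is again a sequence of positive even integers $\ge 4$. Since a composition of homeomorphisms of the plane is again one, equivalence of planar embeddings is transitive; so it is enough to prove the single reduction step that $\psi_\mathbf{N}$ and $\psi_{\mathbf{N}'}$ are equivalent, where $\mathbf{N}'=(n_2,n_3,n_4,\dots)$ is $\mathbf{N}$ with its first term deleted. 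Granting this, $k$ applications give $\psi_\mathbf{N}\sim\psi_\mathbf{C}$, $l$ applications give $\psi_\mathbf{M}\sim\psi_\mathbf{C}$, and hence $\psi_\mathbf{N}\sim\psi_\mathbf{M}$.

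For the reduction step I would construct a homeomorphism $H$ of the plane with $H(\psi_\mathbf{N}(K))=\psi_{\mathbf{N}'}(K)$ by comparing the two images pocket by pocket. The subschema construction makes clear that, up to a homeomorphism of the plane, the embedded piece $\psi_{\mathbf{N},i}(K_i)$ depends only on the integer $n_i$ (it is a copy of $K_i\cong K$ carrying one $n_i$-fold pocket, placed in a disk nested inside the earlier pieces), and likewise $\psi_{\mathbf{N}',i}(K_i)$ depends only on $n_{i+1}$; moreover the attaching points $p_i$ and the nesting pattern of the chain are the same for both embeddings after shifting the index by one. Consequently, for $i\ge 3$ one can take $H$ to carry $\psi_{\mathbf{N},i}(K_i)$ onto $\psi_{\mathbf{N}',i-1}(K_{i-1})$ (both governed by $n_i$) and to be the identity on $E=\{0\}\times[0,1]$; it then remains to define $H$ on the bounded region carrying the two leading pockets $\psi_{\mathbf{N},1}(K_1)\cup\psi_{\mathbf{N},2}(K_2)$, mapping it onto the single leading pocket $\psi_{\mathbf{N}',1}(K_1)$. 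Here I would use the self-similar structure of $K$ --- in particular that $K_1\cup K_2\cong K$ --- together with the fact that the number of folds of a single pocket is not an ambient invariant: both $\psi_\mathbf{N}(K_1\cup K_2)$ and $\psi_{\mathbf{N}'}(K_1)$ are ``standard pocket embeddings'' of a copy of $K$ into a disk, relative to the outer end layer $E'$ and the attaching point, so they are equivalent by an ambient homeomorphism supported near that disk. Gluing these pieces and invoking $\diam\psi_{\mathbf{N},i}(K_i)\to 0$, $\diam\psi_{\mathbf{N}',i}(K_i)\to 0$, and the controlled accumulation $\psi_\mathbf{N}(P_i)\to\{(0,0)\}$, $\psi_\mathbf{N}(Q_i)\to\{(0,1)\}$ (and the analogues for $\mathbf{N}'$), yields the desired $H$.

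The hard part is the rigor of this gluing, which --- in the spirit of the sketch proofs already used for Proposition~\ref{layers accessible} and for the construction of $\psi_\mathbf{N}$ itself --- I would leave to the geometric description and the figures. What has to be checked is: (i) that the two outermost pockets really can be re-formed into one by a plane isotopy supported in a disk that disturbs none of the nested pieces; (ii) that the piecewise map $H$ is continuous at every point of the limiting arc $E$, where the vanishing diameters and the controlled behaviour of the $P_i$ and $Q_i$ are used; and (iii) that $H$, defined so far as a homeomorphism between the two continua, extends to a homeomorphism of all of $\R^2$ --- a Schoenflies-type ``shrinking disks'' argument. Each of these is visually transparent from the explicit picture of $\psi_\mathbf{N}(K)$, and that is the level at which the argument is carried out.
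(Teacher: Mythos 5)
The paper itself offers no argument for Proposition~\ref{equiv seq equiv emb}: it is explicitly covered by the sentence that geometric intuition about the construction of $\psi_\mathbf{N}(K)$ ``substitutes as a sketch of proof,'' so your proposal cannot be measured against a written proof, only against that intuition. Measured that way, your outline is correct and in fact more explicit than the paper. The reduction via transitivity to the single step $\psi_\mathbf{N}\sim\psi_{\mathbf{N}'}$ with $\mathbf{N}'=(n_2,n_3,\dots)$ is sound (equivalence of sequences is exactly agreement of tails, and equivalence of embeddings is an equivalence relation), and it correctly isolates where the content lies: an ambient homeomorphism that carries $\psi_{\mathbf{N},i}(K_i)$ onto $\psi_{\mathbf{N}',i-1}(K_{i-1})$ for large $i$, is controlled at the limiting arc $E$, and absorbs the finite initial discrepancy by merging the two leading pockets $\psi_{\mathbf{N},1}(K_1)\cup\psi_{\mathbf{N},2}(K_2)$ into the single pocket $\psi_{\mathbf{N}',1}(K_1)$. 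Two points deserve to be made explicit if you write this up: first, your pocket-by-pocket map induces a self-homeomorphism of $K$ shifting the decomposition ($K_1\cup K_2\mapsto K_1$, $K_i\mapsto K_{i-1}$), so you should note that such a re-indexing homeomorphism of the abstract continuum exists (both $K_1\cup K_2$ and $K_1$, and both tails together with $E$, are copies of $K$ attached at vertex points); second, the assertion that a fold confined to a compact region away from $E$ can be undone ambiently is exactly what the later sections are consistent with --- Lemmas \ref{smaller depth}--\ref{equivalent sequences} show depth is only an \emph{asymptotic} invariant near $E$ --- but it is the one step a fully rigorous proof would have to carry out (your items (i)--(iii)), and you leave it, as the paper does, at the level of the pictures. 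So: no gap relative to the paper's own standard, and a genuinely useful skeleton beyond it; just be aware that the unfolding-in-a-disk step is asserted, not proved, in both treatments.
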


\subsection{Schema Pockets and Escape Arcs}

\

In the remainder of this subsection, unless otherwise specified, it should be understood that $\mathbf{N}$ (likewise, $\mathbf{M}$) is any sequence of positive integers greater than or equal to 4. For every $i \in \N$, there is a horizontal crosscut $C_{\mathbf{N},i}$ of $\psi_{\mathbf{N},i}(K_i)$ below the critical line with one endpoint being $\psi_\mathbf{N}(p_i)$ and other endpoint on the left end layer of $\psi_\mathbf{N}(K_i^{(1)})$. This means $\psi_{\mathbf{N}}(K) \cup C_{\mathbf{N},i}$ separates the plane into two open connected components, the bounded component being a topological open disk which we will call the \textbf{$i^\textnormal{th}$ schema pocket} of the complement of $\psi_\mathbf{N}(K)$ which we denote by $D_{\mathbf{N},i}$. (See Figure \ref{fig:escape}.)

For each $i \in \N$, let $q_i$ be the top endpoint of $r(K_i)$. Then there is also a straight crosscut $C_{\mathbf{N},i}'$ having one endpoint being $\psi_\mathbf{N}(q_{i+1})$ and the other being the top endpoint of the left end layer of $\psi_\mathbf{N}(K_i^{(n_i)})$, with $C_{\mathbf{N},i}' \rightarrow (0,1)$ as $i \rightarrow \infty$. This means $\psi_\mathbf{N}(K) \cup C_{\mathbf{N},i}'$ separates the plane into two open connected components, the bounded component also being a topological open disk which we will call the \textbf{$i^\textnormal{th}$ alternate schema pocket} of $\psi_\mathbf{N}(K)$ which will be denoted by $D_{\mathbf{N},i}'$. (Again, see Figure \ref{fig:escape}.)

Let $L$ be a $V$-layer or $\Lambda$-layer of $K$.  Then there exists a straight crosscut $C$ whose endpoints are the endpoints of $\psi_\mathbf{N}(L)$.
Thus, $\psi_\mathbf{N}(L) \cup C$ separates the plane into two open connected components, the bounded component being a topological open disk which we will call the \textbf{$\psi_\mathbf{N}(L)$-pocket}, or simply the \textbf{$L$-pocket} when the embedding $\psi_\mathbf{N}$ is understood. Such a pocket may be denoted as $D_{\mathbf{N},L}$.


\begin{remark}\label{remark on 1}
We may designate the standard embedding of $K$ as $\psi_\mathbf{1}$, where $\mathbf{1} = (1, 1, 1, \ldots)$. Therefore, the $i^\textnormal{th}$ schema pocket, $D_{\mathbf{1},i}$, is not unique, but can be chosen to be a $V$-pocket or $\Lambda$-pocket of the standard embedding of $K$. Furthermore, suppose $L_i$ is the the $V$-layer or $\Lambda$-layer of $\psi_\mathbf{1}(K)$ contained in the boundary of the pocket $D_{\mathbf{1},i}$. Then if $g$ is a Kuratowski map of $K$, $L_i$ can be chosen so that $L_i \rightarrow E$ as $i \rightarrow \infty$, with $g(L_{i+1}) < g(L_i)$  for each $i \in \N$.
\end{remark}

\begin{definition}\label{escape arc}
Let $x \in \R^2\backslash \psi_\mathbf{N}(K)$. Let $J$ be an arc contained in $\R^2\backslash \psi_\mathbf{N}(K)$ having $x$ as an endpoint such that its other endpoint $x'$ is contained in neither a schema pocket nor a subschema pocket of the complement of $\psi_\mathbf{N}(K)$.\footnote{The purpose for the condition on the positioning of the other endpoint of $J$ will become evident when understanding the notion of the depth given in Definition \ref{depth}.} If also there exists a straight vertical ray $A$ in the complement of $\psi_\mathbf{N}(K)$ having $x'$ as its endpoint, then we say $J$ is an \textbf{escape arc of $x$ with respect to $\mathbf{N}$}. We may say that $J$ is an escape arc of $x$, or simply, that $J$ is an escape arc when the initial endpoint $x$ of $J$ and the embedding $\psi_\mathbf{N}$ are understood.
\end{definition}

That is, $J$ is an escape arc of $x$ with respect to $\mathbf{N}$ if $J$ is a path which $x$ may follow so that it may become "free to escape" arbitrarily far away below $\psi_\mathbf{N}(K)$ or arbitrarily far away above $\psi_\mathbf{N}(K)$ via a strait vertical ray $A$ appended to $J$ at its other endpoint $x'$.

\begin{definition}\label{minimal escape arc}
Let $J$ be as in Definition \ref{escape arc}. Suppose further that $J$ can be decomposed into $J_1, \ldots, J_n$ such that for each $i = 1, \ldots, n$, $J_i$ is a maximal subarc of $J$ having the property that $\pi \upharpoonright J_i$ is a homeomorphism onto its image. If $n$ is the least possible integer satisfying such a condition, then we say that $J$ is a \textbf{minimal escape arc of $x$ with respect to $\mathbf{N}$}. We may say that $J$ is a minimal escape arc of $x$, or simply, that $J$ is a minimal escape arc when the initial endpoint $x$ of $J$ and the embedding $\psi_\mathbf{N}$ are understood.
\end{definition} 

The meaning of Definition \ref{minimal escape arc} is that a minimal escape arc is an escape arc that does not take random and unnecessary turns to escape below or above $\psi_\mathbf{N}(K)$. See Figure \ref{fig:escape} for examples of minimal escape arcs.

\begin{figure}
\centering
\includegraphics[width=15cm]{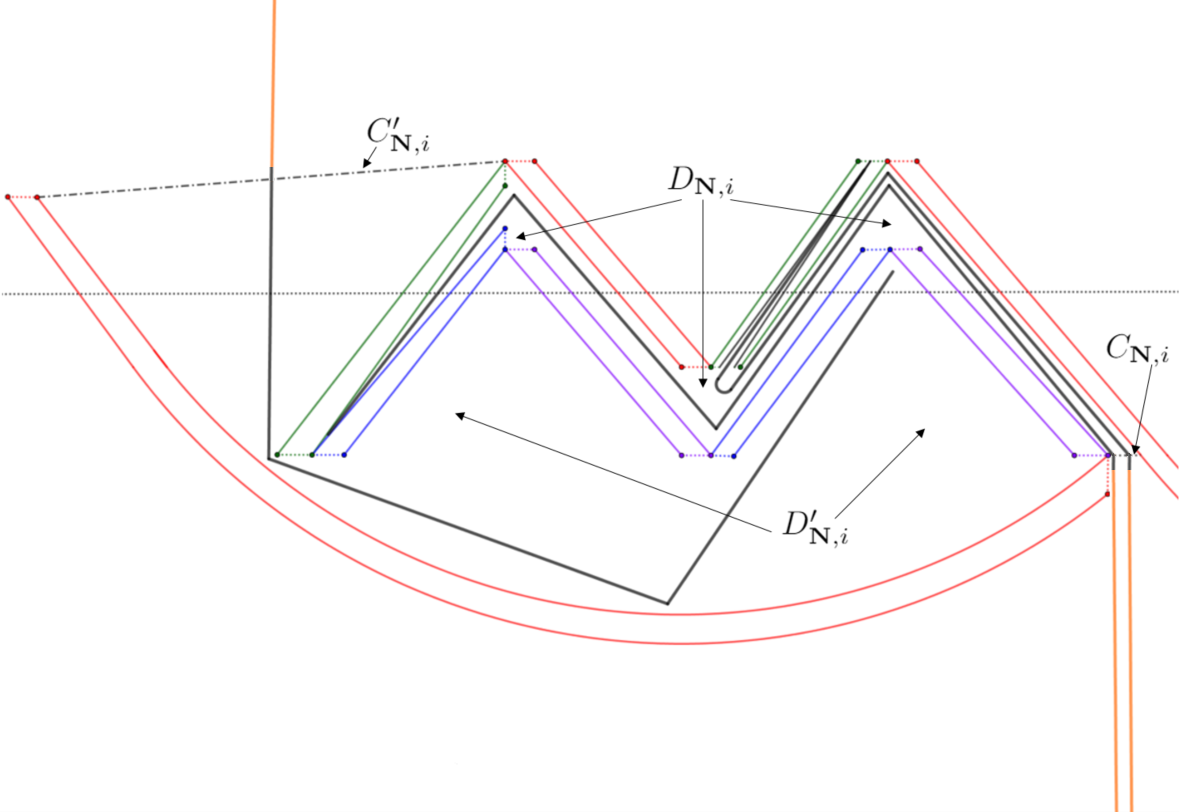}
\caption{Depicted here is $\psi_\mathbf{N}(K_i) \cup \psi_\mathbf{N}(K_{i+1}^{(1)})$ with crosscuts $C_{\mathbf{N},i}$ and $C_{\mathbf{N},i}'$ and their corresponding pockets $D_{\mathbf{N},i}$ and $D_{\mathbf{N},i}'$, respectively. What is also shown is an escape arc out of $D_{\mathbf{N},i}$ having depth 4, and escape arc out of $D_{\mathbf{N},i}$ from a $\Lambda$-pocket having a depth of 3, and an escape arc out of $D_{\mathbf{N},i}'$ having a depth of 2. The orange segments depict vertical rays moving away from the escape arcs.
}
\label{fig:escape}
\end{figure}

\begin{remark}\label{alternate escapes}
Given a schema embedding $\psi_\mathbf{N}$ and a point $x$ in the complement of $\psi_\mathbf{N}(K)$, we may wish to specify the type of pocket $x$ may escape from through an escape arc $J$. That is, suppose $R$ is a region which is either a schema pocket, an alternate schema pocket, a $V$-pocket, or a $\Lambda$-pocket of the complement of $\psi_\mathbf{N}(K)$.  If $x \in R$ and $J$ an escape arc of $x$, then we may say that $J$ is an escape arc of $x$ from $R$.
\end{remark}

\begin{definition}\label{turning points}
Let $J$ be a minimal escape arc from a point of a schema pocket of the complement of a schema embedding of $K$, and let $z \in J$.  We say that $z$ is a \textbf{top (resp., bottom) turning point of $J$} if there exists an $\epsilon > 0$ so that for each horizontal line segment $H$ contained in the open ball $B_\epsilon(z)$ of radius $\epsilon$ centered on $z$ with the boundary of $H$ contained in the boundary of $B_\epsilon(z)$, $H \cap J = \{z\}$ if and only if $H$ contains $z$, $H \cap J$ contains two points if $H$ is below (resp., above) $z$, and $H \cap J = \emptyset$ if $H$ is above (resp., below) $z$.
\end{definition}

That is, $z$ is a turning point of an escape arc $J$ if $\{z\} = J_i \cap J_{i+1}$ for some $i = 1, \ldots, n-1$, where $J_1, \ldots, J_n$ is the decomposition of $J$ into the least number of subarcs having the property that $\pi \upharpoonright J_i: J_i \rightarrow \pi(J_i)$ is a homeomorphism.

We now introduce the notion of depth of points (and subsets) in the complement of a given schema embedding.

\begin{definition}\label{depth}
Let $x$ be as in Definitions \ref{escape arc} and \ref{minimal escape arc}.  The \textbf{depth of $x$ with respect to $\mathbf{N}$}, or simply, the \textbf{depth of $x$} when $\mathbf{N}$ is understood, is the number of points of $J\backslash\{x\}$ contained in the critical line for any minimal escape arc $J$ of $x$ with respect to $\mathbf{N}$. If $S$ is a subset of the complement of $\psi_\mathbf{N}(K)$, then we say that the \textbf{depth of $S$ with respect to $\mathbf{N}$} is the maximum of the depths of all points contained in $S$ if such maximum exists.\footnote{If for every $n \in \N$ there is a point $x$ in $S$ such that the depth of $x$ is greater than $n$, we say that the depth of $S$ is $\infty$. Such is the case for the complement of $\psi_\mathbf{N}(K)$ whenever $\mathbf{N}$ contains an increasing subsequence. However, we need not concern ourselves with infinite depth in the remainder of this paper.}
\end{definition}

\begin{remark}\label{alternate depths}
Adding to Remark \ref{alternate escapes}, we may wish to specify the depth a point $x$ has within such a region $R$. If $x$ is contained in $R$, we may say that the depth of $x$ in $R$ is the minimum number of times any minimal escape arc of $x$ from $R$ crosses the critical line in $R$. If $x$ is not contained in $R$, we may take the convention to be that the depth of $x$ in $R$ is 0.  However, this does not mean that the depth of $x$ with respect to $\mathbf{N}$ is 0. This is because $x$ may be in another pocket $R'$ of the complement of $\psi_\mathbf{N}(K)$ in which the depth of $x$ in $R'$ is not 0.
\end{remark}

The depth of a point in the complement of $\psi_\mathbf{N}(K)$ provides a means to measure how "trapped" or "confined" it is in the pockets of $\R^2\backslash \psi_\mathbf{N}(K)$. Note that a point $x$ may be contained in a pocket of the complement of $\psi_\mathbf{N}(K)$ but still have depth 0. This is because a minimal escape arc of $x$ would not have to cross the critical line $y = 1/2$ to be adjoined to a straight vertical ray. We should also note that a point $x$ in the complement of $\R^2$ not contained in schema or alternate schema pockets will also have a depth of 0 as there will always be a strait vertical ray having $x$ as an endpoint.



Again, we advise the reader to allow geometric intuition as a sketch of proof for the next two propositions, making use of Figures \ref{fig:subschema} and \ref{fig:escape} as needed.

\begin{proposition}\label{turning points to endpoints}
Let $\psi_\mathbf{N}$ be a schema embedding of $K$ and let $(x_1, x_2, x_3, \allowbreak \ldots)$ be a sequence of points so that for each $i \in \N$, $x_i \in D_{\mathbf{N},i}$ with the depth of $x_i$ in $D_{\mathbf{N},i}$ being at least 4.  If for each $i \in \N$, $J_i$ is a minimal escape arc from $x_i$ out of $D_{\mathbf{N},i}$, then the top turning points of $J_i$ converge to the top endpoint, $(0,1)$, of $\psi_\mathbf{N}(E)$ as $i \to \infty$ and the bottom turning points of $J_i$ converge to the bottom endpoint, $(0,0)$, of $\psi_\mathbf{N}(E)$ as $i \to \infty$. 
\end{proposition}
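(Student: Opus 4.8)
The plan is to argue by contradiction, exploiting the convergence $\psi_{\mathbf{N},i}(K_i) \to \{0\}\times[0,1]$ together with the geometry of the schema pockets. First I would fix $\epsilon > 0$ and suppose, toward a contradiction, that (say) infinitely many of the top turning points of the $J_i$ stay outside the $\epsilon$-ball around $(0,1)$; pass to a subsequence so that this happens for all $i$. The essential structural observation to record is this: a minimal escape arc $J_i$ from $x_i \in D_{\mathbf{N},i}$ with depth at least $4$ in $D_{\mathbf{N},i}$ must, in order to leave the pocket, "thread past" the nested layers that bound $D_{\mathbf{N},i}$ and the alternate pocket $D_{\mathbf{N},i}'$ above it; because the depth is at least $4$ the arc is forced to make turns near both the top and the bottom of the configuration, i.e.\ near $\psi_\mathbf{N}(K_i^{(1)})$, $\psi_{\mathbf{N},i}(K_i)$, and in particular near the extreme endpoints $\psi_\mathbf{N}(q_{i+1})$ and $\psi_\mathbf{N}(p_i)$ (and their companion vertices). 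The key point is that a top turning point of $J_i$ must lie inside, or on the boundary of, the $i^{\textnormal{th}}$ alternate schema pocket $D_{\mathbf{N},i}'$ (or within the $\Lambda$-pockets nested inside it), since that is the only place above the critical line where $J_i$ can be "pinned" so as to be forced to turn. Symmetrically, every bottom turning point of $J_i$ lies inside $D_{\mathbf{N},i}$ proper, between $\psi_{\mathbf{N},i}(K_i)$ and the crosscut $C_{\mathbf{N},i}$.

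Next I would invoke the convergence facts already built into the construction. By Definition \ref{schema Embedding} and the discussion following it, $\psi_{\mathbf{N},i}(K_i) \to \{0\}\times[0,1]$, and moreover $\psi_\mathbf{N}(P_i) \to \{(0,0)\}$ and $\psi_\mathbf{N}(Q_i) \to \{(0,1)\}$ as $i \to \infty$. Since $C_{\mathbf{N},i}'$ is the crosscut from $\psi_\mathbf{N}(q_{i+1})$ to the top endpoint of the left end layer of $\psi_\mathbf{N}(K_i^{(n_i)})$ — both of which are images of top endpoints of layers, hence members of $\psi_\mathbf{N}(Q_{i})$ or $\psi_\mathbf{N}(Q_{i+1})$ — we already have $C_{\mathbf{N},i}' \to (0,1)$, as recorded in the text. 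Therefore $\diam(D_{\mathbf{N},i}') \to 0$ and $D_{\mathbf{N},i}' \to \{(0,1)\}$ in the Hausdorff metric; similarly the crosscut $C_{\mathbf{N},i}$ has endpoints $\psi_\mathbf{N}(p_i)$ (in $\psi_\mathbf{N}(P_i)$) and a point on the left end layer of $\psi_\mathbf{N}(K_i^{(1)}) \subset \psi_{\mathbf{N},i}(K_i)$, so $C_{\mathbf{N},i} \to (0,0)$ and $\diam(D_{\mathbf{N},i}) \to 0$. Combining this with the structural observation of the previous paragraph: every top turning point of $J_i$ lies in $\cl(D_{\mathbf{N},i}') \cup \bigcup(\Lambda\text{-pockets inside }D_{\mathbf{N},i}')$, all of which shrink to $(0,1)$, so the top turning points converge to $(0,1)$; and every bottom turning point lies in $\cl(D_{\mathbf{N},i})$, which shrinks to $(0,0)$, so the bottom turning points converge to $(0,0)$. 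This contradicts the assumption from the first paragraph, completing the argument.

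I would present the structural observation carefully, perhaps with an auxiliary sublemma: if $x \in D_{\mathbf{N},i}$ has depth at least $4$ in $D_{\mathbf{N},i}$, then any minimal escape arc $J$ from $x$ out of $D_{\mathbf{N},i}$ has all of its top turning points in $\cl(D_{\mathbf{N},i}')$ together with the finitely many $\Lambda$-pockets it contains, and all of its bottom turning points in $\cl(D_{\mathbf{N},i})$. The justification is a planar-separation argument: to exit $D_{\mathbf{N},i}$ the arc must cross the critical line at least four times (by the depth hypothesis and Definition \ref{depth}), and each time it returns toward the critical line after going up it must have turned at a top turning point; the only obstacles forcing such a turn above the critical line that are relevant to a minimal (non-wasteful) arc are the "fingers" of $\psi_{\mathbf{N}}(K)$ bounding the alternate pocket $D_{\mathbf{N},i}'$ and the $\Lambda$-pockets nested within it — one uses here that $J$ is minimal (Definition \ref{minimal escape arc}), so it does not loop around anything it is not forced to, and that its terminal endpoint $x'$ lies in neither a schema nor a subschema pocket (the side condition in Definition \ref{escape arc}), so the arc genuinely leaves the whole nested pocket system rather than lingering. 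The main obstacle, and the place where one must be most careful, is precisely this topological bookkeeping: turning the informal picture of Figure \ref{fig:escape} into a rigorous claim about which pockets a minimal escape arc's turning points can occupy, given only the combinatorial data $(1_{K_i})$–$(4_{K_i})$ and the qualitative description of $\psi_{\mathbf{N},i}$. Once that is pinned down, the convergence conclusion is immediate from the Hausdorff-limit estimates above (and is closely analogous to Lemma \ref{small bends}).
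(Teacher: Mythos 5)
First, note that the paper itself offers no written argument for Proposition \ref{turning points to endpoints}; it explicitly defers to geometric intuition and Figures \ref{fig:subschema} and \ref{fig:escape}. So your attempt to supply an actual proof is welcome, but as written it has a genuine gap in exactly the step that does all the work.

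The gap is in your convergence claims for the pockets. You conclude by saying that every bottom turning point lies in $\cl(D_{\mathbf{N},i})$, ``which shrinks to $(0,0)$,'' and that every top turning point lies in $\cl(D_{\mathbf{N},i}')$ (plus nested $\Lambda$-pockets), ``all of which shrink to $(0,1)$.'' Neither shrinking statement is true, and neither follows from the convergence of the crosscuts $C_{\mathbf{N},i}$ and $C_{\mathbf{N},i}'$ to points: a crosscut of small diameter can bound a complementary domain of large diameter, and that is precisely the situation here. Indeed, $D_{\mathbf{N},i}$ has depth $n_i \geq 4$ (Proposition \ref{depth of pockets}), so it contains points whose minimal escape arcs cross the critical line $y=1/2$ at least four times \emph{inside} $D_{\mathbf{N},i}$; hence $D_{\mathbf{N},i}$ meets both sides of the critical line while its bounding crosscut sits at height $\approx 0$, so $\diam(D_{\mathbf{N},i})$ is bounded below (in fact tends to $1$), and $\cl(D_{\mathbf{N},i})$ converges to the whole segment $\{0\}\times[0,1]$, not to $(0,0)$. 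The same objection applies to $D_{\mathbf{N},i}'$, whose depth is $2$, so it too straddles the critical line and does not shrink to $(0,1)$. (If your shrinking claims were correct, the pockets would eventually lie entirely on one side of the critical line and every point in them would have depth $0$, contradicting the standing hypothesis that $x_i$ has depth at least $4$ in $D_{\mathbf{N},i}$.) Consequently, locating the turning points in the closures of these pockets yields only convergence to the segment $\{0\}\times[0,1]$, which is strictly weaker than the proposition's conclusion. In addition, the localization of the top turning points in $\cl(D_{\mathbf{N},i}')$ is itself doubtful: a minimal escape arc from $x_i$ out of $D_{\mathbf{N},i}$ stays in $D_{\mathbf{N},i}$ until it crosses $C_{\mathbf{N},i}$, so its top turning points occur at the upper bends of the serpentine pocket $D_{\mathbf{N},i}$ itself, not in the alternate pocket, which is a different complementary domain cut off by a crosscut involving $\psi_\mathbf{N}(K_{i+1})$.

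What the argument actually needs is a finer localization: using minimality (Definitions \ref{escape arc}--\ref{minimal escape arc}), one must argue that a minimal escape arc turns only where it is forced to round a fold of $\psi_{\mathbf{N},i}(K_i)$, so each top turning point is confined to a narrow ``upper gallery'' bounded by points of $\psi_\mathbf{N}(Q_i)\cup\psi_\mathbf{N}(Q_{i+1})$ and each bottom turning point to a ``lower gallery'' bounded by points of $\psi_\mathbf{N}(P_i)$ together with the crosscut $C_{\mathbf{N},i}$; the conclusion then follows from the convergences $\psi_\mathbf{N}(Q_i)\to\{(0,1)\}$ and $\psi_\mathbf{N}(P_i)\to\{(0,0)\}$ built into Definition \ref{schema Embedding}, in the spirit of Lemma \ref{small bends}. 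That forcing step is the topological bookkeeping you yourself flag as the main obstacle, and in the current write-up it is replaced by the incorrect pocket-diameter estimates rather than carried out.
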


\begin{proposition}\label{depth of pockets}
Let $\psi_\mathbf{N}$ be the schema embedding of $K$. Then for each $i \in \N$, the depth of $D_{\mathbf{N},i}$ with respect to $\mathbf{N}$ is $n_i$, and the depth of $D_{\mathbf{N},i}'$ with respect to $\mathbf{N}$ is 2. Furthermore, if $L$ is a $V$-layer of $\Lambda$ layer of $K$, then the depth of $D_{\mathbf{N},L}$ with respect to $\mathbf{N}$ is at least 1. 
\end{proposition}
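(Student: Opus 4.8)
The plan is to compute each depth directly from the geometric construction of $\psi_{\mathbf{N}}$ and the combinatorics of the decompositions $K_i = K_i^{(1)} \cup \cdots \cup K_i^{(2n_i)}$, reading off crossings of the critical line $y = 1/2$ from Figure~\ref{fig:subschema}. The three claims are of the same flavor — each asserts that the minimal number of times a minimal escape arc must cross the critical line equals a specific small integer — so in each case I will argue two inequalities: an upper bound by exhibiting a minimal escape arc with the claimed number of crossings, and a lower bound by showing no escape arc can do better.

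First I would handle $D_{\mathbf{N},i}$. For the upper bound, start at a point $x$ deep inside $D_{\mathbf{N},i}$, follow an arc through the $2n_i$ "folded" copies $\psi_{\mathbf{N}}(K_i^{(l)})$ in order; because the folding places $s_i^{(n_i+k)}$ vertically collinear with $s_i^{(n_i-k)}$, the arc can slalom past the nested stack, crossing $y=1/2$ once for each of the $n_i$ pairs of layers it must get around before it reaches the far side of $\psi_{\mathbf{N},i}(K_i)$ and can be joined to a vertical ray; thus the depth is at most $n_i$. For the lower bound, observe that $\psi_{\mathbf{N}}(K_i)$ together with the crosscut $C_{\mathbf{N},i}$ bounds the pocket $D_{\mathbf{N},i}$, and the nested structure of the $n_i$ "deepest" folds creates $n_i$ concentric "barriers," each separating the interior of the pocket from the exterior in a way that forces any arc leaving the pocket to pass from below to above (or vice versa) the critical line; since by Definition~\ref{minimal escape arc} a minimal escape arc takes no unnecessary turns, it crosses exactly $n_i$ times, and the requirement that its terminal endpoint lie outside all schema and subschema pockets prevents "cheating" by terminating inside a shallow sub-pocket. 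The same argument, restricted to the at most two folds adjacent to the left end layer of $\psi_{\mathbf{N}}(K_i^{(n_i)})$, gives depth exactly $2$ for the alternate pocket $D_{\mathbf{N},i}'$, since $C_{\mathbf{N},i}'$ sits above the critical line and the escaping arc must dip below and come back up. For a $V$- or $\Lambda$-pocket $D_{\mathbf{N},L}$, the crosscut $C$ joins the two endpoints of $\psi_{\mathbf{N}}(L)$, which lie on opposite sides of the critical line (one in $P$, one in $Q$), so any escape arc starting inside must cross $y=1/2$ at least once, giving depth at least $1$.

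The main obstacle will be the lower-bound arguments: making rigorous the claim that the nested folds genuinely act as topological barriers forcing critical-line crossings, rather than merely looking that way in the figure. The clean way to do this is via the Jordan curve theorem applied to $\psi_{\mathbf{N}}(K_i) \cup C_{\mathbf{N},i}$ and its sub-configurations — each partial union $\psi_{\mathbf{N}}(K_i^{(1)} \cup \cdots \cup K_i^{(m)})$ together with a suitable crosscut bounds a Jordan domain, and one shows inductively that escaping from the $m$-th such domain into the $(m-1)$-st forces one additional critical-line crossing because the relevant turning point (a vertex $s_i^{(l)}$) has been placed on the opposite side of $y=1/2$ from its partner. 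I would phrase this using the turning-point language of Definition~\ref{turning points} and lean on the fact established in the construction that top and bottom endpoints and vertices of layers of $K_i$ stay respectively above and below the critical line under $\psi_{\mathbf{N}}$. Given the paper's stated convention of allowing geometric intuition from Figures~\ref{fig:subschema} and~\ref{fig:escape} to stand in for the proofs of the neighboring propositions, I expect the actual write-up to be a short paragraph citing those figures rather than a fully formalized Jordan-curve induction.
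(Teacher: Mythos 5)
Your plan for the first two claims is consistent with what the paper actually does: the paper offers no written proof of Proposition \ref{depth of pockets} at all, explicitly deferring to geometric intuition and Figures \ref{fig:subschema} and \ref{fig:escape}, and your proposed upper-bound escape arc plus Jordan-curve/turning-point lower bound is a reasonable elaboration of that intuition (your anticipation that the write-up would be a short appeal to the figures is exactly right). One caveat on the first claim: to get the depth of $D_{\mathbf{N},i}$ \emph{equal} to $n_i$ you need the upper bound $\leq n_i$ for \emph{every} point of the pocket, not just for a point ``deep inside''; the same slalom corridor handles this, but say so.

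There is, however, a genuine error in your argument for the third claim. You assert that the two endpoints of $\psi_\mathbf{N}(L)$ ``lie on opposite sides of the critical line (one in $P$, one in $Q$).'' That is false for $K$: by the construction of the standard embedding, a $V$-layer has \emph{both} endpoints in $Q$ (at $y=1$) and its vertex in $P$, while a $\Lambda$-layer has both endpoints in $P$ and its vertex in $Q$; since the schema embeddings keep $P$ below and $Q$ above $y=1/2$, the straight crosscut $C$ joining the endpoints of $\psi_\mathbf{N}(L)$ lies entirely on one side of the critical line. Moreover, even if your premise were true, the inference would not follow: if $C$ itself crossed the critical line, an escape arc could exit through the portion of $C$ on the same side as its starting point without ever meeting $y=1/2$. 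The correct (and equally short) argument is the one your false premise obscures: the pocket $D_{\mathbf{N},L}$ contains points near the vertex, which lie on the opposite side of the critical line from $C$; any escape arc from such a point must have its terminal endpoint outside the pocket (by Definition \ref{escape arc}, since the endpoint may lie in no schema or subschema pocket), and the only part of the boundary of $D_{\mathbf{N},L}$ not contained in $\psi_\mathbf{N}(K)$ is $C$, so the arc must exit through $C$ and hence cross the critical line at least once, giving depth at least $1$.
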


\begin{lemma}\label{V pockets to V pockets}
Let $\psi_\mathbf{N}$ and $\psi_\mathbf{M}$ be schema embeddings of $K$. Let $(V_1, V_2, V_3, \allowbreak \ldots)$ be a sequence of $V$-layers of $K$ and let $(x_1, x_2, x_3, \ldots)$ be a sequence of points in the plane converging to $\psi_\mathbf{N}(p) = (0,0)$ so that for each $i \in \N$, $x_i \in D_{\mathbf{N},V_i}$. Let $h$ be a homeomorphism of the plane onto itself mapping $\psi_\mathbf{N}(K)$ onto $\psi_\mathbf{M}(K)$ such that $h(\psi_\mathbf{N}(p)) = \psi_\mathbf{M}(p)$. Then there exists an $N \in \N$ such that for every $i \geq N$, $h(x_i)$ is contained in the $V$-pocket $D_{\mathbf{M},\Gamma_i}$, where $\Gamma_i := \psi^{-1}_\mathbf{M}(h(\psi_\mathbf{N}(V_i)))$.
\end{lemma}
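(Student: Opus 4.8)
The plan is to show that the $V$-pocket structure is a topological invariant detectable by the depth machinery, and then push $V_i$ deep enough so that $h$ must send its pocket to a $V$-pocket of $\psi_\mathbf{M}(K)$. First I would recall from Proposition \ref{depth of pockets} that every $V$-pocket $D_{\mathbf{N},V_i}$ has depth at least $1$, so it is a genuine pocket of the complement; and since $x_i \to \psi_\mathbf{N}(p) = (0,0)$, the pockets $D_{\mathbf{N},V_i}$ themselves shrink toward $(0,0)$. The homeomorphism $h$ carries $\psi_\mathbf{N}(K)$ to $\psi_\mathbf{M}(K)$ and fixes the image of $p$, so by Lemma \ref{homeos of K} (applied to $\psi_\mathbf{M}^{-1} \circ h \circ \psi_\mathbf{N}$, a self-homeomorphism of $K$ fixing $p$ up to the end-layer dichotomy, which here forces $h(\psi_\mathbf{N}(p))=\psi_\mathbf{M}(p)$ to land in the correct end) we get $h(P)$-type control: $h$ sends $\psi_\mathbf{N}(P)$ to $\psi_\mathbf{M}(P)$. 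Next I would use Corollary \ref{like layers}: $h$ maps the $V$-layer $\psi_\mathbf{N}(V_i)$ to a $V$-layer or $\Lambda$-layer of $\psi_\mathbf{M}(K)$, and the definition $\Gamma_i := \psi_\mathbf{M}^{-1}(h(\psi_\mathbf{N}(V_i)))$ names exactly that layer of $K$. Because $h$ preserves which endpoint is "bottom" (via the $P \to P$ conclusion and Proposition \ref{vertex to vertex} sending vertices to vertices), and because the $V_i$ accumulate at $E$ so their images under $h$ accumulate at $\psi_\mathbf{M}(E)$ from the $p$-side, for all large $i$ the layer $\Gamma_i$ must in fact be a $V$-layer (not a $\Lambda$-layer): a $\Lambda$-layer near $\psi_\mathbf{M}(E)$ would have its pocket opening the other way relative to the critical line, contradicting that $h$ preserves the critical-line side of $P$.

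With $\Gamma_i$ identified as a $V$-layer, I would then argue the image point $h(x_i)$ actually lands inside $D_{\mathbf{M},\Gamma_i}$. The key is that $x_i$ is separated from the rest of $\psi_\mathbf{N}(K)$ together with a straight vertical escape ray — i.e., $x_i$ has a minimal escape arc $J_i$ of some finite depth out of $D_{\mathbf{N},V_i}$; under $h$ the arc $h(J_i)$ is an arc in the complement of $\psi_\mathbf{M}(K)$ starting at $h(x_i)$, and by Lemma \ref{small bends} (applied to the straight maximal subarcs of $\psi_\mathbf{N}(K_i^{(1)})$-type pieces near $E$, whose images under $h$ have vanishing bends) the geometry of $h(J_i)$ is controlled well enough that for large $i$ the only pocket of $\psi_\mathbf{M}(K)$ into which $h(x_i)$ can fall, consistent with $h(\psi_\mathbf{N}(V_i)) = \psi_\mathbf{M}(\Gamma_i)$ bounding it, is the $V$-pocket $D_{\mathbf{M},\Gamma_i}$. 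Concretely: $h(x_i)$ is trapped by the crosscut $C$ realizing $D_{\mathbf{M},\Gamma_i}$ because $h$ sends the crosscut of $D_{\mathbf{N},V_i}$ (whose endpoints are the endpoints of $\psi_\mathbf{N}(V_i)$) to an arc with endpoints the endpoints of $\psi_\mathbf{M}(\Gamma_i)$, and since $\Gamma_i$ is a $V$-layer this arc, pushed to the straight crosscut of $D_{\mathbf{M},\Gamma_i}$ by a further ambient isotopy supported near $\psi_\mathbf{M}(\Gamma_i)$, cuts off exactly $D_{\mathbf{M},\Gamma_i}$; the point $h(x_i)$, lying on the bounded side of $h(C_{\mathbf{N},V_i})$, therefore lies in $D_{\mathbf{M},\Gamma_i}$.

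The main obstacle I expect is the last step: ruling out that $h(x_i)$ escapes through a "larger" pocket of $\psi_\mathbf{M}(K)$ — say, a schema pocket $D_{\mathbf{M},j}$ or an alternate schema pocket — that also happens to contain $\psi_\mathbf{M}(\Gamma_i)$ in its boundary closure. This is where one must quantify over $i$ and invoke convergence: since $x_i \to (0,0)$ and $D_{\mathbf{N},V_i} \to \{(0,0)\}$, and since $h$ is uniformly continuous on compacta, $h(x_i) \to \psi_\mathbf{M}(p) = (0,0)$ as well; but every schema pocket $D_{\mathbf{M},j}$ and alternate schema pocket $D_{\mathbf{M},j}'$ is a fixed pocket bounded away from $(0,0)$ (their boundaries $C_{\mathbf{M},j}$, $C_{\mathbf{M},j}'$ are at positive distance from $(0,0)$ for each fixed $j$, and only accumulate at $(0,1)$ in the alternate case, not at $(0,0)$), so for all sufficiently large $i$ the point $h(x_i)$ cannot lie in any of them. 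That leaves only $V$-pockets and $\Lambda$-pockets near $(0,0)$; the side-of-critical-line argument from the first paragraph eliminates the $\Lambda$-pockets, and the unique $V$-layer bounding a pocket containing $h(x_i)$ and equal to $\psi_\mathbf{M}(\Gamma_i)$ forces $h(x_i) \in D_{\mathbf{M},\Gamma_i}$. Choosing $N$ large enough to cover all these "sufficiently large $i$" finitely many exclusions simultaneously completes the proof.
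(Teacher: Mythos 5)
Your first step---using Corollary \ref{like layers} and Lemma \ref{homeos of K} (applied to $\psi_\mathbf{M}^{-1}\circ h\circ\psi_\mathbf{N}$) to see that $\Gamma_i$ is a $V$-layer---is fine and agrees with the paper. The rest has two genuine gaps. The crosscut argument does not establish $h(x_i)\in D_{\mathbf{M},\Gamma_i}$: if $C_i$ is the straight crosscut joining the endpoints of $\psi_\mathbf{N}(V_i)$, then $h(C_i)$ is indeed a crosscut of $\psi_\mathbf{M}(K)$ with the same endpoints as the straight crosscut defining $D_{\mathbf{M},\Gamma_i}$, and $h(x_i)$ lies in the Jordan domain bounded by $\psi_\mathbf{M}(\Gamma_i)\cup h(C_i)$; but that domain need not equal $D_{\mathbf{M},\Gamma_i}$, since $h(C_i)$ may exit through the opening of the $V$ and enclose, together with the layer, a region of the complement reaching far from the $V$-pocket. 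The ``further ambient isotopy supported near $\psi_\mathbf{M}(\Gamma_i)$'' that pushes $h(C_i)$ to the straight crosscut without changing the side on which $h(x_i)$ lies is exactly the assertion to be proved, not a tool. Note also that if this argument were sound it would give the conclusion for \emph{every} $i$, whereas the lemma only claims (and its proof only yields) the conclusion for all sufficiently large $i$; that mismatch is a warning sign.

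The second gap is the exclusion of the other pockets, which is where the real content lies. Your claim that the schema pockets are bounded away from $(0,0)$ and ``only accumulate at $(0,1)$ in the alternate case'' is false for the pockets $D_{\mathbf{M},j}$: they converge to the entire left end layer $E_Y$ as $j\to\infty$ (the paper uses $D_{\mathbf{4},i}\to E_Y$ in the proof of Lemma \ref{1 is not 4}), and the endpoints $\psi_\mathbf{M}(p_j)$ of the crosscuts $C_{\mathbf{M},j}$ converge to $(0,0)$. Even granting positive distance for each fixed $j$, concluding that ``for all large $i$, $h(x_i)$ lies in no schema pocket'' is a quantifier error: nothing prevents $h(x_i)\in D_{\mathbf{M},j_i}$ with $j_i\to\infty$, nor $h(x_i)$ lying in a $\Lambda$-pocket or in a $V$-pocket other than $D_{\mathbf{M},\Gamma_i}$ near $(0,0)$. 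Ruling out these scenarios is the heart of the lemma, and the paper does it by contradiction with a dichotomy on the depth of $h(x_i)$ with respect to $\mathbf{M}$: if the depth is $0$, straight vertical arcs hanging below $h(x_i)$ and converging to $\{0\}\times[-1,0]$ pull back under $h^{-1}$ to arcs whose initial points are trapped in $D_{\mathbf{N},V_i}$, so they cannot converge to $h^{-1}(\{0\}\times[-1,0])$; if the depth is positive, the endcuts in $\cl(D_{\mathbf{N},V_i})$ from $x_i$ to the vertex of $V_i$ shrink to $\psi_\mathbf{N}(p)$, while their images join the vertex of $\psi_\mathbf{M}(\Gamma_i)$ to a point of positive depth outside $D_{\mathbf{M},\Gamma_i}$ and hence cannot shrink to $\psi_\mathbf{M}(p)$. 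Some argument of this kind, playing the convergence $x_i\to\psi_\mathbf{N}(p)$ against the continuity of $h$ and $h^{-1}$, is what your proposal is missing.
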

\begin{proof}
Suppose there exists a subsequence $(x_{i_1}, x_{i_2}, x_{i_3}, \ldots )$ such that for every $j \in \N$, $h(x_{i_j}) \notin D_{\mathbf{M},\Gamma_{i_j}}$. By Corollary \ref{like layers} and Lemma \ref{homeos of K}, $h(\psi_\mathbf{N}(V_{i_j}))$ is a $V$-layer of $\psi_\mathbf{M}(K)$, and thus, $\Gamma_{i_j}$ a $V$-layer of $K$, for every $j  \in \N$.
We must consider two cases.

The first case we consider is that for each $j \in \N$, the depth of $h(x_{i_j})$ with respect to $\mathbf{M}$ is 0.  Since $x_{i_j} \to \psi_\mathbf{N}(p)$ as $j \to \infty$, it follows that $h(x_{i_j}) \to \psi_\mathbf{M}(p)$ as $j \to \infty$. Thus, for some $M \in \N$ and for every $j \geq M$, there exists a straight vertical arc $A_j$ whose top endpoint is $h(x_{i_j})$ so that $A_j \to A = \{0\} \times [-1,0]$ as $j \to \infty$. Since $A \cap \psi_\mathbf{M}(E) = \{\psi_\mathbf{M}(p)\}$, it follows that $h^{-1}(A) \cap \psi_\mathbf{N}(E) = \{\psi_\mathbf{N}(p)\}$. However, since $x_{i_j} \in D_{\mathbf{N}, V_{i_j}}$ for every $j \geq N$, it follows that $h^{-1}(A_j) \not\to h^{-1}(A)$ as $j \rightarrow \infty$, a contradiction to $h$ being a homeomorphism.

The second case considered is that for each $j \in \N$, the depth of $h(x_{i_j})$ with respect to $\mathbf{M}$ is positive. For each $j \in \N$, let $C_j$ be an endcut of $\psi_\mathbf{N}(K)$ contained in $\cl(D_{\mathbf{N},V_i})$ whose endpoints are $x_{i_j}$ and the vertex of $V_{i_j}$, and so that $C_j \to \{\psi_\mathbf{N}(p)\}$ as $j \to \infty$. Since $h(x_{i_j}) \notin D_{\mathbf{M}, \Gamma_{i_j}}$ for each $j \in \N$, it follows that $h(C_j) \not\to \{\psi_\mathbf{M}(p)\}$ as $j \rightarrow \infty$, also contradicting that $h$ is a homeomorphism. 
\end{proof}


We are now in position to provide a special case in which we prove that the standard embedding of $K$ is not equivalent to the schema embedding of $K$ according to the sequence of all 4's.  Recall by Remark \ref{remark on n} that this embedding can be denoted by $\psi_\mathbf{4}$. It follows as a consequence that the standard embedding of $K$ is inequivalent to any schema embedding of $K$.

\begin{lemma}\label{1 is not 4}
The standard embedding of the Knaster $V \Lambda$-continuum $K$ is inequivalent to the embedding $\psi_\mathbf{4}$. Moreover, the standard embedding of $K$ is inequivalent to $\psi_\mathbf{N}$ for any sequence $\mathbf{N}$ of positive even integers greater than or equal to 4.
\end{lemma}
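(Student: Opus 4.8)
The plan is a proof by contradiction, separating the two embeddings by the depth invariant localized at an end layer. Suppose $h$ is a homeomorphism of the plane with $h(\psi_\mathbf{4}(K))=\psi_\mathbf{1}(K)$, where $\psi_\mathbf{1}$ denotes the standard embedding as in Remark~\ref{remark on 1}; the argument for an arbitrary sequence $\mathbf{N}$ of positive even integers at least $4$ is word-for-word identical, as it uses only that each $n_i\ge 4$, so I carry out the case $\mathbf{N}=\mathbf{4}$. By Lemma~\ref{homeos of K} the self-homeomorphism $\psi_\mathbf{1}^{-1}\circ h\circ\psi_\mathbf{4}$ of $K$ fixes $\{E,E'\}$ setwise. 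After composing $h$ with a reflection of the plane---the reflection $(x,y)\mapsto(1-x,y)$ actually fixes $\psi_\mathbf{1}(K)$, and the other reflections replace $\psi_\mathbf{1}(K)$ by a congruent copy with the same pocket structure and depth structure used below---and relabeling $E,E'$ if necessary, I may assume $h(\psi_\mathbf{4}(E))=\{0\}\times[0,1]=\psi_\mathbf{1}(E)$ with $h(\psi_\mathbf{4}(p))=(0,0)$ and $h(\psi_\mathbf{4}(q))=(0,1)$.

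The contrast I exploit is that near the end layer the standard embedding has \emph{bounded} depth whereas $\psi_\mathbf{4}$ does not. A direct inspection of the accordion shows that every $V$- and $\Lambda$-layer of $K$ has interior disjoint from $\psi_\mathbf{1}(K)$, so every $V$- and $\Lambda$-pocket of $\psi_\mathbf{1}(K)$ is a ``simple'' region of depth $1$; by Remark~\ref{remark on 1} the schema pockets of $\psi_\mathbf{1}$ may be taken among these, and one checks similarly that no point of $\R^2\setminus\psi_\mathbf{1}(K)$ has depth exceeding $2$ with respect to $\mathbf{1}$. On the other hand, by Proposition~\ref{depth of pockets} the schema pocket $D_{\mathbf{4},i}$ has depth $4$, and $D_{\mathbf{4},i}\to\psi_\mathbf{4}(E)$ as $i\to\infty$. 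Hence for each $i$ I may choose $x_i\in D_{\mathbf{4},i}$ of depth $4$ in $D_{\mathbf{4},i}$, arranged so that $x_i$ lies in a $V$-pocket $D_{\mathbf{4},V_i}$ of $\psi_\mathbf{4}(K)$ for some $V$-layer $V_i$ of $K$ and $x_i\to(0,0)$; then $V_i\to E$. By Proposition~\ref{turning points to endpoints}, every minimal escape arc $J_i$ of $x_i$ out of $D_{\mathbf{4},i}$ has its top turning points converging to $(0,1)$ and its bottom turning points converging to $(0,0)$.

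Now I push everything forward by $h$. Since $h(\psi_\mathbf{4}(p))=\psi_\mathbf{1}(p)$, an argument as in Lemma~\ref{V pockets to V pockets} shows that for all large $i$ the image $h(x_i)$ lies in the $V$-pocket $D_{\mathbf{1},\Gamma_i}$ of $\psi_\mathbf{1}(K)$, where $\Gamma_i:=\psi_\mathbf{1}^{-1}(h(\psi_\mathbf{4}(V_i)))$, which is again a $V$-layer of $K$ by Corollary~\ref{like layers}; moreover $\Gamma_i\to E$, since $h(\psi_\mathbf{4}(V_i))\to h(\psi_\mathbf{4}(E))=\psi_\mathbf{1}(E)$. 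At the same time $h(J_i)$ is an arc in $\R^2\setminus\psi_\mathbf{1}(K)$ which, because $h$ fixes $(0,0)$ and $(0,1)$, passes through points arbitrarily near both $(0,0)$ and $(0,1)$. The remaining step---the heart of the proof---is to deduce from these two facts that, for large $i$, the depth of $h(x_i)$ with respect to $\mathbf{1}$ is at least $3$, contradicting the bound of the previous paragraph. The mechanism is that $h(x_i)$ is confined to a tiny $V$-pocket $D_{\mathbf{1},\Gamma_i}$ pressed against $\psi_\mathbf{1}(E)$ while the arc $h(J_i)$ must reach into neighborhoods of \emph{both} endpoints of the segment $\psi_\mathbf{1}(E)$; since the $V$- and $\Lambda$-layers of $\psi_\mathbf{1}(K)$ accumulating on $E$ each run across the full critical band and nest about $E$, any arc joining $h(x_i)$ to the unbounded complementary region has to thread this nest, and one shows (using Lemma~\ref{small bends} to prevent the relevant subarcs from bending spuriously) that such an arc must cross the critical line at least $3$ times.

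The main obstacle is exactly this last deduction, and what makes it subtle is that \emph{depth is not a priori preserved by plane homeomorphisms}: the critical line $y=1/2$ and the straight vertical rays appearing in the definitions of escape arc and depth belong to the coordinates, not to the topology, so one cannot merely pull escape arcs back through $h$. The reason for performing the whole comparison in a neighborhood of the end layer---where Lemma~\ref{V pockets to V pockets} and Proposition~\ref{turning points to endpoints} line the two embeddings up---is that there the distortion of $h$ far from $K$ is irrelevant, and the topological obstruction to escaping a $4$-deep schema pocket of $\psi_\mathbf{4}(K)$ can be shown to persist under $h$ and to force at least $3$ crossings of the critical line in $\psi_\mathbf{1}(K)$. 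Once $\psi_\mathbf{1}$ is known to be inequivalent to $\psi_\mathbf{4}$---and the same argument, verbatim, gives $\psi_\mathbf{1}\not\sim\psi_\mathbf{N}$ for every such sequence $\mathbf{N}$---the ``moreover'' clause follows at once.
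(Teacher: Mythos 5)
Your setup (normalizing $h$ on the end layers via Lemma \ref{homeos of K}, choosing deep points $x_i\in D_{\mathbf{4},i}$ tending to the endpoint, and invoking Lemma \ref{V pockets to V pockets}, Proposition \ref{turning points to endpoints} and Lemma \ref{small bends}) matches the paper's toolkit, but the step you yourself flag as ``the heart of the proof'' is not carried out, and the mechanism you sketch for it would fail. You want to conclude that for large $i$ the depth of $h(x_i)$ \emph{with respect to} $\mathbf{1}$ is at least $3$, on the grounds that the $V$- and $\Lambda$-layers of $\psi_\mathbf{1}(K)$ accumulating on $E$ ``nest'' and force any arc from $h(x_i)$ to the unbounded region to cross the critical line at least three times. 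That geometric claim about the standard embedding is simply false: a point of a $V$-pocket of $\psi_\mathbf{1}(K)$ near $E$ escapes along a nearly vertical arc through the top of its pocket, crossing the critical line at most once, and no point of $\R^2\setminus\psi_\mathbf{1}(K)$ has depth exceeding the small bound you yourself state. Depth is a coordinate-dependent quantity attached to each embedding separately; knowing that $h(x_i)$ sits in a tiny $V$-pocket pressed against $\psi_\mathbf{1}(E)$ and that $h(J_i)$ visits neighborhoods of both $(0,0)$ and $(0,1)$ says nothing about the \emph{minimal} escape arcs of $h(x_i)$ in the complement of $\psi_\mathbf{1}(K)$, which need not bear any relation to $h(J_i)$. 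So the contradiction cannot take the form ``a depth bound of the standard embedding is violated''; it has to be a violation of the continuity of $h$ or $h^{-1}$.

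That is exactly how the paper closes the gap, and in the opposite direction to yours: it assumes $h$ maps the standard embedding $X=\psi_\mathbf{1}(K)$ onto $Y=\psi_\mathbf{4}(K)$, picks $y_i\in D_{\mathbf{4},i}$ of depth $4$ with $y_i\to p_Y$, sets $x_i=h^{-1}(y_i)$, which by Lemma \ref{V pockets to V pockets} may be taken of depth $1$ in a $V$-pocket $D_{\mathbf{1},i}$ of $X$, and then transports the \emph{simple} vertical escape arcs $J_i$ of $x_i$ together with the fixed-length vertical arcs $A_i\to A=\{0\}\times[1,2]$ hanging above them. Lemma \ref{small bends} forces $h(J_i)$, hence $h(A_i)$, to have depth at least $3$ inside the shrinking schema pockets $D_{\mathbf{4},i}\to E_Y$, so $h(A_i)\not\to h(A)$ even though $A_i\to A$ and $h(A)$ has positive diameter attached to $Y$ only at $q_Y$ --- contradicting continuity. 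In other words, the quantitative depth estimate is made on the $\psi_\mathbf{4}$ side, where deep pockets genuinely exist, and the contradiction is a failure of convergence of image arcs, not a violated depth bound on the standard side. As written, your proposal identifies the correct invariant and the correct local picture near $E$, but the decisive argument is missing and the proposed route to it cannot be repaired without switching to a continuity-based contradiction of this kind.
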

\begin{proof}
For simplicity, we will denote the image of the standard embedding of $K$ as $X$, and the image of the embedding of $K$ under $\psi_\textbf{4}$ will be denoted as $Y$. Denote the left end layer of $X$ as $E_X$, the left end layer of $Y$ as $E_Y$, and denote the bottom and top endpoints of $E_X$ as $p_X$ and $q_X$ and the bottom and top endpoints of $E_Y$ as $p_Y$ and $q_Y$.\footnote{Even though $E_X = \{0\} \times [0,1] = E_Y$, $p_X = (0,0) = p_Y$, and $q_X = (0,1) = q_Y$, we still wish to make distinctions in reference to their corresponding embeddings.}  Recall from Remark \ref{remark on 1} that $X = \psi_\textbf{1}(K)$, where $\mathbf{1}$ here denotes the sequence of all 1's.  

Suppose $h$ is a homeomorphism of the plane onto itself so that $h(X) = Y$.  By Lemma \ref{homeos of K} and due to the symmetry of $X$, we may assume that $h(E_X) = E_Y$ with $h(p_X) = p_Y$ and $h(q_X) = q_Y$.  Let $(V_1, V_2, V_3, \ldots)$ be the sequence of $V$-layers of $Y$ whose vertices are $\psi_\mathbf{4}(s_i^{(4)})$ for each $i \in \N$.  Let $(y_1, y_2, y_3, \ldots)$ be a sequence of points such that for each $i \in \N$, $y_i$ is contained in the $V_i$-pocket of the complement of $Y$, with $y_i \to p_Y$ as $i \to \infty$ and so that $y_i \in D_{\mathbf{4},i}$, with the depth of $y_i$ being 4. 
Also as a consequence of Lemma \ref{homeos of K}, $h^{-1}(V_i)$ is a $V$-layer of $X$ for every $i \in \N$.

Again, by Remark \ref{remark on 1}, we can let, for each $i \in \N$, the horizontal crosscuts $C_{\mathbf{1},i}$ be connecting the endpoints of $h^{-1}(V_i)$ so that $D_{\mathbf{1},i}$ is the $i^\textnormal{th}$ schema pocket for $X$ having $h^{-1}(V_i)$ on its boundary.  That is, for each $i \in \N$, $D_{\mathbf{1},i}$ is the $V$-pocket, $D_{\mathbf{1},h^{-1}(V_i)}$, of $X$. For each $i \in \N$, let $x_i = h^{-1}(y_i)$.  Since $y_i \to p_Y$ as $i \to \infty$, it follows that $x_i \to p_X$ as $i \to \infty$.  By Lemma \ref{V pockets to V pockets}, we may assume for each $i \in \N$ that $x_i$ has a depth of 1 inside $D_{\mathbf{1},i}$. 

Let $J_i$ be the straight vertical arc so that $x_i$ is the bottom endpoint of $J_i$, and so that the top endpoint, $x_i'$, of $J_i$ has $1$ as its $y$-coordinate.  Note that in this case, $J_i \to E_X$ as $i \to \infty$. Furthermore, $J_i$ is a minimal escape arc from $x_i$ out of $D_{\mathbf{1},i}$ for each $i \in \N$. For each $i \in \N$, let $A_i$ be the straight arc of length 1 with $x_i'$ as its bottom endpoint. Then $A_i \to A$ where $A = \{0\} \times [1,2]$ as $i \to \infty$.

Since $h(x_i) = y_i \in D_{\mathbf{4},i}$ for each $i \in \N$, it follows that $h(J_i) \cap D_{\mathbf{4},i} \neq \emptyset$ for each $i \in \N$.  Since $h(J_i) \to E_Y$ with the endpoint $y_i$ of $h(J_i)$ having a depth of $4$ in $D_{\mathbf{4},i}$ for each $i \in \N$, and since by Lemma \ref{small bends}, each $h(J_i)$ has no points of depth less than 3 for all large enough $i$, it follows that there is $M \in \N$ such that for every $i \geq M$, the depth of $h(A_i)$ in $D_{\mathbf{4},i}$ is at least 3. Note that since $A \cap E_X = \{q_X\}$ and $\diam(A) = 1$, it follows there exists an $\eta > 0$ such that $\diam(h(A)) = \eta$ and $h(A) \cap E_Y = \{q_Y\}$. However, since $D_{\mathbf{4},i} \rightarrow E_Y$ as $i \rightarrow \infty$, and because the depth of $h(A_i)$ in $D_{\mathbf{4},i}$ is at least 3 for every $i \geq M$, it follows that $h(A_i) \not\to h(A)$. This contradicts that $h$ is a homeomorphism.

Since for each sequence $\mathbf{N}$ of positive even integers greater than or equal to 4, the depth the schema pocket $D_{\mathbf{N},i}$ is greater than or equal to 4 for every $i \in \N$, it follows that the standard planar embedding of $K$ is inequivalent to $\psi_{\mathbf{N}}$.
\end{proof}

\begin{corollary}\label{E to E}
If $\mathbf{N}$ and $\mathbf{M}$ are sequences of positive even integers greater than or equal to 4 and $h$ a homeomorphism of the plane onto itself mapping $\psi_\mathbf{N}(K)$ onto $\psi_\mathbf{M}(K)$, then $h(\psi_\mathbf{N}(E)) = \psi_\mathbf{M}(E)$.
\end{corollary}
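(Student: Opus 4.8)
The plan is to argue by contradiction and leverage the machinery already built, particularly Lemma~\ref{homeos of K}, Corollary~\ref{like layers}, and Lemma~\ref{1 is not 4} together with the depth invariants. Suppose $h$ is a homeomorphism of the plane onto itself mapping $\psi_\mathbf{N}(K)$ onto $\psi_\mathbf{M}(K)$ but with $h(\psi_\mathbf{N}(E)) \neq \psi_\mathbf{M}(E)$. By Lemma~\ref{homeos of K}, the image of an end layer of $K$ under any self-homeomorphism of $K$ is again an end layer, so $h(\psi_\mathbf{N}(E))$ must be $\psi_\mathbf{M}(E')$, the \emph{right} end layer of $\psi_\mathbf{M}(K)$. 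The strategy is then to show that the right end layer of $\psi_\mathbf{M}(K)$ cannot be ``confined'' in the way that the left end layer is: around $\psi_\mathbf{M}(E')$ the embedding $\psi_\mathbf{M}$ essentially looks like the standard embedding (the subschemata $\psi_{\mathbf{M},i}$ only accumulate near the \emph{left} end), whereas around $\psi_\mathbf{N}(E)$ the pockets $D_{\mathbf{N},i}$ accumulate with depth at least $4$. This is exactly the obstruction exploited in Lemma~\ref{1 is not 4}.

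Concretely, I would first record that near $\psi_\mathbf{M}(E')$ the set $\psi_\mathbf{M}(K)$ agrees with the standard embedding of a neighborhood of the right end of $K$; in particular there is a sequence of $V$- or $\Lambda$-pockets $D_{\mathbf{M},L_i'}$ with $L_i' \to E'$ whose depths (with respect to $\mathbf{M}$) are bounded --- indeed they can be taken to have depth $1$ as in Remark~\ref{remark on 1}. On the other hand, near $\psi_\mathbf{N}(E)$ the schema pockets $D_{\mathbf{N},i}$ satisfy $D_{\mathbf{N},i} \to \psi_\mathbf{N}(E)$ with depth $n_i \geq 4$ by Proposition~\ref{depth of pockets}. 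Pick points $x_i \in D_{\mathbf{N},i}$ of depth $4$ in $D_{\mathbf{N},i}$ with $x_i \to$ an endpoint of $\psi_\mathbf{N}(E)$. Mimicking the argument in Lemma~\ref{1 is not 4}: attach to each $x_i$ a minimal escape arc $J_i$ out of $D_{\mathbf{N},i}$ and then a straight vertical ray $A_i$; by Lemma~\ref{small bends} and Proposition~\ref{turning points to endpoints}, the relevant turning points of $h(J_i)$ and the arcs $h(A_i)$ are forced to accumulate on $h(\psi_\mathbf{N}(E)) = \psi_\mathbf{M}(E')$ while simultaneously being trapped at depth at least $3$ near $\psi_\mathbf{M}(E')$. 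But the pockets near $\psi_\mathbf{M}(E')$ have depth bounded by (at most) $1$ or $2$, so no point near $\psi_\mathbf{M}(E')$ can have depth $3$; this contradicts continuity of $h^{-1}$ exactly as in the final paragraph of the proof of Lemma~\ref{1 is not 4}.

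I expect the main obstacle to be the bookkeeping needed to make ``the schema embedding near $E'$ looks standard'' precise enough to conclude a depth bound near $\psi_\mathbf{M}(E')$, and to align this with the depth-$\geq 3$ conclusion coming from transporting the escape arcs $A_i$ through $h$. The subtlety is that an escape arc near $E'$ could in principle dip into the pockets $D_{\mathbf{M},i}$ sitting further to the interior; one must use that these interior schema pockets are bounded away from $\psi_\mathbf{M}(E')$ (since $\psi_{\mathbf{M},i}(K_i) \to \{0\}\times[0,1]$ only on the left), so an escape arc emanating from a point sufficiently close to $\psi_\mathbf{M}(E')$ and staying in a small neighborhood thereof cannot reach them. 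Once that localization is in hand, the depth computation is forced and the contradiction closes. An alternative, possibly cleaner route is to observe that if $h(\psi_\mathbf{N}(E)) = \psi_\mathbf{M}(E')$ then $\psi_\mathbf{M}$ precomposed with a self-homeomorphism of $K$ interchanging $E$ and $E'$ would be equivalent to $\psi_\mathbf{N}$, and such a swapped embedding has \emph{bounded} pocket-depth near the image of $E$, contradicting Lemma~\ref{1 is not 4} applied with roles suitably adjusted; but I would present the direct escape-arc argument since all its ingredients are already established above.
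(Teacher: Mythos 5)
Your reduction to ``deep pockets accumulate at $\psi_\mathbf{N}(E)$, only shallow (standard-looking) pockets accumulate at $\psi_\mathbf{M}(E')$'' is the right geometric picture, but the step that is supposed to produce the contradiction does not work as stated. You take minimal escape arcs $J_i$ out of $D_{\mathbf{N},i}$ and vertical rays $A_i$ on the \emph{deep} (domain) side and claim that $h(J_i)$ and $h(A_i)$ are ``trapped at depth at least $3$ near $\psi_\mathbf{M}(E')$.'' Depth is computed with respect to the target embedding $\psi_\mathbf{M}$, and, as you yourself observe, every point near $\psi_\mathbf{M}(E')$ has depth at most $2$; nothing forces images of domain arcs to be deep there, so there is no tension of the kind you invoke. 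Moreover, Lemma~\ref{small bends} only controls images of arcs on which $\pi$ is injective, and a minimal escape arc out of a depth-$4$ pocket is not such an arc. The mechanism of Lemma~\ref{1 is not 4} runs the other way: the straight vertical escape arcs must be built on the \emph{shallow} side and transported into the deep side, where Lemma~\ref{small bends} shows their images cannot shed depth and hence cannot converge to the image of the limiting vertical arc, which sticks out from an endpoint of the end layer. In your situation the shallow side is the target, so the argument has to be applied to $h^{-1}$: locate the points $h(x_i)$ in identifiable shallow pockets near $\psi_\mathbf{M}(E')$ (an analogue of Lemma~\ref{V pockets to V pockets} is needed here), attach vertical escape arcs to them in the complement of $\psi_\mathbf{M}(K)$, pull back by $h^{-1}$, and use the depth-$n_i\geq 4$ structure of $D_{\mathbf{N},i}$ to contradict continuity of $h^{-1}$. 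As written, your argument has the directions conflated, and the ``localization'' work you flag as the main obstacle (keeping escape arcs near $E'$ away from the interior schema pockets, and pinning down which shallow pockets receive the $h(x_i)$) is precisely the part that is missing.

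For comparison, the paper avoids redoing any depth analysis: if $h(\psi_\mathbf{N}(E))=\psi_\mathbf{M}(E')$, then a tail $\cl\big(\bigcup_{i\geq N}K_i\big)$ of the schema side is mapped by $h$ into $\psi_\mathbf{M}(K_1^{(1)})$, which is standardly embedded; since that tail is (by Proposition~\ref{equiv seq equiv emb}) equivalent to a schema embedding $\psi_{\mathbf{N}'}$, this contradicts the ``moreover'' part of Lemma~\ref{1 is not 4}. Your closing ``alternative route'' gestures at this but is not correct as phrased: precomposing $\psi_\mathbf{M}$ with a self-homeomorphism of $K$ swapping $E$ and $E'$ does not change the image $\psi_\mathbf{M}(K)$, so it cannot by itself change the equivalence class or produce a contradiction; what does the work is the localization statement that the image of a tail of $\psi_\mathbf{N}(K)$ lands inside the standardly embedded copy $\psi_\mathbf{M}(K_1^{(1)})$, combined with Proposition~\ref{equiv seq equiv emb} and Lemma~\ref{1 is not 4}. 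Either repair (the $h^{-1}$ escape-arc argument carried out carefully, or the paper's tail reduction) would close the proof; as submitted, the key step is unjustified.
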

\begin{proof}
Suppose instead that $h(\psi_\mathbf{N}(E)) = \psi_\mathbf{M}(E')$. Then there exists an $N \in \N$ for which 
$$h\Bigg(\psi_\mathbf{N}\bigg(\displaystyle \cl\Big( \bigcup_{i = N}^\infty K_i\Big)\bigg)\Bigg) \subset \psi_\mathbf{M}(K_1^{(1)}).$$
Let $\mathbf{N}'$ denote the subsequence of $\mathbf{N}$ having all but the first $N-1$ terms of $\mathbf{N}$.  Then $\mathbf{N}'$ and $\mathbf{N}$ are equivalent sequences, and thus, $\psi_\mathbf{N}$ and $\psi_\mathbf{N'}$ are equivalent planar embeddings of $K$ by Proposition \ref{equiv seq equiv emb}.  Since $\psi_\mathbf{N'}$ is equivalent to $\psi_\mathbf{N} \upharpoonright \cl\big( \bigcup_{i = N}^\infty K_i\big)$, so is $\psi_\mathbf{N}$.  However, $\psi_\mathbf{M} \upharpoonright K_1^{(1)}$ is equivalent to the standard embedding of $K$, with $\psi_\mathbf{N}\Big(\displaystyle \cl\big( \bigcup_{i = N}^\infty K_i\big)\Big)$ being mapped by $h$ into $\psi_\mathbf{M}(K_1^{(1)})$, contradicting Lemma \ref{1 is not 4}.
\end{proof}

\begin{lemma}\label{p to p}
If $\mathbf{N}$ and $\mathbf{M}$ are sequences of positive even integers greater than or equal to 4 and $h$ a homeomorphism of the plane onto itself mapping $\psi_\mathbf{N}(K)$ onto $\psi_\mathbf{M}(K)$, then $h(\psi_\mathbf{N}(p)) = \psi_\mathbf{M}(p)$.
\end{lemma}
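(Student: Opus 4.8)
The plan is to reduce, via Corollary~\ref{E to E}, to excluding a single ``flipped'' possibility, and then to reach a contradiction by the same device used in the proof of Lemma~\ref{1 is not 4}. By Corollary~\ref{E to E}, $h(\psi_\mathbf{N}(E))=\psi_\mathbf{M}(E)=\{0\}\times[0,1]$, so, $E$ being an arc, $h$ carries the unordered pair $\{\psi_\mathbf{N}(p),\psi_\mathbf{N}(q)\}$ onto $\{\psi_\mathbf{M}(p),\psi_\mathbf{M}(q)\}$; thus either $h(\psi_\mathbf{N}(p))=\psi_\mathbf{M}(p)$, which is the conclusion, or $h(\psi_\mathbf{N}(p))=\psi_\mathbf{M}(q)$ and $h(\psi_\mathbf{N}(q))=\psi_\mathbf{M}(p)$. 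I would assume the latter and derive a contradiction.

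Following the proof of Lemma~\ref{1 is not 4}, for each $i$ I would pick a suitable $V$-layer $V_i$ of $K_i$ whose vertex tends to $\psi_\mathbf{N}(p)$ and a point $x_i\in D_{\mathbf{N},V_i}\cap D_{\mathbf{N},i}$ with $x_i\to\psi_\mathbf{N}(p)$ and with depth at least $4$ in $D_{\mathbf{N},i}$; let $J_i$ be a minimal escape arc of $x_i$ out of $D_{\mathbf{N},i}$, and let $A_i$ be the initial unit segment of the straight vertical ray furnished by the endpoint of $J_i$, so that $A_i\to A$, where $A$ is a unit vertical segment meeting $\psi_\mathbf{N}(E)$ in exactly one endpoint of $\psi_\mathbf{N}(E)$. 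Because $h(\psi_\mathbf{N}(E))=\psi_\mathbf{M}(E)$ and the $J_i$ are vertically monotone arcs converging to $\psi_\mathbf{N}(E)$, Lemma~\ref{small bends} confines all intersections of $h(J_i)$ with the critical line $y=1/2$ to an arc of diameter tending to $0$; as in Lemma~\ref{1 is not 4}, this traps $h(A_i)$, for all large $i$, deep inside the schema pocket of $\psi_\mathbf{M}(K)$ that contains $h(x_i)$. Since $h(x_i)\to h(\psi_\mathbf{N}(p))=\psi_\mathbf{M}(q)$, that schema pocket shrinks to $\psi_\mathbf{M}(E)$, so $h(A_i)$ is forced into a vanishing neighborhood of $\psi_\mathbf{M}(E)$, contradicting $h(A_i)\to h(A)$, a fixed nondegenerate arc meeting $\psi_\mathbf{M}(E)$ only at $h(\psi_\mathbf{N}(q))=\psi_\mathbf{M}(p)$.

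The step that does not transfer verbatim from Lemma~\ref{1 is not 4}, and which I expect to be the main obstacle, is to know that $h$ carries $D_{\mathbf{N},i}$ into a genuine schema pocket of $\psi_\mathbf{M}(K)$ and keeps $h(x_i)$ deeply trapped there, rather than carrying it into an alternate schema pocket, or into a $V$- or $\Lambda$-pocket hidden in the innermost fold of a schema pocket near $\psi_\mathbf{M}(q)$. Here one must exploit the asymmetry of a schema embedding between its two end points of $\psi_\mathbf{M}(E)$: the mouths $C_{\mathbf{M},j}$ of the schema pockets each contain the vertex $\psi_\mathbf{M}(p_j)$, with $\psi_\mathbf{M}(p_j)\to\psi_\mathbf{M}(p)$, whereas the mouths of the pockets accumulating at $\psi_\mathbf{M}(q)$ are the crosscuts $C_{\mathbf{M},j}'\to(0,1)$, whose pockets have depth $2$ by Proposition~\ref{depth of pockets}, and, by Proposition~\ref{turning points to endpoints}, a minimal escape arc leaving a schema pocket $D_{\mathbf{M},j}$ has a bottom turning point tending to $\psi_\mathbf{M}(p)$. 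Thus the deep escape behavior of $x_i$ near $\psi_\mathbf{N}(p)$, which under $h$ must be realized near $\psi_\mathbf{M}(q)$, can only be matched inside a schema pocket $D_{\mathbf{M},\tau(i)}$, and a Lemma~\ref{V pockets to V pockets}-style argument, using that $h$ sends vertices to vertices and $\psi_\mathbf{N}(p)$ to $\psi_\mathbf{M}(q)$, produces the index $\tau(i)$ and closes the argument.
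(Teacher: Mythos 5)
Your reduction via Corollary~\ref{E to E} to the flipped case $h(\psi_\mathbf{N}(p))=\psi_\mathbf{M}(q)$ is exactly how the paper begins, and the shape of your intended contradiction (escape arcs plus an appended vertical segment whose image cannot converge correctly) is also the paper's. But the execution has a concrete invalid step: you assert that the minimal escape arcs $J_i$ out of $D_{\mathbf{N},i}$ ``are vertically monotone arcs'' and apply Lemma~\ref{small bends} to them directly. In Lemma~\ref{1 is not 4} that was legitimate because the domain was the \emph{standard} embedding, whose pockets are shallow, so the escape arcs could be taken to be straight vertical segments. Here the domain is a schema embedding: a minimal escape arc out of a pocket of depth $n_i\geq 4$ crosses the critical line $n_i$ times and has $n_i-1$ turning points, so $\pi\upharpoonright J_i$ is not a homeomorphism and Lemma~\ref{small bends} does not apply to $J_i$ as a whole. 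Because of this, your key assertion that $h(A_i)$ is ``trapped deep inside the schema pocket containing $h(x_i)$'' is unsupported: $A_i$ is attached at the \emph{far}, depth-$0$ end of $J_i$, not at $x_i$, and nothing you have established prevents the image of that far end from having depth $0$, in which case there is no contradiction at all. Showing that under the flip the image of the free end must have positive depth is precisely the hard part; the paper does it by decomposing the escape arc into its vertically monotone pieces, using Proposition~\ref{turning points to endpoints} to locate the images of the turning points near $p_\mathbf{M}$ and $q_\mathbf{M}$, and then proving (via Lemma~\ref{small bends} applied to the monotone pieces) that the image depths of the five marked points are pairwise distinct, alternate between the half-planes, and differ by $1$ at consecutive indices, forcing the image of the endpoint on the crosscut to have depth at least $1$. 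Your sketch omits this bookkeeping entirely, and your third paragraph, which acknowledges the obstacle, only gestures at it: the appeal to Lemma~\ref{V pockets to V pockets} cannot be made as stated, since that lemma's hypothesis is $h(\psi_\mathbf{N}(p))=\psi_\mathbf{M}(p)$, the very thing being proved (and contrary to the assumed flip), so a flipped analogue would have to be formulated and proved separately; and even granting that $h(x_i)$ lands in a schema pocket, the contradiction still needs the depth statement about the free end, not about $x_i$.

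There is also a smaller sign that the endpoint bookkeeping has slipped: the limit segment $A$ of your appended arcs $A_i$ lies below the crosscuts and meets $\psi_\mathbf{N}(E)$ at $\psi_\mathbf{N}(p)$, so under the flip $h(A)$ meets $\psi_\mathbf{M}(E)$ at $\psi_\mathbf{M}(q)=h(\psi_\mathbf{N}(p))$, not at $\psi_\mathbf{M}(p)=h(\psi_\mathbf{N}(q))$ as you wrote. The structural idea of the contradiction survives this, but together with the misapplication of Lemma~\ref{small bends} it shows the proposal, as written, does not yet constitute a proof; to repair it you essentially need the paper's turning-point and depth-alternation argument (the ``Claim'' in the paper's proof of Lemma~\ref{p to p}), after which the final convergence contradiction with $A_i\to A$ goes through as you intended.
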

\begin{proof}
For simplicity, let us denote $\psi_\mathbf{N}(p)$, $\psi_\mathbf{N}(q)$, $\psi_\mathbf{M}(p)$, and $\psi_\mathbf{M}(q)$ by $p_\mathbf{N}$, $q_\mathbf{N}$, $p_\mathbf{M}$, and $q_\mathbf{M}$, respectively. By Corollary \ref{E to E}, $h(p_\mathbf{N}) \in \{p_\mathbf{M},q_\mathbf{M}\}$.  Suppose $h(p_\mathbf{N}) = q_\mathbf{M}$.  Let $(x_1, x_2, x_3, \ldots)$ be a sequence of points converging to $p_\mathbf{N}$ with $x_i$ having a depth of 4 in $D_{\mathbf{N},i}$, and for each $i \in \N$, let $H_i$ be a sequence of minimal escape arcs from $x_i$ out of $D_{\mathbf{N},i}$.  We shall also assume that $x_i$ has the least $y$-coordinate among all other points of depth 4 along $H_i$ for each $i \in \N$.  Also for each $i \in \N$, let $x_i^{(l)}$ for $l = 0, 1, 2, 3, 4$ be the points along $H_i$ so that
    \begin{itemize}
        \item[(1)] $x_i^{(0)} = x_i$, and $x_i^{(4)}$ is the other endpoint of $H_i$ intersecting the crosscut $C_{\mathbf{N},i}$,
        \item[(2)] $x_i^{(l)}$ has depth $4-l$ in $D_{\mathbf{N},i}$ for each $l = 0, 1, 2, 3, 4$, and
        \item[(3)] $x_i^{(l)}$ is a top turning point of $H_i$ for $l = 1, 3$ and $x_i^{(2)}$ is a bottom turning point.
    \end{itemize}
    
Note that since $h(p_\mathbf{N}) = q_\mathbf{M}$, and thus, $h(q_\mathbf{N}) = p_\mathbf{M}$, condition $(3)$ gives us $h(x_i^{(2)}) \to q_\mathbf{M}$ and $h(x_i^{(l)}) \to p_\mathbf{M}$ for $l = 1, 3$ as $i \to \infty$ by Proposition \ref{turning points to endpoints}. Also, by default, we have $x_i^{(l)} \to p_\mathbf{N}$ as $i \to \infty$ for $l = 0, 4$, whence $h(x_i^{(l)}) \to q_\mathbf{M}$ as $i \to \infty$ for $l = 0,2,4$. If for each $i \in \N$ we let $H_i^{(l)}$ be the subarc of $H_i$ having as its endpoints $x_i^{(l-1)}$ and $x_i^{(l)}$ for each $l = 1, 2, 3, 4$, then for each such $l$, $(H_1^{(l)}, H_2^{(l)}, H_3^{(l)}, \ldots)$ forms a sequence of arcs converging to $\psi_\mathbf{N}(E)$ so that for each $i \in \N$, $\pi \upharpoonright H_i^{(l)}$ is a homeomorphism mapping $H_i^{(l)}$ onto $\pi(H_i^{(l)})$. It follows that, if $U$ and $W$ are the open subsets of the $xy$-plane sitting respectively above and below the critical line $y = 1/2$, then there exists an $N \in \N$ such that for every $i \geq N$, 
$$\mathcal{H}_i = \{h(x_i^{(0)}), h(x_i^{(1)}), h(x_i^{(2)}), h(x_i^{(3)}), h(x_i^{(4)})\} \subset U \cup W,$$
with members of the above set alternating between belonging to $U$ and belonging to $W$. That is, for each $i \geq N$, $h(x_i^{(l)}) \in U$ for $l = 0, 2, 4$ and $h(x_i^{(l)}) \in W$ for $l = 1, 3$.

\textit{Claim.} Only finitely many $H_i$ are mapped by $h$ into schema pockets of the complement of $\psi_\mathbf{N}(K)$ so that for each such $i$, there is a pair of distinct integers $l_i$ and $l_i'$ such that the depth of $h(x_i^{(l_i)})$ is the same as the depth of $h(x_i^{(l_i')})$ 


\textit{Proof of Claim.} Suppose there is a subsequence $i_1, i_2, i_3, \ldots$ of positive integers so that for each $j \in \N$, there exists two distinct integers $l_j$ and $l_j'$ between 0 and 4 such that the depth of $h(x_{i_j}^{(l_j)})$ is the same as that of $h(x_{i_j}^{(l_j')})$. Since members of the set $\mathcal{H}_{i_j}$ alternate between being in $U$ and being in $W$ for every $i_j \geq N$, it follows that $l_j$ and $l_j'$ can be chosen so that $|l_j -l_j'| = 2$ for every $i_j \geq N$. As a consequence of Lemma \ref{small bends}, we may consider two cases.

The first case is that for each $j$ such that $i_j \geq N$, there exists an arc $T_j$ contained in  $(\R^2\backslash\psi_\mathbf{M}(K)) \cap U$ or contained in $(\R^2\backslash\psi_\mathbf{M}(K)) \cap W$ whose endpoints are $h(x_{i_j}^{(l_j)})$ and $h(x_{i_j}^{(l_j')})$, and with $\diam(T_j) \to 0$ as $j \to \infty$. However, since for each $j \in \N$, $x_{i_j}^{(l_j)}$ and $x_{i_j}^{(l_j')}$ have depths differing by 2, it follows that $\diam(h^{-1}(T_j)) \not\to 0$ as $j \rightarrow \infty$---a contradiction to $h$ being a homeomorphism.

The second to be considered is that for some $M \geq N$ and every $j \geq M$, there is a straight vertical arc $A_j \subset (\R^2\backslash\psi_\mathbf{M}(K)) \cap W$ whose top endpoint is $h(x_{i_j}^{(l_j)})$, with $l_j \in \{1,3\}$, such that $A_j \to A = \{0\} \times [-1,0]$ as $j \to \infty$. Note that $A \cap E_Y = \{p_Y\}$ implies that $h^{-1}(A) \cap E_X = \{p_X\}$.  However, since for each $j \geq M$, the depth of $x_{i_j}^{(l_j)}$ is at least 1, it follows that $h^{-1}(A_j) \not\to h^{-1}(A)$.  This also contradicts that $h$ is a homeomorphism.

By our previous claim, it follows that for some $N' \geq N$ and every $i \geq N'$, no two different $h(x_i^{(l)})$ have the same depth within a schema pocket of the complement of $\psi_\mathbf{M}(K)$. By the previous claim combined with Proposition \ref{small bends}, it follows that for each $i \geq N'$, $h(H_i)$ is mapped into a schema pocket of the complement of $\psi_\mathbf{M}(K)$ such that the depths of each $h(x_i^{(l)})$ alternate in value, with the depths of $h(x_i^{(l)})$ and $h(x_i^{(l')})$ differing by 1 if and only if $|l-l'| = 1$.  Furthermore, the depth of each such $h(x_i^{(l)})$ is no less than $5 - l$.  Thus, in particular, for each $i \geq N'$, the depth of $h(x_i^{(4)})$ will be at least 1.

For each $i \in \N$, let $A_i$ be the straight vertical arc whose top endpoint is $x_i^{(4)}$, with $A_i \to A$, where again, $A = \{0\} \times [-1,0]$, as $i \to \infty$. Since $A \cap E_X = \{p_X\}$, it follows that $h(A) \cap E_Y = \{p_Y\}$.  However, since the depth of $h(x_i^{(4)})$ is at least 1 for each $i \geq N'$, it follows that $h(A_i) \not\to h(A)$ as $i \to \infty$---a contradiction to $h$ being a homeomorphism. Therefore, $h(p_\mathbf{N}) = p_\mathbf{M}$.
\end{proof}

\subsection{Schema Embeddings of $K$ According to Inequivalent Sequences are Inequivalent}

\

We now state and prove the lemmas needed to show that the collection of all schema embeddings of $K$ according to sequences of even positive integers greater than or equal to 4 has cardinality $\mathfrak{c}$. In what follows, let $\mathbf{N} = (n_1, n_2, n_3, \ldots)$ and $\mathbf{M} = (m_1, m_2, m_3, \ldots)$ be sequences of even positive integers greater than or equal to 4. We will let $X = \psi_\mathbf{N}(K)$ and $Y = \psi_\mathbf{M}(K)$ for simplicity.  Though both are equal to $\{0\} \times [0,1]$, we will let the left end layers of $X$ and $Y$ be denoted by $E_X$ and $E_Y$, respectively. Denote the bottom and top endpoints of $E_X$ as $p_X$ and $q_X$, respectively, and the bottom and top points of $E_Y$ as $p_Y$ and $q_Y$, respectively. We remind the reader that $\pi$ denotes the natural projection of $\R^2$ onto the $y$-axis so that $\pi(x,y) = (0,y)$ for every $(x,y) \in \R^2$.

Let $x_i$ be a point in $D_{\mathbf{N},i}$ so that the depth of $x_i$ is $n_i$ for each $i \in \N$, and with $x_i \to p_X$ as $i \to \infty$. We will also assume that for each $i \in \N$, $x_i$ has the smallest $y$-coordinate for any minimal escape arc from $x_i$ out of $D_{\mathbf{N},i}$. For each $i \in \N$, let $J_i$ be a minimal escape arc from $x_i$ out of $D_{\mathbf{N},i}$, and let $x_{i}^{(1)}, \ldots, x_{i}^{(n_i-1)} \in D_{\mathbf{N},i}$ and $x_{i}^{(n_i)} \in C_{\mathbf{N},i}$ be such that the depth of $x_i^{(l)}$ in $D_{\mathbf{N}, i}$ is $n_i - l$ for each $l = 1, \ldots, n_i$. Furthermore, let $x_{i}^{(1)}, \ldots, x_{i}^{(n_i-1)}$ be the turning points of $J_i$.  We will designate $x_i^{(0)} := x_i$ for each $i \in \N$. Let $J_{i}^{(1)}, \ldots, J_{i}^{(n_i)}$ be such that $J_{i}^{(l)}$ is the subarc of $J_i$ whose endpoints are $x_{i}^{(l-1)}$ and $x_{i}^{(l)}$ for every $l = 1, \ldots, n_i$.

Similarly, let $y_i$ be a point in $D_{\mathbf{M},i}$ so that the depth of $y_i$ is $m_i$ for each $i \in \N$, and with $y_i \to p_Y$ as $i \to \infty$. We will also assume that for each $i \in \N$, $y_i$ has the smallest $y$-coordinate for any minimal escape arc from $y_i$ out of $D_{\mathbf{M},i}$. For each $i \in \N$, let $I_i$ be a minimal escape arc from $y_i$ out of $D_{\mathbf{M},i}$, and let $y_{i}^{(1)}, \ldots, y_{i}^{(m_i-1)} \in D_{\mathbf{M},i}$ and $y_{i}^{(m_i)} \in C_{\mathbf{M},i}$ be such that the depth of $y_i^{(l)}$ in $D_{\mathbf{M}, i}$ is $m_i - l$ for each $l = 1, \ldots, m_i$. Furthermore, let $y_{i}^{(1)}, \ldots, y_{i}^{(m_i-1)}$ be the turning points of $I_i$.  We will designate $y_i^{(0)} := y_i$ for each $i \in \N$. Let $I_{i}^{(1)}, \ldots, I_{i}^{(m_i)}$ be such that $I_{i}^{(l)}$ is the subarc of $I_i$ whose endpoints are $y_{i}^{(l-1)}$ and $y_{i}^{(l)}$ for every $l = 1, \ldots, m_i$.

We shall also denote by $U$ and $W$ the open subsets of $\R^2$ sitting above and below, respectively, the critical line $y = 1/2$. Lastly, we will assume $h$ is a homeomorphism onto itself such that $h(X) = Y$.  By Corollary \ref{E to E} and Lemma \ref{p to p}, $h(E_X) = E_Y$ with $h(p_X) = p_Y$.

\begin{lemma}\label{smaller depth}
All but finitely many $h(x_i)$ are contained in schema pockets of the complement of $Y$ in which the depth of $h(x_i)$ is at least $n_i$.
\end{lemma}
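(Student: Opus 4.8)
\textit{Proof strategy.} I would argue by contradiction. Suppose the conclusion fails; then there is an infinite set $S\subseteq\N$ so that for each $i\in S$ either $h(x_i)$ lies in no schema pocket of $\R^2\setminus Y$, or it lies in one but has depth there strictly less than $n_i$. Since $h(x_i)\to h(p_X)=p_Y$ by Corollary \ref{E to E} and Lemma \ref{p to p}, and since the alternate schema pockets $D'_{\mathbf M,j}$ accumulate at $q_Y$ rather than at $p_Y$, for all large $i\in S$ the first alternative forces $h(x_i)$ to lie only in $V$- or $\Lambda$-pockets (and possibly in no pocket), hence to have depth at most $2$; and the only regions of $\R^2\setminus\psi_{\mathbf M}(K)$ of depth exceeding $2$ are contained in schema pockets. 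Thus in every case, after passing to a subsequence, I obtain an infinite set on which the depth $d_i$ of $h(x_i)$ with respect to $\mathbf M$ satisfies $d_i\le n_i-1$. The whole point will be to contradict this by showing $d_i\ge n_i$, which, since $n_i\ge 4>2$, simultaneously forces $h(x_i)$ to lie in a schema pocket of $Y$ and gives exactly the conclusion.

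The mechanism for the lower bound on $d_i$ is to show that $h$ cannot ``unfold'' the nested fold structure of $D_{\mathbf N,i}$ near $E_X$. Recall the data fixed before the lemma: $J_i=J_i^{(1)}\cup\cdots\cup J_i^{(n_i)}$ with each $J_i^{(l)}$ a graph over the $y$-axis joining $x_i^{(l-1)}$ to $x_i^{(l)}$, where $x_i^{(0)}=x_i$ has depth $n_i$, $x_i^{(l)}$ has depth $n_i-l$ in $D_{\mathbf N,i}$, and $x_i^{(n_i)}\in C_{\mathbf N,i}$. Because $x_i$ has the least $y$-coordinate on any such arc, $x_i^{(l)}$ is a top turning point for $l$ odd and a bottom turning point for $l$ even, so by Proposition \ref{turning points to endpoints} the points $x_i^{(l)}$ with $l$ odd converge to $q_X$ and those with $l$ even (together with $x_i$ and $x_i^{(n_i)}$, the latter because $C_{\mathbf N,i}$ shrinks toward $p_X$) converge to $p_X$, as $i\to\infty$. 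Consequently, for every fixed $l$ the images $h(x_i^{(l-1)})$ and $h(x_i^{(l)})$ converge to $p_Y$ and $q_Y$ in some order, hence lie on opposite sides of the critical line $y=1/2$ of $Y$ for all large $i$; and since each $J_i^{(l)}$ is a graph over the $y$-axis and $J_i^{(l)}\to E_X$, Lemma \ref{small bends} shows that any subarc of $h(J_i^{(l)})$ with both endpoints on a fixed horizontal line has diameter tending to $0$. Thus for large $i$ the arc $h(J_i)$ realizes, near $E_Y$, a genuine zig-zag with $n_i$ essential crossings of the critical line, and no cheaper route is available near $E_Y$.

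To turn this into the depth bound I would use Lemma \ref{V pockets to V pockets}: taking $V_i$ to be the $V$-layer whose $V$-pocket contains $x_i$ and realizes depth $n_i$ in $D_{\mathbf N,i}$ (the one with vertex $s_i^{(n_i)}$), Lemma \ref{V pockets to V pockets} gives, for all large $i$, that $h(x_i)$ lies in the $V$-pocket $D_{\mathbf M,\Gamma_i}$ with $\psi_{\mathbf M}(\Gamma_i)=h(\psi_{\mathbf N}(V_i))$, which in turn is contained in a schema pocket $D_{\mathbf M,j_i}$ of $Y$. If $d_i<n_i$, then a minimal escape arc of $h(x_i)$ uses at most $n_i-1$ crossings, so its $h^{-1}$-preimage is an escape arc of $x_i$ that realizes all but at most $n_i-1$ of the $n_i$ crossings forced by $x_i$'s depth away from a neighborhood of $E_X$ --- impossible, since the deep fold structure of $D_{\mathbf N,i}$ shrinks into $E_X$ (apply Lemma \ref{small bends} now to $h^{-1}$ and the monotone pieces of that minimal escape arc, which converge to $E_Y$). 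The residual possibility $d_i=n_i-1$ I would rule out exactly as in the proof of Lemma \ref{1 is not 4}: append to the shallow end of $J_i$ a straight vertical arc of fixed length, whose preimage converges to a fixed nondegenerate arc meeting $E_X$ in a single point; its $h$-image must then contain a point of positive depth inside the shrinking deep part of $D_{\mathbf M,j_i}$, contradicting Hausdorff convergence of the images to the fixed nondegenerate limit arc. Hence $d_i\ge n_i$, as desired.

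The main obstacle, as in Lemma \ref{1 is not 4}, is precisely the step where Lemma \ref{small bends} must be parlayed into the statement that $h$ preserves, near the end layer, the depth-by-depth fold structure of the schema pockets --- i.e.\ that the purely topological homeomorphism $h$ cannot shorten a minimal escape arc by exploiting the planar, embedding-dependent geometry. This is where the case analysis (images confined in a schema pocket versus images escaping toward a vertical ray, the latter being forbidden near $E$ by the small-bends estimate) does the real work; everything else is bookkeeping with Corollary \ref{E to E}, Lemma \ref{p to p}, Proposition \ref{turning points to endpoints}, and Lemma \ref{V pockets to V pockets}.
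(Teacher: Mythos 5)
Your setup (argue by contradiction, pass to a subsequence, use Corollary \ref{E to E} and Lemma \ref{p to p} to get $h(x_i)\to p_Y$, and note that points outside schema pockets have depth at most $2<4\le n_i$) matches the paper's, but the engine of your argument has a genuine gap. You propose to pull back a minimal escape arc $I_i$ of $h(x_i)$ and claim its $h^{-1}$-preimage ``is an escape arc of $x_i$'' realizing fewer than $n_i$ crossings, contradicting the depth of $x_i$. Neither half of that claim is available: the far endpoint of $h^{-1}(I_i)$ need not admit a straight vertical ray in the complement of $X$, so the preimage need not be an escape arc at all; and, more seriously, $h^{-1}$ does not bound the number of critical-line crossings --- Lemma \ref{small bends} only controls the horizontal excursions of the preimage at each fixed height, while a ``thin'' preimage can still oscillate vertically across $y=1/2$ arbitrarily many times, so no contradiction with the minimal-crossing definition of the depth of $x_i$ follows. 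You yourself flag this as ``the main obstacle \dots where the case analysis does the real work,'' but the proposal never supplies that case analysis. The auxiliary steps given are also shaky: the $V$-pocket $D_{\mathbf{M},\Gamma_i}$ furnished by Lemma \ref{V pockets to V pockets} is not automatically contained in a schema pocket of $Y$ (that containment is close to what is being proved), and Proposition \ref{turning points to endpoints} cannot be invoked on the $Y$-side since the depth of $h(x_i)$ in its pocket is not yet known to be at least $4$.

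The paper's proof runs in the opposite direction, and that is where the actual content lies: it pushes forward the fixed chain of turning points $x_i^{(0)},\dots,x_i^{(n_i)}$ of the $X$-escape arcs $J_i$, notes that for large $i$ the points $h(x_i^{(l)})$ alternate between the half-planes $U$ and $W$ and that the depth of $h(x_{i_j})$ in a schema pocket near $p_Y$ is even, and then, as a consequence of Lemma \ref{small bends}, reduces the assumption ``depth of $h(x_{i_j})$ less than $n_{i_j}$'' to two concrete cases. Either there are arcs $T_j$ in one half-plane of the complement of $Y$ joining $h(x_{i_j}^{(l_j)})$ to $h(x_{i_j}^{(l_j')})$ with $|l_j-l_j'|=2$ and $\diam(T_j)\to 0$, which is impossible because the preimage endpoints have depths differing by $2$, forcing $\diam(h^{-1}(T_j))\not\to 0$; or there are straight vertical arcs $A_j\to A=\{0\}\times[-1,0]$ attached to some $h(x_{i_j}^{(l_j)})$ with $l_j$ even, which is impossible because the depth of $x_{i_j}^{(l_j)}$ is at least $2$, forcing $h^{-1}(A_j)\not\to h^{-1}(A)$. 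Some version of this forward-image dichotomy (or an equally concrete substitute) is precisely the step your proof still needs; without it the lower bound $d_i\ge n_i$ is asserted rather than proved.
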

\begin{proof}
Suppose there is a subsequence $(x_{i_1}, x_{i_2}, x_{i_3}, \ldots)$ having the property that for each $j \in \N$, $h(x_{i_j})$ has depth within any schema pocket of the complement of $Y$ being less than $n_i$. Since $x_i \to p_X$ as $i \to \infty$, it follows that $h(x_{i_j}) \to p_Y$ as $j \to \infty$, and thus, there exists an $N \in \N$ such that for every $j \geq N$, the depth of $h(x_{i_j})$ in a schema pocket of the complement of $Y$ is even. $N$ can be taken large enough so that $h(x_{i_j}^{(l)}) \in U$ when $l$ is odd and $h(x_{i_j}^{(l)}) \in W$ when $l$ is even. We must now consider two cases.

The first case we consider is that there is an $M \geq N$ such that for each $j \geq M$, there exists an arc $T_j$ contained in $(\R^2\backslash\psi_\mathbf{M}(K)) \cap U$ or $(\R^2\backslash\psi_\mathbf{M}(K)) \cap W$ having as its endpoints $h(x_{i_j}^{(l_j)})$ and $h(x_{i_j}^{(l_j')})$, with $\diam(T_j) \allowbreak \to 0$ as $j \to \infty$, where $l_j, l_j' \in \{0, 1, 2, \ldots, n_{i_j}\}$ and $|l_j - l_j'| = 2$. However, since the difference in the depths of $x_{i_j}^{(l_j)}$ and $x_{i_j}^{(l_j')}$ is 2, this implies that $\diam(h^{-1}(T_j)) \not\to 0$ as $j \to \infty$. This contradicts that $h$ is a homeomorphism. 

The second case we consider is that for some $M \geq N$ and every $j \geq M$, there is a straight vertical arc $A_j$ whose top endpoint is $h(x_{i_j}^{(l_j)})$, with $A_j \to A = \{0\} \times [-1,0]$ as $j \to \infty$ for an even $l_j \in \{0, 2, \ldots, n_{i_j}-2\}$. Recall from previous proofs that since $A \cap E_Y = \{p_Y\}$, it follows that $h^{-1}(A) \cap E_X = \{p_X\}$. However, since for each $j \geq M$, the depth of $x_{i_j}^{(l_j)}$ is at least 2, it follows that $h^{-1}(A_j)\not\to h^{-1}(A)$ as $j \to \infty$. This is also a contradiction to $h$ being a homeomorphism.
\end{proof}

By using similar arguments in the proof of Lemma \ref{smaller depth}, one may obtain the following corollary.

\begin{corollary}\label{depth of turns}
All but finitely many $h(x_i^{(l)})$ are contained in schema pockets of the complement of $Y$ in which the depth of $h(x_i^{(l)})$ is at least $n_i - l$, where $l \in \{0, 1, \ldots, n_i\}$. 
\end{corollary}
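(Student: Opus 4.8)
The plan is to mimic the two-case structure of the proof of Lemma~\ref{smaller depth} almost verbatim, but now applied to an arbitrary turning point $x_i^{(l)}$ rather than just to $x_i = x_i^{(0)}$. Fix $l \in \{0, 1, \ldots, n_i\}$. Suppose toward a contradiction that there is a subsequence $(x_{i_1}^{(l)}, x_{i_2}^{(l)}, x_{i_3}^{(l)}, \ldots)$ such that for each $j$, the depth of $h(x_{i_j}^{(l)})$ inside every schema pocket of the complement of $Y$ is strictly less than $n_{i_j} - l$. Since $x_i \to p_X$ forces $x_i^{(l)} \to p_X$ (all turning points lie in $D_{\mathbf{N},i}$, which shrinks to $E_X$, and by our choice $x_i$ has the smallest $y$-coordinate along $J_i$, so the whole arc $J_i$ collapses to $E_X$), we get $h(x_{i_j}^{(l)}) \to p_Y$ by Lemma~\ref{p to p} and Corollary~\ref{E to E}. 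Hence for all large $j$ the depth of $h(x_{i_j}^{(l)})$ in a schema pocket of the complement of $Y$ has a fixed parity, and $N$ can be chosen large enough that the $h(x_{i_j}^{(m)})$ for $m$ near $l$ alternate between $U$ and $W$ in the standard way, using Proposition~\ref{small bends} applied to the sequences of subarcs $(I^{(m)}_{i_j})_j$ converging to $E_X$.

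The first case: for some $M$ and all $j \ge M$ there is an arc $T_j$ lying entirely in $(\R^2 \setminus Y) \cap U$ or in $(\R^2 \setminus Y) \cap W$ whose endpoints are $h(x_{i_j}^{(l)})$ and $h(x_{i_j}^{(l')})$ for an index $l'$ with $|l - l'| = 2$, and $\diam(T_j) \to 0$. Because the depths (with respect to $\mathbf{N}$) of $x_{i_j}^{(l)}$ and $x_{i_j}^{(l')}$ differ by exactly $2$, the preimages $h^{-1}(T_j)$ cannot have diameters tending to $0$, contradicting continuity of $h$ (equivalently, of $h^{-1}$). The second case: for some $M$ and all $j \ge M$ there is a straight vertical arc $A_j$ with top endpoint $h(x_{i_j}^{(l)})$ and $A_j \to A = \{0\} \times [-1,0]$; since $A \cap E_Y = \{p_Y\}$ gives $h^{-1}(A) \cap E_X = \{p_X\}$, while each $x_{i_j}^{(l)}$ sits at depth at least $2$ (if $l < n_i$) inside $D_{\mathbf{N},i_j}$, the arcs $h^{-1}(A_j)$ fail to converge to $h^{-1}(A)$, again contradicting that $h$ is a homeomorphism. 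The dichotomy between these two cases is exactly the dichotomy furnished by Lemma~\ref{small bends} (``Proposition~\ref{small bends}'' in the text): either the nearby turning-point images can be joined by a short arc avoiding $Y$ within $U$ or $W$, or one of them admits a short straight vertical escape ray, and in both situations a bounded-below diameter on the $X$-side is forced to map to a bounded-below diameter on the $Y$-side.

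For the boundary indices $l \in \{0, n_i\}$ a small modification suffices: when $l = 0$ the statement is Lemma~\ref{smaller depth} itself, so nothing new is needed; when $l = n_i$ the ``depth at least $n_i - l = 0$'' conclusion is vacuous. Thus the content is genuinely only for $0 \le l < n_i$, and for those values the two cases above apply verbatim after shifting the role of $x_i$ to $x_i^{(l)}$ and the role of $J_i$ to its terminal subarc $J_i^{(l+1)} \cup \cdots \cup J_i^{(n_i)}$, which is still a minimal escape arc (of $x_i^{(l)}$, now out of $D_{\mathbf{N},i}$, of depth $n_i - l$).

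The main obstacle I anticipate is bookkeeping the parities and the ``standard alternation'' of which $h(x_{i_j}^{(m)})$ lie in $U$ versus $W$: one must first establish, via Lemma~\ref{small bends} applied to each of the finitely many subarc-sequences $(I_{i_j}^{(m)})_j$ meeting at $x_{i_j}^{(l)}$, that for all large $j$ the image turning points near index $l$ behave like genuine turning points of a minimal escape arc inside a schema pocket of $Y$ (so that consecutive depths differ by $1$ and $|l - l'| = 2$ forces a depth difference of $2$). Once that is in place, the two contradictions are formally identical to those in Lemma~\ref{smaller depth}, and the corollary follows.
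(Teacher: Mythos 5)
Your overall plan---rerunning the two-case argument of Lemma~\ref{smaller depth} with $x_i^{(l)}$ in place of $x_i$ and the terminal subarc $J_i^{(l+1)}\cup\cdots\cup J_i^{(n_i)}$ serving as the (minimal) escape arc---is exactly the ``similar argument'' the paper intends, and the two contradictions you reach are the right ones. However, one supporting step is wrong as written: it is not true that $x_i^{(l)}\to p_X$ for every $l$, nor that $h(x_{i_j}^{(l)})\to p_Y$. The arcs $J_i$ converge to the whole segment $E_X=\{0\}\times[0,1]$, and by Proposition~\ref{turning points to endpoints} the top turning points converge to $q_X$ while the bottom turning points (together with $x_i^{(0)}$ and $x_i^{(n_i)}$) converge to $p_X$; consequently, using $h(E_X)=E_Y$ and $h(p_X)=p_Y$ from Corollary~\ref{E to E} and Lemma~\ref{p to p}, the images $h(x_{i_j}^{(l)})$ converge to $q_Y$ for odd $l$ and to $p_Y$ for even $l$. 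This dichotomy is precisely what yields the alternation between $U$ and $W$ and the fixed parity of the depths; as you wrote it, the claim that all images tend to $p_Y$ would eventually place them all in $W$, contradicting the very alternation you then invoke. The fix is to cite Proposition~\ref{turning points to endpoints} instead, and, in your second case, to allow for odd $l$ the upward vertical arcs $A_j\to\{0\}\times[1,2]$ meeting $E_Y$ only at $q_Y$ (the argument is symmetric to the downward one at $p_Y$).

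Two smaller corrections: for $l=n_i-1$ the depth of $x_{i_j}^{(l)}$ in $D_{\mathbf{N},i_j}$ is exactly $1$, not ``at least $2$'' as your parenthetical asserts; but depth at least $1$ already forces $h^{-1}(A_j)\not\to h^{-1}(A)$, exactly as in the second case of the claim inside Lemma~\ref{p to p} and in case (2.) of Lemma~\ref{larger depth}, so the contradiction persists. Also, the subarcs you feed into Lemma~\ref{small bends} should be the $J_{i_j}^{(m)}$ (the pieces of the escape arcs in $X$), not the $I_{i_j}^{(m)}$, which in the paper's notation are the pieces of the escape arcs on the $Y$-side. With these adjustments your write-up carries out the paper's one-line instruction correctly.
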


\begin{lemma}\label{larger depth}
All but finitely many $h(x_i)$ are not contained in schema pockets of the complement of $Y$ whose depth is more than $n_i$.
\end{lemma}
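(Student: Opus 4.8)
The plan is to argue by contradiction, following the template of the proof of Lemma \ref{smaller depth} but now pushing the depth count \emph{up} rather than \emph{down}. Suppose there is a subsequence $(x_{i_1}, x_{i_2}, x_{i_3}, \ldots)$ so that for every $j$, $h(x_{i_j})$ lies in a schema pocket $D_{\mathbf{M},k_j}$ of the complement of $Y$ with $m_{k_j} > n_{i_j}$. Since $x_{i_j} \to p_X$ and $h(p_X) = p_Y$ by Corollary \ref{E to E} and Lemma \ref{p to p}, we get $h(x_{i_j}) \to p_Y$; hence $k_j \to \infty$ and the pockets $D_{\mathbf{M},k_j}$ converge to $\{p_Y\}$. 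By Lemma \ref{smaller depth} we may delete finitely many indices and assume that the depth of $h(x_{i_j})$ inside $D_{\mathbf{M},k_j}$ is at least $n_{i_j}$; since $n_{i_j}$ and $m_{k_j}$ are even, this forces $m_{k_j} \ge n_{i_j}+2$.

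Next I would bring in the escape arcs $J_{i_j}$, their turning points $x_{i_j}^{(0)} = x_{i_j}, x_{i_j}^{(1)}, \ldots, x_{i_j}^{(n_{i_j})}$, and the monotone pieces $J_{i_j}^{(1)}, \ldots, J_{i_j}^{(n_{i_j})}$ set up just before the statement. For each fixed $l$, the arcs $(J_i^{(l)})_i$ converge to $\psi_\mathbf{N}(E)$ and $\pi$ is injective on each of them, so Lemma \ref{small bends} applies to the homeomorphism $h$: for every horizontal level, the maximal subarcs of $h(J_i^{(l)})$ returning to that level have diameter tending to $0$ as $i \to \infty$. Applied at the critical line $y = 1/2$, together with Corollary \ref{depth of turns}, this shows that for all large $i$ the images $h(x_i^{(0)}), \ldots, h(x_i^{(n_i)})$ alternate between the half-planes $U$ and $W$ and that each monotone piece $h(J_i^{(l)})$ effects a net change of depth of at most one; consequently the depth of $h(x_i^{(l)})$ equals $d_i + (n_i - l)$, where $d_i$ denotes the depth of the endpoint $h(x_i^{(n_i)})$. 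In particular, if $h(x_{i_j})$ itself had depth $> n_{i_j}$, then $d_{i_j} \ge 1$; but $x_{i_j}^{(n_{i_j})}$ lies just below the critical line on the crosscut $C_{\mathbf{N},i_j}$ and $x_{i_j}^{(n_{i_j})} \to p_X$, so appending the downward straight vertical arc $A_{i_j}$ with $A_{i_j} \to A := \{0\} \times [-1,0]$ yields $h(A) \cap E_Y = \{p_Y\}$ while $h(A_{i_j}) \not\to h(A)$, because the top endpoint $h(x_{i_j}^{(n_{i_j})})$ of $h(A_{i_j})$ would be trapped at positive depth in a schema pocket shrinking to $\{p_Y\}$. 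This contradicts that $h$ is a homeomorphism, exactly as in the second cases of Lemmas \ref{p to p} and \ref{smaller depth}. (As in those proofs, a parallel argument --- using an arc $T_j$ of vanishing diameter joining two $h(x_{i_j}^{(l)})$ whose depths differ by $2$, and pulling it back by $h^{-1}$ --- handles the possibility that $h(J_{i_j})$ collapses two levels instead.)

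It remains to exclude the residual possibility that $h(x_{i_j})$ has depth exactly $n_{i_j}$ while nevertheless sitting in the strictly deeper pocket $D_{\mathbf{M},k_j}$. For this I would run the mirror-image argument for $h^{-1}$: by the symmetry of the whole setup in $\mathbf{N} \leftrightarrow \mathbf{M}$, $X \leftrightarrow Y$, $h \leftrightarrow h^{-1}$, the maximal-depth point $y_{k_j} \in D_{\mathbf{M},k_j}$ (which also converges to $p_Y$) has $h^{-1}(y_{k_j})$ lying in a schema pocket of $X$ of depth at least $m_{k_j} \ge n_{i_j} + 2$; joining $h(x_{i_j})$ to $y_{k_j}$ by an arc inside the disk $D_{\mathbf{M},k_j}$ and transporting it by $h^{-1}$ places $x_{i_j}$ and $h^{-1}(y_{k_j})$ in the same bounded complementary component of $\psi_\mathbf{N}(K) \cup h^{-1}(C_{\mathbf{M},k_j})$; running the turning-point analysis of the previous paragraph inside that component then again produces a straight vertical arc near $E_X$ whose $h$-image fails to converge, the required contradiction.

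The step I expect to be the main obstacle is the depth bookkeeping of the second paragraph: making rigorous, from Lemma \ref{small bends} alone, that for all sufficiently large $i$ each monotone piece $h(J_i^{(l)})$ can raise or lower depth by at most one, so that the depths of the $h(x_i^{(l)})$ form an arithmetic progression and cannot jump past the depth of the target pocket. The residual equal-depth case in the third paragraph is the other delicate point, since there one must control not the single point $h(x_{i_j})$ but the geometry of the pocket $D_{\mathbf{M},k_j}$ around it.
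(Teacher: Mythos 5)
Your first two paragraphs are fine and match the paper's treatment of one of the two cases: after reducing (via Lemma \ref{smaller depth}) to the situation where $h(x_{i_j})$ lies in a pocket $D_{\mathbf{M},k_j}$ of depth $>n_{i_j}$, the subcase in which the depth of $h(x_{i_j})$ \emph{in that pocket} exceeds $n_{i_j}$ is killed exactly as you say, by noting that then $h(x_{i_j}^{(n_{i_j})})$ has depth at least $1$ and appending the vertical arcs $A_{i_j}\to\{0\}\times[-1,0]$, so $h(A_{i_j})\not\to h(A)$. That is the paper's case (2).

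The genuine gap is your third paragraph, i.e.\ the case where $h(x_{i_j})$ has depth exactly $n_{i_j}$ inside the strictly deeper pocket $D_{\mathbf{M},k_j}$. Your mirror argument for $h^{-1}$ only yields two facts: that $h^{-1}(y_{k_j})$ lies at depth at least $m_{k_j}$ in \emph{some} schema pocket of the complement of $X$, and that $x_{i_j}$ and $h^{-1}(y_{k_j})$ lie in the same bounded component of $\R^2\setminus(\psi_\mathbf{N}(K)\cup h^{-1}(C_{\mathbf{M},k_j}))$ --- but the latter is automatic (it is just $h^{-1}(D_{\mathbf{M},k_j})$) and carries no depth information, because that component is not a schema pocket of $X$ and its geometry is whatever $h^{-1}$ makes it; ``running the turning-point analysis inside that component'' is not defined there, since turning points and depth were set up relative to the known geometry of the schema pockets of $X$, not relative to an arbitrary pulled-back disk. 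In particular nothing you say excludes the scenario that $h^{-1}(y_{k_j})$ sits in a different (deeper) schema pocket $D_{\mathbf{N},s_j}$ with $s_j\neq i_j$, in which case the stated facts produce no conflict with $x_{i_j}$ having depth $n_{i_j}$ in $D_{\mathbf{N},i_j}$, and no vertical arc with divergent image is ever exhibited. The paper closes this case by a different, concrete mechanism: choose $z_j\in D_{\mathbf{M},k_j}$ of depth $n_{i_j}+1$ with $z_j\to q_Y$, so that $h^{-1}(z_j)\to q_X$ while the first top turning point $x_{i_j}^{(1)}$ also converges to $q_X$ (Proposition \ref{turning points to endpoints}); joining $h^{-1}(z_j)$ to $x_{i_j}^{(1)}$ by arcs $T_j\subset D_{\mathbf{N},i_j}$ with $\diam(T_j)\to 0$, the images $h(T_j)$ connect points whose depths are $n_{i_j}+1$ and $n_{i_j}-1$, so $\diam(h(T_j))\not\to 0$, contradicting continuity of $h$. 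Some comparison of a strictly deeper point of $D_{\mathbf{M},k_j}$ with a turning point of $J_{i_j}$ (or an equivalent small-arc/depth-difference argument) is what your sketch is missing, and it is precisely the step you flagged as ``delicate'' without resolving.
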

\begin{proof}
Suppose there is a subsequence $(x_{i_1}, x_{i_2}, x_{i_3}, \ldots)$ such that $h(x_{i_j})$ is in a schema pocket, $D_{\mathbf{M},k(j)}$, of the complement of $Y$ whose depth is greater than $n_{i_j}$.  By Lemma \ref{smaller depth}, this leaves two possibilities:
    \begin{itemize}
        \item[(1.)] The depth of $h(x_{i_j})$ in $D_{\mathbf{M},k(j)}$ is $n_{i_j}$.
        \item[(2.)] The depth of $h(x_{i_j})$ in $D_{\mathbf{M},k(j)}$ is greater than $n_{i_j}$.
    \end{itemize}

Assume case (1.) occurs for all but finitely many $j \in \N$ and, without loss of generality, for every $j \in \N$. As in the proof of Lemma \ref{smaller depth}, there exists an $N \in \N$ such that for every $j \geq N$, $\{h(x_{i_j}), h(x_{i_j}^{(1)}), \ldots, h(x_{i_j}^{(n_{i_j})})\} \subset U \cup W$, the members of this set alternating between $U$ and $W$. By Lemma \ref{small bends}, we may assume $N$ is large enough so that for each $j \geq N$, the difference between the depths of $h(x_{i_j}^{(l)})$ and $h(x_{i_j}^{(l')})$ in $D_{\mathbf{M},k(j)}$ is $1$ if and only if $|l - l'| = 1$.  Furthermore, by Corollary \ref{depth of turns}, the depth of every $h(J_{i_j}^{(l)})$ is that of $h(x_{i_j}^{(l-1)})$, which is $n_{i_j} - l + 1$, for each $j \geq N$ and each $l \in \{ 1, \ldots, n_{i_j}\}$.

For each $j \geq N$, let $z_j$ be a point in $D_{\mathbf{M},k(j)}$ whose depth is $n_{i_j}+1$ and so that $z_j \to q_Y$ as $j \to \infty$. Note that $h^{-1}(z_j) \to q_X$ as $j \to \infty$.  Also for each $j \geq N$, let $Z_j$ be an arc in $D_{\mathbf{M},k(j)}$ whose endpoints are $h(x_{i_j})$ and $z_j$ and so that the depth of $Z_j$ in $D_{\mathbf{M},k(j)}$ is also $n_{i_j}+1$. 

It follows, as a consequence of Lemma \ref{small bends} on the collection of all $Z_j$, that there exists a positive integer $M \geq N$ such that for every $j \geq M$, there exists an arc $T_j$ in $D_{\mathbf{N},i_j}$ whose endpoints are $h^{-1}(z_j)$ and $x_{i_j}^{(1)}$, with $\diam(T_j) \rightarrow 0$ as $j \to \infty$.  However, since the depth of $z_j$ in $D_{\mathbf{M},k(j)}$ differs by $2$ from the depth of $h(x_{i_j}^{(1)})$ in $D_{\mathbf{M},k(j)}$ for every $j \geq M$, it follows that $\diam(h(T_j)) \not\to 0$ as $j \to \infty$, a contradiction to $h$ being a homeomorphism.  

Assume now that case (2.) occurs for all but finitely many $j \in \N$ and, without loss of generality, for every $j \in \N$. Let $N$ be similarly defined as in the proof negating case (1.) above.  Since the depth of $h(x_{i_j})$ in $D_{\mathbf{M},k(j)}$ is greater than $n_{i_j}$ for every $j \in \N$, this implies that for every $j \geq N$, the depth of $h(x_{i_j}^{(n_{i_j})})$ in $D_{\mathbf{M},k(j)}$ is greater than or equal to $1$.  

For each $j \geq N$, let $A_j$ be the straight vertical arc having $x_{i_j}^{(n_{i_j})}$ as its top endpoint with $A_j \to A = \{0\} \times [-1,0]$ as $j \to \infty$. Again, since $A \cap E_X = \{p_X\}$, it follows that $h(A) \cap E_Y = \{p_Y\}$.  However, for each $j \geq N$, since the depth of $h(x_{i_j}^{(n_{i_j})})$ in $D_{\mathbf{M},k(j)}$ is greater than or equal to $1$, this implies $h(A_j) \not \to h(A)$ as $j \to \infty$. This also contradicts that $h$ is a homeomorphism.   
\end{proof}

\begin{lemma}\label{different pockets}
All but finitely many pairs $h(x_i)$ and  $h(x_j)$, where $i \neq j$, are contained in different schema pockets of the complement of $Y$.
\end{lemma}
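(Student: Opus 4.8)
The plan is to argue by contradiction, following the pattern of Lemmas \ref{smaller depth} and \ref{larger depth}. Suppose the assertion fails, so that infinitely many pairs $h(x_i),h(x_j)$ with $i\neq j$ lie in a common schema pocket of the complement of $Y$. Since the closure of each schema pocket $D_{\mathbf{M},k}$ is compact and omits $p_Y$ (its boundary is contained in $Y\cup C_{\mathbf{M},k}$, and neither $\psi_\mathbf{M}(K_k)$ nor $C_{\mathbf{M},k}$ contains $p_Y$), while $h(x_i)\to p_Y$, each such pocket contains only finitely many of the points $h(x_i)$. Hence the offending pairs involve infinitely many distinct pockets, and I may pass to a subsequence giving indices $i_s<j_s$ and schema pockets $D_{\mathbf{M},k_s}$ with $k_1<k_2<k_3<\cdots$ (so $k_s\to\infty$) and $h(x_{i_s}),h(x_{j_s})\in D_{\mathbf{M},k_s}$. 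By Lemmas \ref{smaller depth} and \ref{larger depth} together with Proposition \ref{depth of pockets}, after discarding finitely many further terms $h(x_{i_s})$ and $h(x_{j_s})$ each have depth exactly $n_{i_s}=n_{j_s}=m_{k_s}$ in $D_{\mathbf{M},k_s}$; that is, both are points of maximal depth in that pocket.

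The heart of the argument is to join these two maximal-depth points by a small arc inside the pocket. Concretely, I would produce an arc $\gamma_s\subset\R^2\setminus Y$ lying in $D_{\mathbf{M},k_s}$ along which the depth is constantly $m_{k_s}$, with endpoints $h(x_{i_s})$ and $h(x_{j_s})$, and with $\gamma_s\to\{p_Y\}$ as $s\to\infty$. This is where the explicit geometry of a schema pocket from Definition \ref{schema Embedding} enters: the set of points of depth $m_{k_s}$ in $D_{\mathbf{M},k_s}$ --- determined by the folded, reflected copies $K_{k_s}^{(1)},\dots,K_{k_s}^{(2m_{k_s})}$ of $K$ --- is a single path component, a ``deepest corridor'' one end of which collapses to $p_Y$ as $k_s\to\infty$; since $h(x_{i_s})$ and $h(x_{j_s})$ converge to $p_Y$ and lie on this corridor (and $x_{i_s},x_{j_s}$ were chosen to have least $y$-coordinate along their escape arcs, pinning their images to the low end of the corridor), a sub-arc of the corridor joining them shrinks to $\{p_Y\}$.

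It then remains to transport this to a contradiction by $h^{-1}$. The arc $h^{-1}(\gamma_s)\subset\R^2\setminus X$ joins $x_{i_s}\in D_{\mathbf{N},i_s}$ to $x_{j_s}\in D_{\mathbf{N},j_s}$; as $i_s\neq j_s$ these are distinct --- hence disjoint --- schema pockets of the complement of $X$, so $h^{-1}(\gamma_s)$ must leave $D_{\mathbf{N},i_s}$, and, avoiding $X$, it does so through the crosscut $C_{\mathbf{N},i_s}$. Since the depth of $x_{i_s}$ in $D_{\mathbf{N},i_s}$ is $n_{i_s}\geq 4\geq 1$, the subarc of $h^{-1}(\gamma_s)$ from $x_{i_s}$ to its first point on $C_{\mathbf{N},i_s}$ must meet the critical line $y=1/2$; in particular $h^{-1}(\gamma_s)$ contains a point with $y$-coordinate $1/2$. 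As $h^{-1}(\gamma_s)$ also contains $x_{i_s}$, whose $y$-coordinate tends to $0$ with $s$, it follows that $\diam(h^{-1}(\gamma_s))$ is bounded below by $|1/2-(\textnormal{$y$-coordinate of }x_{i_s})|$, which tends to $1/2$. But $\gamma_s\to\{p_Y\}$ and $h^{-1}$ is continuous (uniformly so on a compact set containing all the $\gamma_s$), so $h^{-1}(\gamma_s)\to\{h^{-1}(p_Y)\}=\{p_X\}$ and $\diam(h^{-1}(\gamma_s))\to 0$ --- a contradiction. Hence only finitely many pairs $h(x_i),h(x_j)$ with $i\neq j$ can share a schema pocket of the complement of $Y$.

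I expect the middle paragraph to be the main obstacle: it requires reading off from the construction of $\psi_\mathbf{M}$ that the maximal-depth set of a schema pocket is a single path component whose ``low'' end collapses to $p_Y$, together with the bookkeeping that $h(x_{i_s})$ and $h(x_{j_s})$ genuinely sit near that end. If pinning down the exact location of the deepest corridor proves awkward, the dichotomy used in Lemmas \ref{smaller depth}--\ref{larger depth} can be substituted: via Lemma \ref{small bends} one shows that for large $s$ either there is a vanishing arc in $U$ or in $W$ joining the two equal-depth points $h(x_{i_s}),h(x_{j_s})$ --- the case just described --- or there is a straight vertical arc with top endpoint one of them lying in $\R^2\setminus Y$, contradicting that its depth is positive; either way $h$ cannot be a homeomorphism.
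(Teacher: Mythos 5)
Your proposal is correct and follows essentially the same route as the paper: after forcing $n_{i_s}=n_{j_s}$ equal to the common pocket's depth via Lemmas \ref{smaller depth} and \ref{larger depth}, you join the two image points by a complementary arc of vanishing diameter and observe that its preimage, connecting the distinct pockets $D_{\mathbf{N},i_s}$ and $D_{\mathbf{N},j_s}$, must reach the critical line and so cannot shrink, contradicting that $h$ is a homeomorphism. The only differences are cosmetic: the paper runs this contradiction simultaneously at every turning-point level $l$ using the shortest arcs $T_m^{(l)}$ (whose vanishing diameter it asserts with the same geometric informality as your ``deepest corridor'' claim), whereas you use only the deepest points and spell out the lower bound on the preimage diameters explicitly.
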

\begin{proof}
Suppose there exists a subsequences $(x_{i_1}, x_{i_2}, x_{i_3}, \ldots)$ and a subsequence $(x_{j_1}, x_{j_2}, x_{j_3}, \ldots)$ such that $i_m \neq j_m$ for every $m \in \N$ and so that $h(x_{i_m})$ and $h(x_{j_m})$ are contained in the same schema pocket of the complement of $Y$.  Again, as in the previous proofs, there exists an $M(i) \in \N$ such that for every $m \geq M(i)$, $\{h(x_{i_m}), h(x_{i_m}^{(1)}), \ldots, h(x_{i_m}^{(n_{i_m})})\} \subset U \cup W$, the members of this set alternating between $U$ and $W$.  Similarly, there exists an $M(j) \in \N$ such that for every $m \geq M(j)$, $\{h(x_{j_m}), h(x_{j_m}^{(1)}), \ldots, h(x_{j_m}^{(n_{j_m})})\} \subset U \cup W$, the members of this set alternating between $U$ and $W$. Furthermore, for each $m \geq M(i)$, the difference between the depths of $h(x_{i_m}^{(l)})$ and $h(x_{i_m}^{(l')})$ is $1$ if and only if $|l - l'| = 1$, with the depth of every $h(J_{i_m}^{(l)})$ being that of $h(x_{i_m}^{(l-1)})$, and for each $m \geq M(j)$, the difference between the depths of $h(x_{j_m}^{(l)})$ and $h(x_{j_m}^{(l')})$ is $1$ if and only if $|l - l'| = 1$, with the depth of every $h(J_{j_m}^{(l)})$ is being that of $h(x_{j_m}^{(l-1)})$.  

Let $M = \max\{M(i), M(j)\}$.  By Lemma \ref{smaller depth} and Lemma \ref{larger depth}, the depth of $h(x_{i_m})$ and $h(x_{j_m})$ are the same as the depths of $x_{i_m}$ and $x_{j_m}$ in $D_{\mathbf{N},i_m}$ and $D_{\mathbf{N},j_m}$, respectively, for every $m \geq M$.  That is, $n_{i_m} = n_{j_m}$ for every $m \geq M$.  In particular, for each $m \geq M$ and each $l = 1, \ldots, n_{i_m}$, the depth of $h(x_{i_m}^{(l)})$ is $n_{i_m} - l$, and the depth of $h(J_{i_m}^{(l)})$ is $n_{i_m} - l + 1$.  These are also the depths of the corresponding $h(x_{j_m}^{(l)})$ and $h(J_{j_m}^{(l)})$.   

For every $m \geq M$ and for every $l \in \{1, \ldots, n_{i_m}\}$, let $T_m^{(l)}$ be the shortest arc contained in the complement of $Y$ so that the endpoints of $T_m^{(l)}$ are $h(x_{i_m}^{(l)})$ and $h(x_{j_m}^{(l)})$. Then for every such $m$ and every such $l$, $\diam(T_m^{(l)}) \rightarrow 0$ as $m \rightarrow \infty$. However, because $i_m \neq j_m$ for every $m \geq M$, it follows that for every $m \geq M$ and every $l \in \{1, \ldots, n_{i_m}\}$, $\diam(h^{-1}(T_m^{(l)})) \not\to 0$ as $m \to \infty$. This contradicts that $h$ is a homeomorphism.
\end{proof}

\begin{lemma}\label{equivalent sequences}
$\mathbf{N}$ and $\mathbf{M}$ are equivalent sequences of positive integers.
\end{lemma}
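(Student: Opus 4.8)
I keep the notation fixed just before Lemma~\ref{smaller depth}: $X=\psi_\mathbf{N}(K)$, $Y=\psi_\mathbf{M}(K)$, and $h$ is a homeomorphism of the plane with $h(X)=Y$; by Corollary~\ref{E to E} and Lemma~\ref{p to p} we already have $h(E_X)=E_Y$ and $h(p_X)=p_Y$, and the points $x_i\in D_{\mathbf{N},i}$ (with $x_i\to p_X$), their minimal escape arcs $J_i$, and the turning points $x_i^{(l)}$ are as set up there. The plan is to read off from $h$ an index function $k$ with $h(x_i)\in D_{\mathbf{M},k(i)}$, to check that $k$ transports the pocket depth $n_i$ to the pocket depth $m_{k(i)}$, and then to show that $k$ is eventually the translation $i\mapsto i+c$; this last fact produces a common tail of $\mathbf{N}$ and $\mathbf{M}$, hence their equivalence in the sense of Definition~\ref{inequivalent sequences}.

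First I would pin down the pocket containing $h(x_i)$. By Lemma~\ref{smaller depth} there is an $I_0$ so that for every $i\ge I_0$ the point $h(x_i)$ lies in a unique schema pocket $D_{\mathbf{M},k(i)}$ of the complement of $Y$ in which its depth is at least $n_i$; since the depth of $D_{\mathbf{M},k(i)}$ with respect to $\mathbf{M}$ is $m_{k(i)}$ by Proposition~\ref{depth of pockets}, this gives $m_{k(i)}\ge n_i$, while Lemma~\ref{larger depth} gives $m_{k(i)}\le n_i$; hence $m_{k(i)}=n_i$ for all $i\ge I_0$. Enlarging $I_0$, Lemma~\ref{different pockets} makes $k$ injective on $\{i\ge I_0\}$. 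Finally, since $x_i\to p_X$ we get $h(x_i)\to p_Y$, and since the closure of each fixed schema pocket $D_{\mathbf{M},j}$ stays a positive distance from $p_Y$, no index $j$ is taken by $k$ infinitely often; together with injectivity this forces $k(i)\to\infty$.

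The heart of the proof is to upgrade ``$k$ is eventually injective with $k(i)\to\infty$'' to ``$k(i+1)=k(i)+1$ for all large $i$.'' Here one uses that the schema pockets $D_{\mathbf{N},1},D_{\mathbf{N},2},\ldots$ are linearly ordered: they sit in the consecutive pieces $\psi_\mathbf{N}(K_1),\psi_\mathbf{N}(K_2),\ldots$, which are joined at the vertices $\psi_\mathbf{N}(p_i)$ and accumulate to $p_X$, so that $D_{\mathbf{N},i+1}$ lies, along $\psi_\mathbf{N}(K)$, between $D_{\mathbf{N},i}$ and $p_X$; and the same holds for the pockets $D_{\mathbf{M},j}$ of $Y$. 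I would join $x_i$ to $x_{i+1}$ by an arc passing out of $D_{\mathbf{N},i}$ near $\psi_\mathbf{N}(p_i)$ and then into $D_{\mathbf{N},i+1}$, chosen so as to converge to $E_X$, and then run the depth bookkeeping of Lemmas~\ref{smaller depth}--\ref{different pockets} along its $h$-image, with Lemma~\ref{small bends} controlling the monotone subarcs; this forces $h$ to respect the above order for all large $i$, i.e.\ $k$ to be eventually increasing, and in fact to carry consecutive pockets to consecutive pockets, giving $|k(i+1)-k(i)|\le 1$. (Alternatively, running the whole analysis also for $h^{-1}$ and the symmetrically defined points $y_j\in D_{\mathbf{M},j}$ shows that $k$ is, on a tail of $\N$, an increasing bijection onto a tail of $\N$ --- which is likewise a translation.) Either way, injectivity and $k(i)\to\infty$ then give $k(i+1)=k(i)+1$ for all $i$ past some $I_1\ge I_0$.

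Granting this, $k(i)=k(I_1)+(i-I_1)$ for $i\ge I_1$, so $n_i=m_{k(i)}=m_{\,k(I_1)+(i-I_1)}$ for every $i\ge I_1$; deleting the first $I_1-1$ terms of $\mathbf{N}$ and the first $k(I_1)-1$ terms of $\mathbf{M}$ leaves two identical sequences, whence $\mathbf{N}$ and $\mathbf{M}$ are equivalent. The main obstacle is the third paragraph: one must make precise the linear ordering of the schema pockets and verify that $h$ preserves it, and the only available tools are the depth estimates of Lemmas~\ref{smaller depth}--\ref{different pockets} and the convergence control of Lemma~\ref{small bends}, so this step amounts to rerunning those arguments once more along arcs that tie consecutive pockets together (and along the corresponding arcs for $h^{-1}$).
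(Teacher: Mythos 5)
Your opening bookkeeping is sound and coincides with the paper's: Lemma \ref{smaller depth} together with Lemma \ref{larger depth} (and Proposition \ref{depth of pockets}) pins $h(x_i)$, for all large $i$, in a schema pocket $D_{\mathbf{M},k(i)}$ with $m_{k(i)}=n_i$; Lemma \ref{different pockets} gives injectivity of $k$; and $h(x_i)\to p_Y$ gives $k(i)\to\infty$. The final paragraph (eventual translation implies common tails implies equivalence) is also fine. But the decisive step --- that $k$ is eventually the shift $i\mapsto i+c$ --- is exactly the part you leave as a sketch and yourself call ``the main obstacle,'' and neither of your two suggested routes is carried out. Your primary route (tie $x_i$ to $x_{i+1}$ by an arc and rerun the depth bookkeeping along its $h$-image) is vague and is also more machinery than needed: the paper gets monotonicity, $j_i<j_{i+1}$, for free from Proposition \ref{preserve order}, since schema pockets are attached to specific layers whose order any homeomorphism of $K$ onto itself must respect.

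The genuinely missing idea is the argument that no index can be skipped. Your parenthetical alternative (``run the whole analysis for $h^{-1}$ and the points $y_j$'') is indeed the paper's route, but running the symmetric analysis only produces a second injective, depth-preserving index map for $h^{-1}$; it does not by itself show that $k$ is onto a tail. The paper closes this as follows: if $j_{i+1}-j_i>1$ along a subsequence, pick intermediate indices $u$ with $j_i<u<j_{i+1}$ and consider $h^{-1}(y_u)$. By Proposition \ref{preserve order} this point must land in a pocket of the complement of $X$ lying \emph{between} the consecutive schema pockets $D_{\mathbf{N},i}$ and $D_{\mathbf{N},i+1}$; by Lemma \ref{different pockets} and Lemma \ref{V pockets to V pockets} the only candidates there are $V$-pockets not contained in any schema pocket, which have depth at most $2$, whereas $y_u$ has depth $m_u\geq 4$, contradicting Lemma \ref{smaller depth} applied to $h^{-1}$. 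Without this specific mechanism (order preservation to localize the preimage, plus the shallow-depth bound on the intermediate $V$-pockets against the bound $\geq 4$), your proposal does not actually establish $k(i+1)=k(i)+1$, so as written the proof is incomplete at its central point.
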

\begin{proof}
By Lemmas \ref{smaller depth} and \ref{larger depth}, there exists a positive integer $N$ such that for every $i \geq N$, $h(x_i)$ is contained in a schema pocket $D_{\mathbf{M},j_i}$ of the complement of $Y$ with depth $n_i$ in which the depth of $h(x_i^{(l)}) = n_i - l$ for every $l \in \{0, 1, \ldots, n_i\}$ and so that the depth of $h(J_i^{(l)})$ is that of $h(x_i^{(l-1)})$.  Furthermore, such a sufficiently large $N$ can also satisfy $j_i \neq j_k$ whenever $i \neq k$ by Lemma \ref{different pockets}, with $j_i < j_{i+1}$ for every $i \geq N$ by Proposition \ref{preserve order}. Similarly, there exists a positive integer $M$ such that for every $i \geq M$, $h^{-1}(y_i)$ is contained in a schema pocket $D_{\mathbf{N},s_i}$ of the complement of $X$ with depth $m_i$ in which the depth of $h^{-1}(y_i^{(l)}) = m_i - l$ for every $l = 0, 1, \ldots, m_i$ and so that the depth of $h^{-1}(I_i^{(l)})$ is that of $h^{-1}(y_i^{(l-1)})$.  Furthermore, such a sufficiently large $M$ can also satisfy $s_i \neq s_k$ whenever $i \neq k$ by Lemma \ref{different pockets}, with $s_i < s_{i+1}$ for every $i \geq M$ by Proposition \ref{preserve order}.

Let us assume that $N = \max\{N,M\}$.  By the lemmas listed in the previous paragraph, it follows that for every $i \geq N$, $j_{i+1} - j_i = 1$.  Indeed, suppose that there is a sequence $i_1, i_2, i_3, \ldots$ such that $j_{i_k+1} - j_{i_k} > 1$ for every $k \in \N$.  Then there exists a sequence of integers $(u_1, u_2, u_3, \ldots)$ such that $j_{i_k} < u_k < j_{i_{k+1}}$ for each $k \in \N$.  By Proposition \ref{preserve order}, it follows that for every $k \in \N$, $h^{-1}(y_{u_k})$ must be mapped to a pocket of the complement of $X$ between $D_{\mathbf{N},i_k}$ and $D_{\mathbf{N},i_k+1}$.  However, as a consequence of Lemma \ref{different pockets} as well as Lemma \ref{V pockets to V pockets}, the only option would be for $h^{-1}(y_{u_k})$ to be mapped to a $V$-pocket of $X$ not contained in any schema disk of the complement of $X$.  Such a pocket would have a depth of at most 2, and since the depth of $y_{u_k}$ is at least 4 for each $k$, this would contradict Lemma \ref{smaller depth} and thus proves our claim.

It follows that for every $i \geq N$, pockets $D_{\mathbf{N},i}$ are mapped so that their depth in $D_{\mathbf{M},j_i}$ is $n_i$, with the depth of $D_{\mathbf{M},j_i}$ being $n_i$ as well.  Thus, for each $i \geq N$ and each $k \geq j_N$, we have that $n_i = m_k$.  Therefore, $\mathbf{N}$ and $\mathbf{M}$ are equivalent sequences of positive even integers greater than or equal to 4. 
\end{proof}

\begin{theorem}\label{inequivalent embeddings}
If $\mathbf{N}$ and $\mathbf{M}$ are inequivalent sequences of positive even integers greater than or equal to 4, then $\psi_\mathbf{N}$ and $\psi_\mathbf{M}$ are inequivalent planar embeddings of $K$.
\end{theorem}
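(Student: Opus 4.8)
The plan is to prove the contrapositive of the statement, which at this stage is essentially a bookkeeping exercise built on Lemma~\ref{equivalent sequences}. Suppose $\psi_\mathbf{N}$ and $\psi_\mathbf{M}$ are equivalent; then there is a homeomorphism $h$ of the plane onto itself with $h(\psi_\mathbf{N}(K)) = \psi_\mathbf{M}(K)$. Writing $X = \psi_\mathbf{N}(K)$ and $Y = \psi_\mathbf{M}(K)$, I would first normalize $h$: by Corollary~\ref{E to E} the left end layer $E_X$ of $X$ is carried onto the left end layer $E_Y$ of $Y$, and by Lemma~\ref{p to p} the bottom endpoint $p_X$ is carried onto $p_Y$. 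Note that, unlike the standard embedding, the schema embeddings are not symmetric across the critical line, so no preliminary composition of $h$ with a reflection is required; the two cited results already place $h$ in exactly the normalized situation set up just before Lemma~\ref{smaller depth}, so all of the auxiliary data introduced there is at our disposal.

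With $h$ in this form I would simply chain together the lemmas already proved. Recalling the points $x_i \in D_{\mathbf{N},i}$ of depth $n_i$ tending to $p_X$, together with their minimal escape arcs $J_i$, turning points $x_i^{(l)}$, and subarcs $J_i^{(l)}$ (and the analogous data for $Y$), Lemmas~\ref{smaller depth} and~\ref{larger depth} force $h(x_i)$ to lie, for all sufficiently large $i$, in a schema pocket $D_{\mathbf{M},j_i}$ of the complement of $Y$ whose depth is exactly $n_i$, with the depths of the images of the turning points and their subarcs tracked accordingly by Corollary~\ref{depth of turns}; Lemma~\ref{different pockets} shows the indices $j_i$ are eventually pairwise distinct; Proposition~\ref{preserve order} makes $i \mapsto j_i$ order-preserving; and Lemma~\ref{equivalent sequences} assembles all of this into the conclusion that, after deleting finitely many initial terms, the tails of $\mathbf{N}$ and $\mathbf{M}$ coincide, i.e.\ $\mathbf{N}$ and $\mathbf{M}$ are equivalent in the sense of Definition~\ref{inequivalent sequences}. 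This contradicts the hypothesis that $\mathbf{N}$ and $\mathbf{M}$ are inequivalent, so no such $h$ exists and $\psi_\mathbf{N}$ and $\psi_\mathbf{M}$ are inequivalent planar embeddings of $K$.

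Because Lemma~\ref{equivalent sequences} already does the heavy lifting, there is no substantial obstacle left at the level of this theorem; the one point that needs to be stated with care is the reduction to the normalized $h$ via Corollary~\ref{E to E} and Lemma~\ref{p to p}, which is what lets Lemma~\ref{equivalent sequences} be applied with no loss of generality. The genuine difficulty was in the preceding depth-tracking lemmas --- controlling, through Lemma~\ref{small bends}, the behavior of minimal escape arcs and their turning points under $h$ as one approaches the end layer $E$ --- and that work is already behind us.
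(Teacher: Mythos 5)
Your proposal is correct and follows essentially the same route as the paper: the paper's proof of Theorem~\ref{inequivalent embeddings} is exactly the contrapositive of Lemma~\ref{equivalent sequences}, whose standing hypotheses were already normalized via Corollary~\ref{E to E} and Lemma~\ref{p to p}. Your additional remarks on chaining Lemmas~\ref{smaller depth}, \ref{larger depth}, and \ref{different pockets} simply restate the internal structure of Lemma~\ref{equivalent sequences}, so nothing differs in substance.
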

\begin{proof}
This follows directly as a consequence of Lemma \ref{equivalent sequences}, where we preemptively assumed that $\psi_\mathbf{N}$ and $\psi_\mathbf{M}$ were equivalent planar embeddings of $K$ by assuming there was a homeomorphism $h$ of the plane onto itself mapping $\psi_\mathbf{N}(K)$ onto $\psi_\mathbf{M}(K)$, leading us to conclude that $\mathbf{N}$ and $\mathbf{M}$ are equivalent sequences.  Therefore, by contraposition, we conclude that if $\mathbf{N}$ and $\mathbf{M}$ are inequivalent sequences, $\psi_\mathbf{N}$ and $\psi_\mathbf{M}$ must be inequivalent planar embeddings of $K$.
\end{proof}

\begin{corollary}\label{uncountable embeddings}
There are $\mathfrak{c}$-many mutually inequivalent planar embeddings of $K$, each whose image has the same accessible points as in the standard embedding of $K$.
\end{corollary}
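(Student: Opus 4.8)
The plan is to realize the required collection as a subfamily of the schema embeddings $\{\psi_\mathbf{N}\}$ constructed in this section, indexed by a large enough set of pairwise inequivalent index sequences. \emph{Step 1: extract $\mathfrak{c}$-many inequivalent sequences from the admissible alphabet.} The embedding $\psi_\mathbf{N}$ is defined only when $\mathbf{N}$ is a sequence of positive even integers $\geq 4$, whereas Lemma \ref{uncountable sequences} supplies $\mathfrak{c}$-many mutually inequivalent sequences of \emph{arbitrary} positive integers. I would bridge this gap by pushing the sequences of Lemma \ref{uncountable sequences} forward under the map $(a_1, a_2, a_3, \ldots) \mapsto (2a_1 + 2, 2a_2 + 2, 2a_3 + 2, \ldots)$. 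Its values are positive even integers $\geq 4$; it is injective; and, since the shifted tails $(2a_n + 2, 2a_{n+1} + 2, \ldots)$ and $(2b_n + 2, 2b_{n+1} + 2, \ldots)$ coincide exactly when the tails $(a_n, a_{n+1}, \ldots)$ and $(b_n, b_{n+1}, \ldots)$ do, it both preserves and reflects the equivalence relation of Definition \ref{inequivalent sequences}. Hence the images form a set $\mathcal{S}_0$ of $\mathfrak{c}$-many pairwise inequivalent sequences of positive even integers $\geq 4$.

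\emph{Step 2: pass to embeddings.} For each $\mathbf{N} \in \mathcal{S}_0$, the schema embedding $\psi_\mathbf{N}$ is a genuine planar embedding of $K$ (the proposition stated just after Definition \ref{schema Embedding}). If $\mathbf{N} \neq \mathbf{M}$ in $\mathcal{S}_0$, these sequences are inequivalent, so by Theorem \ref{inequivalent embeddings} the embeddings $\psi_\mathbf{N}$ and $\psi_\mathbf{M}$ are inequivalent; in particular $\psi_\mathbf{N} \neq \psi_\mathbf{M}$, since equal maps would have the same image and the identity would be a witnessing planar homeomorphism. Thus $\mathbf{N} \mapsto \psi_\mathbf{N}$ is an injection of $\mathcal{S}_0$ onto $\mathcal{E}_0 = \{\psi_\mathbf{N} : \mathbf{N} \in \mathcal{S}_0\}$, which is therefore a collection of mutually inequivalent planar embeddings of $K$ of cardinality $\mathfrak{c}$.

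\emph{Step 3: identify the accessible set.} Finally I would check that every member of $\mathcal{E}_0$ has exactly the accessible points of the standard embedding of $K$: all points of the end layers, all points of the $V$- and $\Lambda$-layers, and precisely the two endpoints of each interior layer of continuity. Proposition \ref{layers accessible} gives that under any $\psi_\mathbf{N}$ all endpoints of layers of $K$ and all points of $V$- and $\Lambda$-layers are accessible, so the accessible set of $\psi_\mathbf{N}(K)$ contains that of the standard embedding. For the reverse containment, I would invoke the fact recorded in Section 1 (and already used for the type-1 embeddings) that the interior of an interior layer of continuity of $K$ is inaccessible under \emph{every} planar embedding of $K$; hence $\psi_\mathbf{N}(K)$ has no accessible point beyond those of the standard embedding. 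The two accessible sets therefore coincide, and since this holds for every $\mathbf{N} \in \mathcal{S}_0$, the collection $\mathcal{E}_0$ is as required.

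I expect the only genuinely delicate point to be Step 1 — verifying that restricting to the even-$\geq 4$ alphabet does not lose cardinality — because the substantive content (inequivalence of the $\psi_\mathbf{N}$ and the accessibility bookkeeping) is already carried by Theorem \ref{inequivalent embeddings} and Proposition \ref{layers accessible}, so the corollary is essentially an assembly of previously established facts.
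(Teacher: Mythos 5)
Your proposal is correct and takes essentially the same route as the paper: obtain $\mathfrak{c}$-many mutually inequivalent sequences over the admissible alphabet and then invoke Theorem \ref{inequivalent embeddings} together with Proposition \ref{layers accessible}. Your explicit injection $a \mapsto 2a+2$ (where the paper instead restricts to the relevant equivalence classes and repeats the counting argument of Lemma \ref{uncountable sequences}) and your explicit verification of the reverse accessibility containment are only minor refinements of the same argument.
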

\begin{proof}
Let $\mathcal{I}$ be as in the proof of Lemma \ref{uncountable sequences}, and let $\mathcal{G} \subset \mathcal{I}$ be the set of all such equivalence classes of sequences of positive integers greater than or equal to 4.  By a proof similar to that of Lemma \ref{uncountable sequences}, the cardinality of $\mathcal{G}$ is $\mathfrak{c}$. Let $\mathfrak{G}$ be a set formed upon choosing one sequence from each equivalence class in $\mathcal{G}$.  Since by Theorem \ref{inequivalent embeddings} the embeddings $\psi_\mathbf{N}$ and $\psi_\mathbf{M}$ are inequivalent planar embeddings of $K$ whenever $\mathbf{N}$ and $\mathbf{M}$ are inequivalent sequences in $\mathfrak{G}$, it follows that $\Psi = \{\psi_\mathbf{N} \mid \mathbf{N} \in \mathfrak{G}\}$ is a collection of planar embeddings of $K$ whose cardinality is $\mathfrak{c}$.  Furthermore, the image of each member of $\Psi$ has the same set of accessible points as the standard embedding of $K$ by Proposition \ref{layers accessible}.
\end{proof}

\section{Closing Comments and Open Questions}

The question of which HDCC admit uncountably many mutually inequivalent planar embeddings is still an open problem. The previously described techniques for constructing $\mathfrak{c}$-many mutually inequivalent planar embeddings for $K$ may provide insight into how this more general problem can be solved. However, since any given HDCC, $X$, may possess a highly complex underlying structure on its generalized layers, producing planar embeddings of $X$ using similar such techniques can prove difficult in controlling the details of their constructions.

\

\noindent
\textbf{Question 1.} Can the techniques of producing $\mathfrak{c}$-many mutually inequivalent planar embeddings of $K$ as in Section 2 and Section 3 be generalized to all non-arc HDCC? If not for all non-arc HDCC, can they be generalized to those whose layer level is finite? 

\

We may also be able to provide a partial answer to the more general question above by exploring HDCC which yield a decomposition similar to that of $K = K_1 \cup K_2 \cup \cdots \cup E$ as in Section 3. In particular, suppose $X$ is an HDCC with left end layer $E_X$. Furthermore, suppose $X$ can be decomposed as $X = X_1 \cup X_2 \cup \cdots \cup E_X$, where $X_i \cap E_X = \emptyset$ for each $i \in \N$, and so that $X_i \cap X_j$ is a subcontinuum $C_i$ of $X$ if and only if $|i - j| \leq 1$, with $\diam(C_i) \to 0$ as $i \to \infty$. We thus pose the following question.

\

\noindent
\textbf{Question 2.} Can any non-arc HDCC possessing a decomposition as described in the previous paragraph be embedded in the plane in $\mathfrak{c}$-many mutually inequivalent ways? If so, can such embeddings be constructed according to sequences of positive even integers greater than or equal to 4 as was done for $K$ in section 3?

\

\begin{figure}
\centering
\includegraphics[width=8cm]{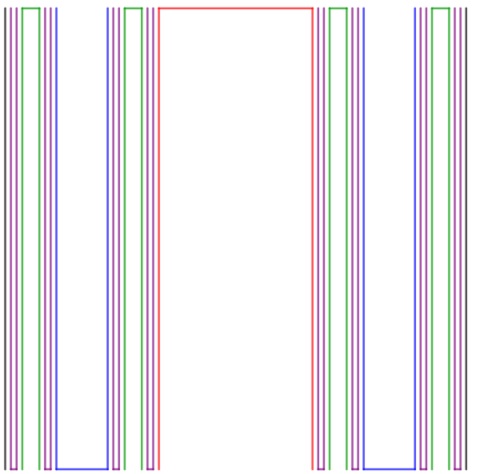}
\caption{The Cantor organ, $C$, represented by the first four iterations of its construction including its end layers. The layer level of $C$ is the same as that of $K$. In fact, $C$ can be constructed from $K$ by "blowing up" each individual vertex of $V$ and $\Lambda$ layers of $K$ into a horizontal arc. Likewise, $K$ can be constructed from $C$ by collapsing each individual horizontal arc in $C$ to a point.}
\label{fig:organ}
\end{figure}

There are many simple examples of HDCC which match the decomposition criteria mentioned in the paragraph before the previous question. The simplest such non-arc candidates are those possessing a subcontinuum which is the compactification of a ray with an arc. However, as we stated in section 1, and as was mentioned in \cite{Anusic2}, it is likely that such HDCC can be embedded in the plane in uncountably many mutually inequivalent ways. Another example of a non-arc HDCC other than $K$ matching the aforementioned decomposition criteria above is the continuum $C$ depicted in Figure \ref{fig:organ} known as the \textit{Cantor organ}. $C$ was given as an example of an irreducible space by Kuratowski in Chapter V, $\S 48$ of \cite{Kuratowski}. Though the Cantor set is invoked in its construction, Kuratowski did not refer to $C$ as the Cantor organ. However, one can find that Janusz J. Charatonik, Pawel Krupski, and Pavel Pyrih have given $C$ this name in \cite{Charatonik}. Regardless of the nomenclature, one may naturally propose a way to construct $\mathfrak{c}$-many mutually inequivalent planar embeddings of $C$ similar to the schema embeddings of $K$ in Section 3.

\

Recall the statements of Corollary \ref{like layers} and Lemma \ref{homeos of K}. To summarize, layers of $K$ of a given type are mapped to layers of the same type under a homeomorphism of $K$ onto itself, but it is not necessary that any layer is mapped onto itself.

\

\noindent
\textbf{Question 3.} Is there a planar embedding $\varphi$ of $K$ such that, for every layer $L$ of $K$ and every homeomorphism $h$ of the plane onto itself mapping $\varphi(K)$ onto itself, $h(\varphi(L)) = \varphi(L)$? If so, what other HDCC have this property?

\

It is possible that the question above can be answered for $K$ by inserting schema embeddings of subcontinuum copies of $K$ "all over $K$." However, the details of providing such an embedding of $K$ are yet to be developed. It may also be possible that if such an embedding can be constructed, there exist $\mathfrak{c}$-many mutually inequivalent such embeddings.

\end{document}